\documentclass[10pt]{amsart}
\usepackage[matrix,arrow]{xy}
\usepackage{amssymb}
\usepackage{amsmath}
\usepackage{amsfonts}
\usepackage{epsfig}
\usepackage{color}
\usepackage{epstopdf}
\usepackage{graphicx}

\usepackage{amsthm}
\usepackage{enumerate}
\usepackage[mathscr]{eucal}
\usepackage{verbatim}
\usepackage{bm}
\usepackage{tikz-cd}

\usepackage[bookmarks=true,hyperindex,pdftex,colorlinks,citecolor=blue,
linkcolor=blue,urlcolor=blue]{hyperref}
\usepackage{tikz}
\usetikzlibrary{matrix,arrows.meta}

%---------------------------------------------------------------------

\headheight=8pt   \topmargin=20pt \textheight=624pt
\textwidth=432pt \oddsidemargin=20pt \evensidemargin=20pt

%---------------------------------------------------------------------

\setcounter{MaxMatrixCols}{30} 
\theoremstyle{plain}
\newtheorem{theorem}{Theorem}[section]
\newtheorem{cor}[theorem]{Corollary}
\newtheorem{prop}[theorem]{Proposition}
\newtheorem{lemma}[theorem]{Lemma}

\theoremstyle{definition}

\newtheorem{example}[theorem]{Example}

\newtheorem{question}[theorem]{Question}
\newtheorem{rem}[theorem]{Remark}
\newtheorem{remark}[theorem]{Remark}
\newtheorem{definition}[theorem]{Definition}

%-----------------------------------------------------------

\newcommand{\R}{\mathbb{R}}

\DeclareMathOperator{\re}{Re}

\renewcommand{\subset}{\subseteq}

%--------------------------------------------------------

\title[M-ideals of compact operators and Norm attaining operators]
{M-ideals of compact operators and Norm attaining operators}

\author[M.~Han]{Manwook Han}
\address[M.~Han]{Department of Mathematics, Chungbuk National University, Cheongju, Chungbuk 28644, Republic of Korea}
\email{\texttt{mwhan0828@gmail.com}}

\author[S.~K.~Kim]{Sun Kwang Kim}
\address[S.~K.~Kim]{Department of Mathematics, Chungbuk National University, Cheongju, Chungbuk 28644, Republic of Korea\newline
	\href{http://orcid.org/0000-0002-9402-2002}{ORCID: \texttt{0000-0002-9402-2002}  }}
\email{\texttt{skk@chungbuk.ac.kr}}

\thanks{}

\keywords{M-ideal, Norm attaining operator, Weak maximizing property, Asymptotic uniform smoothness, Asymptotic uniform convexity}
\subjclass[2010]{Primary: 46B04; Secondary: 46B20, 46B25}                  

%------------------------------------------------------------------

\begin{document}

\begin{abstract} We investigate M-ideals of compact operators and two distinct properties in norm-attaining operator theory related with M-ideals of compact operators called the weak maximizing property and the compact perturbation property. For Banach spaces $X$ and $Y$, it is previously known that if $\mathcal{K}(X,Y)$ is an M-ideal or $(X,Y)$ has the weak maximizing property, then $(X,Y)$ has the adjoint compact perturbation property. We see that their converses are not true, and the condition that $\mathcal{K}(X,Y)$ is an M-ideal does not imply the weak maximizing property, nor vice versa. Nevertheless, we see that all of these are closely related to property $(M)$, and as a consequence, we show that if $\mathcal{K}(\ell_p,Y)$ $(1<p<\infty)$ is an M-ideal, then $(\ell_p,Y)$ has the weak maximizing property. We also prove that $(\ell_1,\ell_1)$ does not have the adjoint compact perturbation property, and neither does $(\ell_1,Y)$ for an infinite dimensional Banach space  $Y$ without an isomorphic copy of $\ell_1$ if  $Y$ does not have the local diameter 2 property. As a consequence, we show that if $Y$ is an infinite dimensional Banach space such that $\mathcal{L}(\ell_1,Y)$ is an M-ideal, then it has the local diameter 2 property. Furthermore, we also studied various geometric properties of Banach spaces such as the Opial property with moduli of asymptotic uniform smoothness and uniform convexity.
\end{abstract}

\maketitle

\section{Introduction}\label{section1}
In their seminal work \cite{AE} of 1972, E. M. Alfsen and E. G. Effros considered distinguished subspaces of a Banach space called M-ideals to study the space by means of geometric and analytic properties of the unit ball of its dual space. Afterwards, a lot of papers regarding M-ideals have been published. Especially there had been many efforts to characterize those Banach space $X$ such that the subspace of compact operators $\mathcal{K}(X)$ is an M-ideal in the space of bounded linear operators $\mathcal{L}(X)$ (see \cite{CJ,HL,H1,H2,L1,L2,SW}), and many important concepts, like property $(M)$, had been introduced. Our goal here is to use these concepts to investigate the weak maximizing property and the compact perturbation property in norm-attaining operator theory.

Let us give necessary background materials to make the paper entirely accessible.  Let $X$ and $Y$ be Banach spaces over the  field $\mathbb{K} = \mathbb{R}$ or $\mathbb{C}$. We use common notations $S_X$, $B_X$ and $X^*$ for the unit sphere, the closed unit ball and the dual space of $X$ respectively. We denote by $\mathcal{L}(X,Y)$ the Banach space of all bounded linear operators from $X$ into $Y$, and by $\mathcal{K}(X,Y)$ the subspace of compact operators. If $X=Y$, then we write simply $\mathcal{K}(X)$ and $\mathcal{L}(X)$ instead of $\mathcal{K}(X,Y)$ and $\mathcal{L}(X,Y)$. Specially, we write  $id_X\in \mathcal{L}(X)$ for the identity operator. We say that an operator $T\in \mathcal{L}(X,Y)$ \textbf{attains its norm} if there exists $x\in S_X$ such that $\|Tx\|=\|T\|=\sup_{z\in B_X} \|Tz\|$.

A closed subspace $J$ of $X$ is said to be an \textbf{L-summand} in $X$ if there is a projection $P$ on $X$ such that $PX=J$ and $\|x\|=\|Px\|+\|(id_X-P)x\|$ for every $x\in X$. $J$ is said to be an \textbf{M-ideal} in $X$ if the annihilator $J^{\perp}$ of $J$ is an L-summand in $X^*$. E. M. Alfsen and E. G. Effros introduced this notion in \cite{AE}, and they found that M-ideals in the self-adjoint part of a $C^*$-algebra are the self adjoint parts of the closed two-sided ideals. Afterwards, it is proved that M-ideals in a $C^*$-algebra are exactly the closed two-sided ideals (see \cite{SW}), and $\mathcal{K}(\ell_p)$ is an M-ideal in $\mathcal{L}(\ell_p)$ if and only if $1<p<\infty$ (see \cite{H1,SW}). The later result is strengthened by C. Cho and W. B. Johnson in \cite{CJ} as that whenever a subspace $X$ of $\ell_p$ ($1<p<\infty$) has the compact approximation property $\mathcal{K}(X)$ is an M-ideal in $\mathcal{L}(X)$. It is worth to note that $\mathcal{K}(X)$ is an M-ideal in $\mathcal{L}(X)$, then $X$ has the metric compact approximation property (in short MCAP) in general. For more details on M-ideal and its history, we refer the readers to a classic monograph \cite{HWW}. 

Among many properties involved in M-ideal theory, we focus on a property which we shall call the \textbf{adjoint compact perturbation property} (in short ACPP) and the \textbf{weak maximizing property} (in short WMP) for a pair $(X,Y)$ of Banach spaces $X$ and $Y$. We say that $(X,Y)$ has the ACPP if every operator $T\in\mathcal{L}(X,Y)$ whose adjoint $T^*$ does not attain its norm satisfies $\lVert T\rVert_e=\lVert T\rVert$ where $\lVert T\rVert_e=\inf_{K\in\mathcal{K}(X,Y)}\lVert T-K\rVert$. The ACPP is considered by W. Werner in \cite{WWerner} without its name, and it is proved that if $\mathcal{K}(X,Y)$ is an $M$-ideal in $\mathcal{L}(X,Y)$, then $(X,Y)$ has the ACPP in the same paper. Recently, R.M. Aron, D. Garc\'ia, D. Pellegrino and E.V. Teixeira introduced the WMP (see \cite{AGPT}). For an operator $T\in \mathcal{L}(X,Y)$, we say that a sequence $(x_n)\subset B_X$ is a \textbf{maximizing sequence} of $T$ if $\lim_n\lVert Tx_n\rVert=\lVert T\rVert$, and that $(X,Y)$ has the WMP if every $T\in \mathcal{L}(X,Y)$ having a non-weakly null maximizing sequence attains its norm. One of observations in \cite{AGPT} is that whenever $(X,Y)$ has the WMP if operators $T\in\mathcal{L}(X,Y)$ and $K\in\mathcal{K}(X,Y)$ satisfy $\lVert T+K\rVert>\lVert T\rVert$, then  $T+K$ attains its norm. Afterwards, the later property is named as the \textbf{compact perturbation property} (in short CPP) in \cite{GMA}. Indeed, the WMP implies the CPP for a pair of Banach spaces. 

It is clear that the CPP of a pair $(X,Y)$ is equivalent to a property that every $T\in\mathcal{L}(X,Y)$ which does not attain its norm satisfies $\lVert T\rVert_e=\lVert T\rVert$. This is why we called the ACPP that way. It is worth to comment that if $(X,Y)$ has the CPP, then $X$ is reflexive (see \cite{AGPT}). Since, whenever $X$ is reflexive, $T\in\mathcal{L}(X,Y)$ attains its norm if and only if $T^*$ attains its norm, we see that the CPP implies the ACPP.

\begin{center}
\begin{tikzcd}
 (X,Y) \text{~has the WMP} \arrow[rr, Rightarrow]     &    & (X,Y) \text{~has the CPP} \arrow[dd, Rightarrow, bend right]            \\
                                &    &                                                    \\
\mathcal{K}(X,Y) \text{~is an M-ideal} \arrow[rr, Rightarrow] &    & (X,Y) \text{~has the ACPP} \arrow[uu, "\text{if }X\text{ is Reflexive}"', bend right]
\end{tikzcd}
\end{center}

It is quite natural to ask whether or not the converse of each implication holds, and some of them are already known. First of all, note that $\mathcal{K}(Z,c_0)$ is an M-ideal in $\mathcal{L}(Z,c_0)$ for arbitrary Banach space $Z$ (see \cite[Example 4.1, Ch VI.4]{HWW}). Hence, if a Banach space $Z$ is not reflexive, then $(Z,c_0)$ has the ACPP but it does not have the CPP. Second, it is known that $(\mathbb{R}\oplus_\infty \ell_2,c_0)$ does not have the WMP (see \cite[Remark 3.2]{GP}), and this shows that neither the condition that $(X,Y)$ has the CPP  nor the one that $\mathcal{K}(X,Y)$ is an M-ideal implies the WMP. Therefore, we focus on the rest in the present paper, and we prove that none of them are true in general.

Nevertheless, we shall see that all the properties are closely related. In section \ref{section2}, we show that $(\ell_1,\ell_1)$ does not have the ACPP which is a strengthening of the well known fact that $\mathcal{K}(\ell_1)$ is not an M-ideal in $\mathcal{L}(\ell_1)$ (see \cite{L1} for instance). We also deduce that whenever $Y$ is an infinite dimensional Banach space without an isomorphic copy of $\ell_1$ if $(\ell_1,Y)$ has the ACPP, then $Y$ has the \textbf{local diametral $2$ property} (in short LD2P) that means every slice of $B_Y$ has diameter $2$. As a corollary, we deduce that if $Y$ is an infinite dimensional Banach space such that $\mathcal{K}(\ell_1,Y)$ is an M-ideal in $\mathcal{L}(\ell_1,Y)$, then $Y$ has the LD2P since those space $Y$ contains no isomorphic copy of $\ell_1$ (see \cite{LORW}). With the ACPP, we also generalize \cite[Theorem 2.12 (a)]{LORW} which states that if $Y$ is an infinite dimensional Banach space such that $\mathcal{K}(\ell_1,Y)$ is an M-ideal in $\mathcal{L}(\ell_1,Y)$, then every infinite dimensional subspace of a quotient of $Y$ is non-reflexive. Note that if $Y$ is reflexive, then it does not have the LD2P since it has the Radon-Nikod\'ym property.

In section \ref{section3}, we see that property $(M)$ is a commonality between M-ideals of compact operators and the WMP. Among many variations of property $(M)$,  we recall the one for contractive operators introduced by N. J. Kalton and D. Werner in \cite{KW}. For Banach spaces $X$ and $Y$, we say that a contraction $T\in \mathcal{L}(X,Y)$ has \textbf{property $\bf (M)$} if elements $x\in X$ and $y\in Y$ satisfy $\lVert y\rVert \leq \lVert x\rVert$, then
 $$\limsup_n\lVert y+Tx_n\rVert \leq \limsup_n\lVert x+x_n\rVert$$
for every weakly null sequence $(x_n) \subset X$. We also say that a pair $(X,Y)$ has property $(M)$ if every contraction in $\mathcal{L}(X,Y)$ has property $(M)$. One of main results in \cite{KW} is that whenever there is a sequence of compact operators $(K_n)$ on a separable space Banach space $X$ such that $K_n$ and $K^*_n$ converge strongly to $id_X$ and $id_{X^*}$ respectively and satisfies $\lim_n \|id_X-2K_n\|=1$, $(X,Y)$ has property $(M)$ if and only if $\mathcal{K}(X,Y)$ is an M-ideal in $\mathcal{L}(X,Y)$. As an application of this result, the authors found many examples of pairs $(X,Y)$, like $(\ell_p,L_p[0,1])$ for every $p\in [2,\infty)$, such that $\mathcal{K}(X,Y)$ is an M-ideal in $\mathcal{L}(X,Y)$. We observe in section \ref{section3} that if $\mathcal{K}(\ell_p,Y)$ is an M-ideal in $\mathcal{L}(\ell_p,Y)$ for $p\in (1,\infty)$, then $(\ell_p,Y)$ has the WMP. In order to prove this, we introduce property strict $(M)$ for operators and a pair $(X,Y)$ which is motivated by property $(M)$. The main result is that if $(X,Y)$ has property strict $(M)$, then $(X,Y)$ has the WMP, and this result not only covers known examples having the WMP but also produces new ones. We also  show that the converse of this result holds when $Y$ is $c_0$. 

In section \ref{section4}, we investigate the \textbf{weak* to weak* maximizing property} (in short weak*-to-weak*MP) (see \cite{GP}) for a pair $(Y^*, X^*)$ that means if  an adjoint operator $T^*$ of $T\in \mathcal{L}(X,Y)$ has a non-weakly* null maximizing sequence $(y^*_n)$ in $B_{Y^*}$, then $T^*$ attains its norm. Note  that if $(Y^*,X^*)$ has the weak*-to-weak*MP, then $(X,Y)$ has the ACPP (see \cite[Remark 1.3]{GP}). In order to do it, we consider properties $(sM^*)$, strict $(sM^*)$ and $(sO^*)$ which are the net versions on dual spaces with respect to the weak* topology of properties $(M)$, strict $(M)$ and $(O)$ that are considered in section \ref{section3}. We prove that if $(X,Y)$ has property strict $(sM^*)$, then $(Y^*,X^*)$ has the weak*-to-weak*MP, and if $(X,Y)$ has the WMP, then a pair $(Y^*,X^*)$ has the weak*-to-weak*MP whenever $(X,Y)$ has property $(sO^*)$. We comment that a question raised in \cite{GP} whether  if $(X,Y)$ has the WMP, then a pair $(Y^*,X^*)$ has the weak*-to-weak*MP or not is still not known. We also find new examples of pairs having the weak*-to-weak*MP.

In section \ref{section5}, we investigate a Banach space $X$ such that $\bar{\delta}_X=\bar{\rho}_X$ where $\bar{\delta}_X$ and $\bar{\rho}_X$ are the moduli of asymptotic uniform convexity and smoothness, respectively. In order to do this, we introduce net-stable Banach spaces and weak-net-stable Banach spaces which are  motivated by stable Banach spaces introduced by J.L. Krivine and B. Maurey in \cite{KM}, and we prove that if $X$ is a weak-net-stable Banach space with property $(sM)$, the net version of property $(M)$, then $\bar{\delta}_X=\bar{\rho}_X$. The converse holds when $\bar{\rho}_X$ induces a symmetric norm on $\R^2$. Using this, we revisit the famous characterization of $X$ such that $\mathcal{K}(X\oplus_p X)$ ($1<p<\infty$) is an M-ideal given by N. J. Kalton in \cite{K1} and E. Oja in \cite{Oja1}. We also characterize the net version of the Opial property in terms of $\bar{\delta}_X$. 

\section{The adjoint compact perturbation property}\label{section2}

We recall a result of W. Werner which serves as the main motivation.
\begin{theorem}\cite{WWerner}\label{ACPPMid} If $\mathcal{K}(X,Y)$ is an $M$-ideal in $\mathcal{L}(X,Y)$, then  $(X,Y)$ has the ACPP.
\end{theorem}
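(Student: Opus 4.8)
The plan is to prove the contrapositive: assuming $\|T\|_e<\|T\|$ (and normalizing $\|T\|=1$), I will produce $y^*\in S_{Y^*}$ and $x^{**}\in B_{X^{**}}$ with $\langle T^{**}x^{**},y^*\rangle=1$, which forces $\|T^*y^*\|=\|T^*\|=1$ and hence that $T^*$ attains its norm. First I would use the M-ideal hypothesis to obtain the L-projection $P$ on $\mathcal{L}(X,Y)^*$ with range $\mathcal{K}(X,Y)^{\perp}$; writing $V=\ker P$ gives an L-decomposition $\mathcal{L}(X,Y)^*=V\oplus_1\mathcal{K}(X,Y)^{\perp}$, and the restriction map identifies $V$ isometrically with $\mathcal{K}(X,Y)^*$. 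By weak*-compactness of $B_{\mathcal{L}(X,Y)^*}$, pick a norming functional $\psi_0$ with $\langle\psi_0,T\rangle=1$ and decompose $\psi_0=\phi_0+v_0$, $\phi_0=P\psi_0\in\mathcal{K}(X,Y)^{\perp}$, $v_0\in V$, so that $\|\phi_0\|+\|v_0\|\le1$. Since $\|T\|_e=\dist(T,\mathcal{K}(X,Y))$ is the quotient norm and $(\mathcal{L}(X,Y)/\mathcal{K}(X,Y))^*=\mathcal{K}(X,Y)^{\perp}$ isometrically, one has $|\langle\phi_0,T\rangle|\le\|\phi_0\|\,\|T\|_e$. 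Then $1=\langle\phi_0,T\rangle+\langle v_0,T\rangle\le\|\phi_0\|\,\|T\|_e+(1-\|\phi_0\|)$ rearranges to $\|\phi_0\|(1-\|T\|_e)\le0$, and as $\|T\|_e<1$ this forces $\phi_0=0$. Thus $\psi_0=v_0\in V$ with $\|v_0\|=1$ and $\langle v_0,T\rangle=1$.

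The next step passes to the bidual. Dualizing the L-decomposition, $\mathcal{K}(X,Y)^{**}$ is an M-summand in $\mathcal{L}(X,Y)^{**}$; let $\Pi$ be the norm-one M-projection onto it. For $\mu\in\mathcal{K}(X,Y)^*$ write $s(\mu)\in V$ for its unique lift under $V\cong\mathcal{K}(X,Y)^*$; a direct pairing computation using the two matched decompositions gives $\langle\Pi T,\mu\rangle=\langle s(\mu),T\rangle$ for every $T$. With $\mu_0:=v_0|_{\mathcal{K}(X,Y)}$ one has $s(\mu_0)=v_0$, hence $\langle\Pi T,\mu_0\rangle=1$, while $\|\Pi T\|\le\|T\|=1$; so $\|\Pi T\|=1$ and $\mu_0$ maximizes the weak*-continuous affine functional $\mu\mapsto\operatorname{Re}\langle\Pi T,\mu\rangle$ over the weak*-compact convex set $B_{\mathcal{K}(X,Y)^*}$. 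Its maximizing face is nonempty and weak*-compact, so by Krein--Milman it contains an extreme point $\mu_1$ of $B_{\mathcal{K}(X,Y)^*}$. Since the M-ideal hypothesis yields the MCAP, I may write $\mathcal{K}(X,Y)=X^*\otimes_\varepsilon Y$ and invoke the Ruess--Stegall description of extreme points to identify $\mu_1$ with a rank-one functional $x^{**}\otimes y^*$, where $x^{**}\in B_{X^{**}}$ and $y^*\in S_{Y^*}$.

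Finally I would evaluate $\langle\Pi T,\mu_1\rangle=\langle s(x^{**}\otimes y^*),T\rangle$. On compact operators $s(x^{**}\otimes y^*)$ agrees with the elementary functional $K\mapsto\langle K^{**}x^{**},y^*\rangle$, and the key claim is that this elementary functional already lies in the complement $V$, so that $\langle s(x^{**}\otimes y^*),T\rangle=\langle T^{**}x^{**},y^*\rangle$ for all $T$. Granting this, $\langle T^{**}x^{**},y^*\rangle=1$ with $x^{**}\in B_{X^{**}}$ and $y^*\in S_{Y^*}$, whence $\|T^*y^*\|\ge\langle T^{**}x^{**},y^*\rangle=1=\|T^*\|$, so $T^*$ attains its norm at $y^*$ and the contrapositive is complete.

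The hard part will be exactly this identification: one must show that the elementary functionals $K\mapsto\langle K^{**}x^{**},y^*\rangle$ are annihilated by the singular L-projection $P$ (equivalently, belong to $V$), so that norming $T$ by $\mu_1$ upgrades from a statement about the compact part $\Pi T\in\mathcal{K}(X,Y)^{**}$ to the genuine evaluation $\langle T^{**}x^{**},y^*\rangle$. This is where the fine structure of $\mathcal{L}(X,Y)^*$ for M-ideals of compact operators must be used, and the subtlety is sharpest when the extreme point $x^{**}$ lies in $X^{**}\setminus X$; resolving it is the technical heart of the argument, while the duality bookkeeping in the first two steps is routine.
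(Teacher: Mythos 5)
The paper offers no proof of this statement---it is quoted directly from W.~Werner's paper---so there is nothing internal to compare against; your argument is essentially the classical one used there and in Harmand--Werner--Werner (Ch.~VI). The duality bookkeeping in your first two steps is correct: once $\lVert T\rVert_e<\lVert T\rVert$, the L-decomposition forces a norming functional of $T$ to lie in $V\cong\mathcal{K}(X,Y)^*$, and the passage to an extreme point of $B_{\mathcal{K}(X,Y)^*}$ via Krein--Milman is sound. Two points need repair. First, your appeal to the MCAP to write $\mathcal{K}(X,Y)=X^*\otimes_\varepsilon Y$ is wrong: the metric \emph{compact} approximation property does not imply that compact operators are limits of finite-rank ones, so this identification can fail. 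Fortunately it is also unnecessary: the Ruess--Stegall theorem applies to $\mathcal{K}(X,Y)$ itself (via the isometry $\mathcal{K}(X,Y)\cong\mathcal{K}_{w^*}(X^{**},Y)$, $T\mapsto T^{**}$) for arbitrary Banach spaces, with no approximation hypothesis, and yields exactly the extreme points $x^{**}\otimes y^*$ with $x^{**}\in \operatorname{ext}B_{X^{**}}$, $y^*\in\operatorname{ext}B_{Y^*}$. Second, the ``key claim'' you leave open---that the elementary functional $\Phi\colon S\mapsto\langle S^{**}x^{**},y^*\rangle$ on $\mathcal{L}(X,Y)$ lies in $V$---is a genuine gap as written, but it is closed by a standard fact rather than by new fine structure: an M-ideal is an HB-subspace, i.e.\ every functional on $\mathcal{K}(X,Y)$ has a \emph{unique} norm-preserving extension to $\mathcal{L}(X,Y)$, and that extension is precisely its lift into $V$. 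Since $\lVert x^{**}\rVert=\lVert y^*\rVert=1$, testing against rank-one operators $x^*\otimes y$ gives $\lVert\Phi|_{\mathcal{K}(X,Y)}\rVert=1=\lVert\Phi\rVert_{\mathcal{L}(X,Y)^*}$, so $\Phi$ is a (hence the) norm-preserving extension of $\mu_1$ and $\Phi=s(\mu_1)\in V$; this works uniformly whether or not $x^{**}\in X$. With these two corrections your argument closes as you intend: $1=\langle\Pi T,\mu_1\rangle=\langle x^{**},T^*y^*\rangle\le\lVert T^*y^*\rVert\le 1$, so $T^*$ attains its norm at $y^*$.
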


The main purpose of the present section is to observe some properties of a pair $(X,Y)$ having the ACPP which can be observed in a pair $(X,Y)$ with a condition that $\mathcal{K}(X,Y)$ is an $M$-ideal in $\mathcal{L}(X,Y)$. We first see that the converse of theorem \ref{ACPPMid} does not hold in general using I-polyhedrality.

As convex polytopes in a finite dimensional space had been considered in many areas, there had been many attempts to consider infinite-dimensional convex polytopes. Among various notions of polyhedrality (see \cite{VL,GA}), we recall \textbf{I-polyhedrality}. We say that a Banach space $X$ is I-polyhedral if it satisfies
$$(Ext B_{X^*} )' \subset \{0\}$$
where $Ext  B_{X^*}$ is the set of extreme points of a set $B_{X^*}$ and $(Ext B_{X^*})'$ is the set of weak* cluster points of $Ext B_{X^*}$.
It is worth to note that $X$ is I-polyhedral if and only if $X$ is isometric to a subspace of $c_0 (\Gamma)$ where $\text{card~} \Gamma = \text{dens~} X$ and $\text{dens~} X$ is the smallest cardinality of a dense subset of $X$ (see  \cite[Theorem 1.2.]{VL}).

\begin{theorem}
If a Banach space $Y$ is I-polyhedral, then a pair $(X,Y)$ has the ACPP for every Banach space $X$.
\begin{proof}
Assume that the adjoint $T^*$ of an operator $T \in\mathcal{L}(X,Y)$ does not attain its norm. Since it holds that 
$$\lVert T^* \rVert=\sup_{v^* \in Ext B_{Y^*}}{\lVert T^* v^* \rVert},$$
there is a sequence $(v_n ^*) \subset Ext B_{Y^*}$ such that $\left(\|T^* v_n^*\|\right)$ converges to $\|T^*\|$.

From the weak* compactness of $B_{Y^*}$, take a subnet $(v^*_{n_\alpha})$ of $(v^*_n)$ which converges to some $v\in B_X$ with respect to the weak* topology, and we see that $v=0$. Indeed, there exists $\alpha_0$ such that $v\notin \{v^*_{n_\alpha} : \alpha\geq \alpha_0\}$ since $T^*$ does not attain its norm. This gives us that $v$ is a weak* cluster point of $ Ext B_{Y^*}$ that means $v=0$. Hence, we see that $$\|T\|=\lim_\alpha \|(T+K)^*v^*_{n_\alpha}\|\leq \|T+K\|$$ for every $K\in \mathcal{K}(X,Y)$. Therefore, $\|T\|=\|T\|_e$.
\end{proof}
\end{theorem}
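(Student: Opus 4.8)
The plan is to establish the inequality $\|T\|_e \ge \|T\|$ for every $T \in \mathcal{L}(X,Y)$ whose adjoint $T^*$ does not attain its norm; since $\|T\|_e \le \|T\|$ holds trivially by taking $K=0$, this yields the equality $\|T\|_e = \|T\|$ defining the ACPP. Concretely, I would manufacture, for each $K \in \mathcal{K}(X,Y)$, a weak* null net of \emph{extreme} functionals along which $(T+K)^*$ still nearly realizes the value $\|T\|$, so that no compact perturbation can lower the norm below $\|T\|$. The argument uses nothing about $X$, which is why an arbitrary $X$ is allowed.

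First I would reduce to extreme points. Writing $f(v^*) = \|T^* v^*\| = \sup_{x \in B_X} |v^*(Tx)|$, each sublevel set $\{v^* \in B_{Y^*} : f(v^*) \le r\}$ is an intersection of weak* closed half-spaces, hence weak* closed and convex. Thus if $f$ were bounded by some $s < \|T^*\|$ on $\operatorname{Ext} B_{Y^*}$, then Krein--Milman ($B_{Y^*} = \overline{\operatorname{co}}^{\,w^*}\operatorname{Ext} B_{Y^*}$) would force $f \le s$ on all of $B_{Y^*}$, a contradiction. Hence $\sup_{v^* \in \operatorname{Ext} B_{Y^*}} \|T^* v^*\| = \|T^*\| = \|T\|$, so I may choose $(v_n^*) \subset \operatorname{Ext} B_{Y^*}$ with $\|T^* v_n^*\| \to \|T\|$, and by weak* compactness of $B_{Y^*}$ pass to a subnet $(v_{n_\alpha}^*)$ converging weak* to some $v \in B_{Y^*}$.

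The crux is to show $v = 0$, and this is where non-attainment and I-polyhedrality combine; I expect this to be the main obstacle. If the subnet were cofinally equal to $v$, then $v$ would be a point of $B_{Y^*}$ with $\|T^* v\| = \lim_\alpha \|T^* v_{n_\alpha}^*\| = \|T\|$, i.e.\ $T^*$ would attain its norm at $v$, contrary to hypothesis. Therefore the subnet is eventually different from $v$, so every weak* neighborhood of $v$ contains points of $\operatorname{Ext} B_{Y^*} \setminus \{v\}$; that is, $v$ is a weak* cluster point of $\operatorname{Ext} B_{Y^*}$. The I-polyhedrality hypothesis $(\operatorname{Ext} B_{Y^*})' \subset \{0\}$ then forces $v = 0$. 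The delicate point is precisely to certify that $v$ is a genuine accumulation point of the extreme points rather than an isolated weak* limit, and this certification hinges entirely on non-attainment.

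Finally I would perturb. Fix $K \in \mathcal{K}(X,Y)$; since $K$ is compact, $K(B_X)$ is relatively norm compact, so $K^*$ is weak*-to-norm continuous on bounded sets, and from $v_{n_\alpha}^* \to 0$ weak* I obtain $\|K^* v_{n_\alpha}^*\| \to 0$. Combined with $\|T^* v_{n_\alpha}^*\| \to \|T\|$, the triangle inequality gives $\|(T+K)^* v_{n_\alpha}^*\| \to \|T\|$, and since $\|(T+K)^* v_{n_\alpha}^*\| \le \|T+K\|$ for all $\alpha$ I conclude $\|T\| \le \|T+K\|$. As $K$ ranges over all of $\mathcal{K}(X,Y)$, this yields $\|T\|_e \ge \|T\|$, completing the argument.
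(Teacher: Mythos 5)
Your proposal is correct and follows essentially the same route as the paper's proof: reduce to extreme functionals (the paper simply asserts $\lVert T^*\rVert=\sup_{v^*\in \operatorname{Ext} B_{Y^*}}\lVert T^*v^*\rVert$ where you fill in the Krein--Milman justification), extract a weak* convergent subnet, use non-attainment to certify that the limit $v$ is a genuine weak* cluster point of $\operatorname{Ext} B_{Y^*}$ and hence $v=0$ by I-polyhedrality, and finally use weak*-to-norm continuity of $K^*$ on bounded sets to get $\lVert T\rVert=\lim_\alpha\lVert (T+K)^*v^*_{n_\alpha}\rVert\le\lVert T+K\rVert$. The only differences are expository: you make explicit two steps the paper leaves implicit.
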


Since there is a subspace of $c_0$ without the MCAP (see \cite{LT}), we have a Banach space $Z$ which is I-polyhedral and $\mathcal{K}(Z,Z)$ is not an M-ideal in $\mathcal{L}(Z,Z)$ (see \cite[Ch. VI]{HWW}). Hence, this shows that the converse of theorem \ref{ACPPMid} is not true. 

In \cite{VL}, 8 types of polyhedrality  including I-polyhedrality are considered, and it is true that I-polyhedrality implies the others. Among them, we recall \textbf{II-polyhedrality}. We say that $X$ is II-polyhedral  if there exists $0<r<1$ such that 
$$(Ext B_{X^*} )' \subset r B_{X^*}.$$
Since this implies the other types of polyhedrality except I-polyhedrality, it is quite natural to ask whether or not a pair $(X,Y)$ has the ACPP for every Banach space $X$ whenever $Y$ is a II-polyhedral Banach space. We give a negative answer in example \ref{exfailACPP}.

 We now focus on results related to an M-ideal of compact operators. It is well known that $\mathcal{K}(\ell_1)$ is not an M-ideal in $\mathcal{L}(\ell_1)$ (see \cite{L1,SW}). We first strengthen this result with the ACPP.

\begin{prop}\label{thm:ell1}
The pair $(\ell_1,\ell_1)$ does not have the ACPP.
\end{prop}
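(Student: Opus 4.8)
The plan is to exhibit a single operator $T\in\mathcal{L}(\ell_1,\ell_1)$ whose adjoint fails to attain its norm while $\|T\|_e<\|T\|$; this directly violates the ACPP. Throughout I will use three elementary facts about operators on $\ell_1$: the norm is computed columnwise, $\|T\|=\sup_n\|Te_n\|_1$; the adjoint acts by $T^*\phi=\big(\langle\phi,Te_n\rangle\big)_n$, so that $\|T^*\phi\|_\infty=\sup_n|\langle\phi,Te_n\rangle|$ and $\|T^*\|=\|T\|$; and $\|T\|_e\le\|T-K\|$ for every compact $K$. The operator I would take is $Te_n=f-e_{n+1}$, where $f=\sum_{m\ge1}2^{-m}e_m\in S_{\ell_1}$; equivalently $T=u\otimes f-R$, with $u=(1,1,\dots)\in\ell_\infty$, where $(u\otimes f)(x)=\big(\sum_n x_n\big)f$ is rank one (hence compact) and $R$ is the right shift $Re_n=e_{n+1}$.

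First I would compute the two norms. Since $f$ has positive entries and $f-e_{n+1}$ has $(n+1)$-st entry $2^{-(n+1)}-1<0$, a direct computation gives $\|Te_n\|_1=\|f-e_{n+1}\|_1=(1-2^{-(n+1)})+(1-2^{-(n+1)})=2-2^{-n}$, so that $\|T\|=2$ and this supremum is attained at no column. For the essential norm, subtracting the compact part yields $\|T\|_e\le\|T-u\otimes f\|=\|R\|=1<2=\|T\|$, which already settles the required strict inequality $\|T\|_e<\|T\|$.

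The heart of the argument is to show that $T^*$ does not attain its norm, that is, $\sup_n|\langle\phi,Te_n\rangle|<2$ for every $\phi\in B_{\ell_\infty}$. Writing $s=\langle\phi,f\rangle=\sum_m2^{-m}\phi_m\in[-1,1]$, one has $\langle\phi,Te_n\rangle=s-\phi_{n+1}$, whence $\sup_n|\langle\phi,Te_n\rangle|\le|s|+1$. If $|s|<1$ this bound is already $<2$. If $|s|=1$, then necessarily $\phi_m$ is constant of modulus $1$, i.e. $\phi=\pm u$, in which case $s-\phi_{n+1}\equiv0$. In every case $\|T^*\phi\|_\infty<2=\|T^*\|$, so $T^*$ is not norm-attaining; combined with the previous paragraph this shows $(\ell_1,\ell_1)$ lacks the ACPP.

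The delicate point — and the reason a naive construction does not work — is precisely this uniform estimate over all $\phi\in B_{\ell_\infty}$, and I expect it to require the most care. Because $\ell_1$ has the Schur property, any ``moving mass'' carried on disjoint supports, or any sequence of columns converging in norm, admits a single norming functional and thereby forces $T^*$ to attain its norm; so the non-compact tail must be entangled with a fixed, fully supported ``bulk'' whose sign conflicts with it. That is the role of $f$: gaining the extra unit from the tail $-e_{n+1}$ at a given column requires setting $\phi_{n+1}=-1$, which costs $f$-mass at that coordinate, and doing so at infinitely many columns strictly lowers $\langle\phi,f\rangle$ by an amount that cannot be recovered. Making this cancellation mechanism rigorous and uniform in $\phi$ is the crux, and the clean reduction $\langle\phi,Te_n\rangle=s-\phi_{n+1}$ is what I would rely on to carry it out.
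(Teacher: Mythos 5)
Your proof is correct, and it takes a genuinely different route from the paper's. You build $T=u\otimes f-R$ (a rank-one bulk plus the right shift), so that $Te_n=f-e_{n+1}$, and all three verifications go through exactly as you state: the columnwise norm gives $\|Te_n\|_1=2-2^{-n}$, hence $\|T\|=2$; subtracting the compact rank-one part gives $\|T\|_e\le\|R\|=1$; and the identity $\langle\phi,Te_n\rangle=s-\phi_{n+1}$ with $s=\langle\phi,f\rangle$ reduces non-attainment of $T^*$ to the clean dichotomy $|s|<1$ (where $\sup_n|s-\phi_{n+1}|\le|s|+1<2$) versus $|s|=1$ (where the strict convexity of the weights $2^{-m}$ forces $\phi=\epsilon u$ and hence $T^*\phi=0$). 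The paper instead constructs an explicit infinite matrix whose rows have $\ell_1$-mass $1-2^{-n}$, with sign flips $\gamma_n$ and transpositions $\phi_n=(1,n)$ inserted so that no $\delta\in S_{\ell_\infty}$ can simultaneously norm infinitely many rows; the resulting non-attainment argument is an iterative contradiction that is considerably more delicate than your two-case analysis, and its essential-norm bound ($\|T\|_e\le 3/4<1$) likewise comes from deleting one rank-one summand. Your construction isolates the mechanism more transparently — the fixed fully supported vector $f$ plays the role of the paper's row bulk, and the shift plays the role of its moving sign — and yields a cleaner quantitative gap $\|T\|_e\le 1<2=\|T\|$. The only cosmetic caveat is that over $\mathbb{C}$ your case $|s|=1$ forces $\phi=\epsilon u$ for a unimodular scalar $\epsilon$ rather than $\pm u$, but the conclusion $s-\phi_{n+1}\equiv 0$ is unchanged.
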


\begin{proof}
Define a permutation $\phi_n$ on $\mathbb{N}$ and a function $\gamma_n :\mathbb{N} \to \{1,-1\}$ for each $n\in \mathbb{N}$ by
$$\phi_n=(1,n) \text{~and~}\gamma_n(m)=
\begin{cases}
1  &\mbox{ if } m\neq n \\
-1 &\mbox{ if } m=n 
\end{cases},$$
respectively. Consider an infinite matrix $(\alpha_{n,m})_{n,m}$ given by 
$$(\alpha_{n,m})_{n,m}=\left(\gamma_n(m)\left(1-\frac{1}{2^n}\right)\frac{1}{2^{\phi_n(m)}}\right)_{n,m}.$$

It is clear that $w_m^*=(\alpha_{n,m})_n$ belongs to $B_{\ell_\infty}$  for every $m\in\mathbb{N}$, and that
$$\sum_{m=1}^\infty \lvert w_m^*(e_n)\rvert =1-\frac{1}{2^n}$$
 for every $n\in \mathbb{N}$ where $(e_n)$ is the canonical basis of $\ell_1$. Hence, an operator $T=\sum_{m=1}^\infty w_m^* \otimes e_m$ belongs to $S_{\mathcal{L}(\ell_1)}$.

We now claim that adjoint $T^*$ of $T$ does not attain its norm. Otherwise, it is obvious that $T^*$ attains its norm at some $\delta=(\delta_n)_n\in S_{\ell_\infty}$ which is a sequence of numbers of modulus $1$. Indeed, it holds that
$$1=\lVert T^*\delta\rVert=\sup_{n}\left\lvert\sum_{m=1}^\infty \delta_m w^*_m(e_n)\right\rvert=\sup_{n}\left(1-\frac{1}{2^{n}}\right)\left\lvert\sum_{m=1}^\infty \delta_m \gamma_n(m)\frac{1}{2^{\phi_n(m)}}\right\rvert.$$
Therefore, there exists $n_0\in\mathbb{N}$ such that
$$\left(1-\frac{1}{2^{n_0}}\right)\left\lvert\sum_{m=1}^\infty \delta_m\gamma_{n_0}(m)\frac{1}{2^{\phi_{n_0}(m)}}\right\lvert>\frac{7}{8}.$$
It is clear that $n_0>3$ and $\delta_2\neq \delta_{n_0}$, and these give that $\left|\delta_2\frac{1}{4}+\delta_{n_0}\frac{1}{2^{n_0}}\right|<\frac{1}{4}+\frac{1}{2^{n_0}}$. Hence, there is $n_1\in\mathbb{N}$ such that
$$\left(1-\frac{1}{2^{n_1}}\right)\left\lvert\sum_{m=1}^\infty \delta_m\gamma_{n_1}(m)\frac{1}{2^{\phi_{n_1}(m)}}\right\lvert>\alpha$$
where $\alpha=\max\left\{1-\frac{1}{2^{n_0}},\left|\delta_2\frac{1}{4}+\delta_{n_0}\frac{1}{2^{n_0}}\right|+\sum_{m\in \mathbb{N}\setminus\{2,n_0\}}\frac{1}{2^m}\right\}$.
Note that $n_1>n_0$. Thus, we have
\begin{align*}
\alpha & <\left(1-\frac{1}{2^{n_1}}\right)\left\lvert\sum_{m=1}^\infty \delta_m\gamma_{n_1}(m)\frac{1}{2^{\phi_{n_1}(m)}}\right\rvert\\
&\leq \left(1-\frac{1}{2^{n_1}}\right)\left|\left(\delta_2\frac{1}{4}+\delta_{n_0}\frac{1}{2^{n_0}}\right)+\sum_{m\in \mathbb{N}\setminus\{2,n_0\}} \delta_m\gamma_{n_1}(m)\frac{1}{2^{\phi_{n_1}(m)}}\right|\\
&\leq \left(1-\frac{1}{2^{n_1}}\right)\left[\left|\delta_2\frac{1}{4}+\delta_{n_0}\frac{1}{2^{n_0}}\right|+\sum_{m\in \mathbb{N}\setminus\{2,n_0\}}\frac{1}{2^m}\right] \\
&\leq \left(1-\frac{1}{2^{n_1}}\right) \alpha
\end{align*}
which leads to a contradiction. On the other hand, we have 
$$\|T\|_e\leq \lVert T- w_2^*\otimes e_2 \rVert= \sup_{k}\sum_{m\in \mathbb{N}\setminus\{2\}}\lvert w_m^*(e_k)\rvert\leq \frac{3}{4} <1= \lVert T \rVert$$ 
which finishes the proof.
\end{proof}

We recall \cite[Theorem 2.12]{LORW} that if $Y$ is an infinite dimensional Banach space such that $\mathcal{K}(\ell_1,Y)$ is an M-ideal in $\mathcal{L}(\ell_1,Y)$, then every infinite dimensional subspace of a quotient of $Y$ is non-reflexive. We deduce the same result with the ACPP.
 
\begin{theorem}\label{thm:finite}
For  an infinite dimensional Banach space $Y$ containing no isomorphic copy of $\ell_1$, if the pair $(\ell_1,Y)$ has the ACPP, then $Y$ has the LD2P.
\begin{proof} If $Y$ does not have the LD2P, then there is a functional $y_0 ^* \in S_{Y^*}$ and $\delta>0$ such that  
$$\text{diam~}\left[S(y_0 ^* , \delta)\right] < 2$$
where $S(y_0 ^* , \delta)=\{y\in B_Y ~:~ \re y_0^*(y)>1-\delta\}$. From the Bishop-Phelps theorem, we assume that $y_0^*(y)=1$ for some $y \in S_Y$.

 Let $\alpha=diam\left[S(y_0 ^* , \delta)\right] $ and $X=\ker(y_0 ^*)$. Since $X$ contains no isomorphic copy of $\ell_1$, by the Rosenthal's $\ell_1$ theorem, there is a normalized weakly null sequence $(x_n)\subset X$.

For $\theta>0$ such that $\frac{1}{1+\theta}>1-\frac{\delta}{2}$, there exists $\delta_i \in \mathbb{R}$ such that 
$$\lvert \delta_i \rvert < 1+\theta - \frac{(1+\theta)(2-\alpha)}{2} \text{~and~} \lVert y + \delta_i x_i \rVert = 1+\theta.$$

 Indeed, for  $u_i > 0$ and $v_i<0$ such that $\lVert y+u_i x_i \rVert =\lVert y+v_i x_i \rVert=1+\theta$, if both $\lvert u_i \rvert$ and $\lvert v_i \rvert$ are greater or equal to $ 1+\theta - \frac{(1+\theta)(2-\alpha)}{2}$, then we have
$$\left\lVert \frac{1}{1+\theta}(y+u_i x_i) - \frac{1}{1+\theta}(y+v_i x_i) \right\rVert = \frac{\lvert u_i \rvert +\lvert v_i \rvert}{1+\theta} \geq \alpha.$$
Since $\frac{1}{1+\theta}(y+u_i x_i)$ and $\frac{1}{1+\theta}(y+v_i x_i)$ are elements of $S(y_0 ^*,\delta)$, this leads to  a contradiction.

Define $T$ and $U$ in $\mathcal{L}(\ell_1,Y)$ by $Te_i = y+\delta_i \left( 1- \frac{1}{i} \right) x_i$ and $Ue_i=y$ where $(e_i)$ is the canonical basis of $\ell_1$. Since $y+\delta_i \left( 1- \frac{1}{i} \right) x_i$ is a convex combinations of $y$ and $y+\delta_i x_i$ for each $i\in \mathbb{N}$, we have 
$$\left\lVert y+\delta_i \left( 1- \frac{1}{i} \right) x_i \right\rVert\leq 1+\theta.$$

 Therefore, we have $\lVert T \rVert = 1+\theta$ and $$\lVert T \rVert_e  \leq \|T-U\|\leq 1+\theta- \frac{(1+\theta)(2-\alpha)}{2}<\|T\|.$$

We now claim that $T^*$ does not attains its norm. Otherwise, $T^*$ attains it's norm at some $y^* \in S_{Y^*}$. Since we have
$$\left<T^*y^* , e_i \right>= \left<y^* , Te_i \right>=\left<y^* , y+\delta_i \left( 1- \frac{1}{i} \right) x_i \right>=y^* (y) +\delta_i \left( 1-\frac{1}{i} \right) y^*(x_i) $$
for all $i \in \mathbb{N}$, we see that 
$$T^* (y^*)=\left(y^*(y) +\delta_i \left( 1-\frac{1}{i} \right) y^*(x_i) \right)_i.$$
The fact that $(x_i)$ is weakly null implies that there exists $i_0\in \mathbb{N}$ such that $$\left\lVert y^*(y) +\delta_i \left( 1-\frac{1}{i} \right) y^*(x_i)\right\rVert \leq 1+ \frac{\theta}{2}<1+\theta$$
for every $i>i_0$. Hence, there is a natural number $j\leq i_0$ such that 
$$\left\lVert y^*(y) +\delta_j \left( 1-\frac{1}{j} \right) y^*(x_j)\right\rVert  = 1+ \theta.$$
On the other hand, we have 
\begin{align*}
\left|y^*(y) +\delta_j \left( 1-\frac{1}{j} \right) y^*(x_j)\right|
&=\left\lvert\left(1-\frac{1}{j}\right) \left(y^*(y) +\delta_j  y^*(x_j)\right) +\frac{1}{j}y^* (y) \right\rVert \\
&\leq \left(1-\frac{1}{j}\right)\left\lVert y+\delta_j  x_j \right\rVert+\frac{1}{j}\|y\|\\
&\leq \left(1-\frac{1}{j}\right)(1+\theta)+\frac{1}{j}\\
&<1+\theta
\end{align*}
which leads to a contradiction.
\end{proof}
\end{theorem}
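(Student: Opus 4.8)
The plan is to argue by contraposition: assuming that $Y$ fails the LD2P, I will manufacture an operator $T \in \mathcal{L}(\ell_1, Y)$ whose adjoint does not attain its norm yet which satisfies $\|T\|_e < \|T\|$, contradicting the ACPP of $(\ell_1, Y)$. The failure of the LD2P furnishes a norm-one functional $y_0^*$ and a $\delta > 0$ so that the slice $S(y_0^*, \delta) = \{y \in B_Y : \re y_0^*(y) > 1 - \delta\}$ has diameter $\alpha < 2$; after an application of Bishop--Phelps I may assume $y_0^*$ attains its norm at some $y \in S_Y$. The guiding intuition is that a thin slice forces tangential displacements of $y$ (those living in $\ker y_0^*$) to be small in order to keep the displaced vector inside a fixed multiple of the ball, and this enforced smallness is exactly what will let a rank-one compact perturbation strictly lower the operator norm.

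Since an operator out of $\ell_1$ is determined by the bounded sequence of its values on the unit vectors $(e_i)$, with operator norm equal to the supremum of the norms of these values, I will build $T$ by prescribing $T e_i$. Because $Y$ contains no copy of $\ell_1$, Rosenthal's $\ell_1$-theorem applied inside the infinite-dimensional subspace $\ker y_0^*$ yields a normalized weakly null sequence $(x_i) \subset \ker y_0^*$. Fixing a small $\theta > 0$ with $\frac{1}{1+\theta} > 1 - \frac{\delta}{2}$, I will, for each $i$, choose a scalar $\delta_i$ with $\|y + \delta_i x_i\| = 1 + \theta$ whose modulus is controlled by the diameter. The key geometric point is that the positive and negative solutions $u$ of $\|y + u x_i\| = 1 + \theta$ (which exist by continuity and coercivity) give two rescaled vectors $\frac{1}{1+\theta}(y + u x_i)$ that both lie in $S(y_0^*, \delta)$, since their functional value is exactly $\frac{1}{1+\theta} > 1 - \delta$; hence their mutual distance is at most $\alpha$, which forces at least one of the two solutions to have modulus at most $\frac{(1+\theta)\alpha}{2}$, and I take $\delta_i$ to be that one. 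I then set $T e_i = y + \delta_i(1 - \tfrac{1}{i}) x_i$ and let $U \in \mathcal{K}(\ell_1, Y)$ be the rank-one operator $U e_i = y$.

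With this in hand the two required estimates should follow. Writing $T e_i$ as the convex combination $\tfrac{1}{i}\, y + (1 - \tfrac{1}{i})(y + \delta_i x_i)$ shows $\|T e_i\| \le 1 + \theta$, while the reverse triangle inequality shows $\|T e_i\| \to 1 + \theta$, so $\|T\| = 1 + \theta$; on the other hand $\|T - U\| = \sup_i |\delta_i|(1 - \tfrac{1}{i}) \le \frac{(1+\theta)\alpha}{2} < 1 + \theta$, giving $\|T\|_e \le \|T - U\| < \|T\|$. For the non-attainment of $T^*$, I note that $T^* y^*$ is the sequence $\bigl(y^*(y) + \delta_i(1 - \tfrac{1}{i}) y^*(x_i)\bigr)_i$ in $\ell_\infty$; since $(x_i)$ is weakly null, the entries tend to $|y^*(y)| \le 1 < 1 + \theta$, so any attaining $y^*$ would have to realize the norm at a single index $j$, and a convexity estimate identical to the one bounding $\|T e_i\|$ then gives the strict inequality $|y^*(y) + \delta_j(1 - \tfrac{1}{j}) y^*(x_j)| < 1 + \theta$, a contradiction.

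I expect the main obstacle to be the selection of $\delta_i$: I must simultaneously pin $\|y + \delta_i x_i\|$ at the fixed value $1 + \theta$ (so the perturbed operator has a clean norm) and keep $|\delta_i|$ bounded strictly below $1 + \theta$ (so the compact perturbation genuinely lowers the norm), and it is precisely the sub-$2$ diameter of the slice that must be converted into this gap. The auxiliary factor $(1 - \tfrac{1}{i})$ is the other delicate ingredient: it is introduced solely to push every coordinate of $T^* y^*$ strictly below the supremum, which is what blocks norm attainment by the adjoint; without it the non-attainment argument would break down.
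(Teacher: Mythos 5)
Your proposal is correct and follows essentially the same route as the paper's proof: contraposition via a slice of diameter $\alpha<2$, Rosenthal's theorem in $\ker y_0^*$, the same choice of $\delta_i$ from the two solutions of $\|y+ux_i\|=1+\theta$ (your bound $\tfrac{(1+\theta)\alpha}{2}$ is algebraically identical to the paper's $1+\theta-\tfrac{(1+\theta)(2-\alpha)}{2}$), the same operators $T$ and $U$, and the same convexity argument blocking norm attainment of $T^*$. The only additions are welcome details the paper glosses over, such as the reverse triangle inequality giving $\|Te_i\|\to 1+\theta$.
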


We present the following lemma to generalize proposition \ref{thm:ell1} and theorem \ref{thm:finite}.

\begin{lemma}\label{lem:inherite}
For Banach spaces $X$ and $Y$ and their closed subspaces $X_1 \subset X$ and $Y_1 \subset Y$, if the pair $(X,Y)$ has the ACPP, then the pair $(X/X_1,Y_1)$ has the ACPP.
\end{lemma}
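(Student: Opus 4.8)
The plan is to lift a given operator on $X/X_1$ to an operator on the whole of $X$, apply the ACPP hypothesis for $(X,Y)$ there, and then push the conclusion back down. Write $Q\colon X\to X/X_1$ for the quotient map and $\iota\colon Y_1\hookrightarrow Y$ for the inclusion, so that $Q$ is a metric surjection and $\iota$ is an isometric embedding. Given $S\in\mathcal{L}(X/X_1,Y_1)$ whose adjoint fails to attain its norm, I would set $T=\iota S Q\in\mathcal{L}(X,Y)$. Since $Q$ carries $B_X$ onto a dense subset of $B_{X/X_1}$ and $\iota$ preserves norms, one sees immediately that $\|T\|=\|S\|$, and more generally that $\|T-\iota KQ\|=\|S-K\|$ for every $K\in\mathcal{L}(X/X_1,Y_1)$; these two elementary identities will do all the bookkeeping.

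The crux is to verify that $T^*$ also fails to attain its norm, so that the ACPP of $(X,Y)$ becomes applicable to $T$. Here I would use $T^*=Q^*S^*\iota^*$, noting that $Q^*$ is an isometric embedding while $\iota^*\colon y^*\mapsto y^*|_{Y_1}$ is a metric surjection. Suppose $T^*$ attained its norm at some $y^*\in S_{Y^*}$. Because a non-attaining adjoint forces $S\neq 0$, and hence $\|S^*\|=\|S\|=\|T\|>0$, the restriction $z^*=y^*|_{Y_1}$ would satisfy
\[
\|S^*z^*\|=\|Q^*S^*z^*\|=\|T^*y^*\|=\|T\|=\|S^*\|,
\]
forcing $\|z^*\|=1$ and exhibiting $z^*$ as a norm-attaining vector for $S^*$ --- a contradiction. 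This transfer of non-attainment, which rests entirely on the isometry of $Q^*$ together with $\|S^*\|>0$, is the step I expect to be the main obstacle; everything else is formal.

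Finally I would compare essential norms. For any compact $K\in\mathcal{K}(X/X_1,Y_1)$ the operator $\iota KQ$ is compact on $X\to Y$, and the identity $\|T-\iota KQ\|=\|S-K\|$ gives $\|T\|_e\le\|S\|_e$ upon taking infima (the left-hand infimum runs over the larger class $\mathcal{K}(X,Y)$). Since $T^*$ does not attain its norm, the ACPP of $(X,Y)$ yields $\|T\|_e=\|T\|=\|S\|$, and combining this with the trivial bound $\|S\|_e\le\|S\|$ forces $\|S\|_e=\|S\|$, which is precisely the ACPP for $(X/X_1,Y_1)$.
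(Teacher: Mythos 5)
Your proposal is correct and follows essentially the same route as the paper: lift $S$ to $\iota S Q$, transfer the non-attainment of the adjoint through $Q^*S^*\iota^*$ using that $Q^*$ is an isometric embedding and $\iota^*$ a metric surjection, apply the ACPP of $(X,Y)$, and push the essential-norm equality back down via $\|\iota SQ-\iota KQ\|=\|S-K\|$. Your explicit verification that the restricted functional $z^*$ must have norm one is a detail the paper's proof leaves implicit, but the arguments are otherwise identical.
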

\begin{proof}
Let $\pi:X\to X/{X_1}$ be the natural quotient map and $i:Y_1\to Y$ be the natural injection. For an operator $T\in\mathcal{L}(X/{X_1},Y_1)$, if its adjoint operator $T^*$ does not attain its norm, then neither does $\pi^*\circ T^* \circ i^*\in\mathcal{L}(Y^*,X^*)$. Otherwise, there is $y^*\in S_{Y^*}$ such that $\|\pi^*\circ T^* \circ i^*\|=\|\left(\pi^*\circ T^* \circ i^*\right)y^*\|$, and $T^*$ attains its norm at $i^*y^*\in S_{Y_1^*}$ since $\|T^*\|=\|\pi^*\circ T^* \circ i^*\|$.

 Since $(X,Y)$ has the ACPP, we have $$\|i\circ T\circ\pi\|_e = \|i\circ T\circ\pi\|=\|T\|.$$

Hence, we see that 
$$\|T-K\|=\|i\circ (T-K)\circ\pi\|=\|i\circ T\circ \pi-i\circ K\circ \pi\| \geq \|i\circ T\circ \pi\|=\|T\|$$ for any $K\in\mathcal{K}(X/{X_1},Y_1)$ which shows that $\|T\|_e=\|T\|$. Therefore, $(X/X_1,Y_1)$ has the ACPP.
\end{proof}

\begin{cor} For an infinite dimensional Banach space $Y$,
\begin{enumerate}
\item if $\ell_1$ is a subspace of $Y$, then the pair $(\ell_1,Y)$ does not have the ACPP.
\item if $Y$ contains no isomorphic copy of $\ell_1$ and $(\ell_1,Y)$ has the ACPP, then every infinite-dimensional subspace of $Y$ has the LD2P.
\end{enumerate}
\end{cor}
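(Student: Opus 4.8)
The plan is to derive both statements as immediate consequences of the inheritance Lemma \ref{lem:inherite} together with Proposition \ref{thm:ell1} and Theorem \ref{thm:finite}; no new construction should be needed. The unifying device is to apply Lemma \ref{lem:inherite} with $X=\ell_1$ and $X_1=\{0\}$, so that $X/X_1=\ell_1$; this shows that the ACPP of $(\ell_1,Y)$ is inherited by $(\ell_1,Y_1)$ for every closed subspace $Y_1\subset Y$.

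For (1), I would argue by contradiction: suppose $(\ell_1,Y)$ has the ACPP and let $Y_1\subset Y$ be the copy of $\ell_1$. Applying the Lemma with $X=\ell_1$, $X_1=\{0\}$, and this $Y_1$ yields that $(\ell_1,Y_1)$ has the ACPP. Since $Y_1$ is (isometrically) $\ell_1$ and the ACPP transfers across an isometric identification of the range space, it would follow that $(\ell_1,\ell_1)$ has the ACPP, contradicting Proposition \ref{thm:ell1}.

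For (2), I would take an arbitrary infinite-dimensional closed subspace $Y_1$ of $Y$ and apply the Lemma in the same way to conclude that $(\ell_1,Y_1)$ has the ACPP. Because $Y_1\subset Y$ and $Y$ contains no isomorphic copy of $\ell_1$, the subspace $Y_1$ contains none either, and $Y_1$ is infinite-dimensional by choice. Thus all hypotheses of Theorem \ref{thm:finite} are satisfied for $Y_1$, and we obtain that $Y_1$ has the LD2P.

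The argument is essentially formal, so I do not anticipate a genuine obstacle; the reasoning is little more than a bookkeeping of hypotheses. The only point that deserves a line of care is the transfer step in (1): one must observe that an isometric isomorphism $J:\ell_1\to Y_1$ induces a norm-preserving bijection $T\mapsto J^{-1}T$ between $\mathcal{L}(\ell_1,Y_1)$ and $\mathcal{L}(\ell_1)$ that maps compact operators to compact operators and respects norm-attainment of adjoints, so that the ACPP genuinely passes between $(\ell_1,Y_1)$ and $(\ell_1,\ell_1)$. A parallel, and easier, remark underlies (2), namely that the property of containing no isomorphic copy of $\ell_1$ is inherited by the subspace $Y_1$, which is what lets Theorem \ref{thm:finite} apply.
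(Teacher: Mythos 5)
Your proposal is correct and follows exactly the route the paper intends: the corollary is stated without proof as an immediate consequence of Lemma \ref{lem:inherite} (applied with $X=\ell_1$, $X_1=\{0\}$, and $Y_1$ the relevant subspace), combined with Proposition \ref{thm:ell1} for part (1) and Theorem \ref{thm:finite} for part (2). Your extra remark about transferring the ACPP across the isometric identification of $Y_1$ with $\ell_1$ is a sensible point of care and consistent with reading ``subspace'' in (1) as an isometric copy.
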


 Remark that if $\mathcal{K}(\ell_1,Y)$ is an M-ideal in $\mathcal{L}(\ell_1,Y)$, then $Y$ is an Asplund space \cite[Lemma 2.6]{LORW}. Hence, we have the following consequence of theorem \ref{thm:finite} which generalizes \cite[Theorem 2.12 (a)]{LORW}.

\begin{cor}\label{midld2p}
For an infinite dimensional Banach space $Y$, if $\mathcal{K}(\ell_1,Y)$ is an M-ideal in $\mathcal{L}(\ell_1,Y)$, then every infinite dimensional subspace of a quotient of $Y$ has the LD2P. In particular, this subspace is not reflexive.
\end{cor}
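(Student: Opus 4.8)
The plan is to reduce the statement to Theorem \ref{thm:finite}, which already gives the LD2P under the combination ``$(\ell_1,\cdot)$ has the ACPP and $\cdot$ contains no copy of $\ell_1$'', and then to read off non-reflexivity from the Radon--Nikod\'ym property. The soft parts come first. Since $\mathcal{K}(\ell_1,Y)$ is an M-ideal, $Y$ is Asplund by \cite[Lemma 2.6]{LORW}; as the Asplund property passes to quotients and to subspaces, every subspace of every quotient of $Y$ is Asplund and hence contains no isomorphic copy of $\ell_1$. Also, by Theorem \ref{ACPPMid} the pair $(\ell_1,Y)$ has the ACPP. If I can show that $\mathcal{K}(\ell_1,Y/Y_0)$ is again an M-ideal for every closed subspace $Y_0\subset Y$, then $(\ell_1,Y/Y_0)$ has the ACPP by Theorem \ref{ACPPMid}, and Lemma \ref{lem:inherite} (applied with the trivial quotient of the domain $\ell_1$ and with $Y/Y_0$ in the role of the range) transfers the ACPP to $(\ell_1,Z)$ for every closed subspace $Z\subset Y/Y_0$. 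For an infinite-dimensional such $Z$, which contains no copy of $\ell_1$ by the above, Theorem \ref{thm:finite} then yields that $Z$ has the LD2P. Finally a reflexive space has the Radon--Nikod\'ym property and therefore cannot have the LD2P, so $Z$ is non-reflexive.

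The heart of the matter, and the step I expect to be the main obstacle, is thus the stability of the M-ideal property under quotients of the \emph{range}: if $\mathcal{K}(\ell_1,Y)$ is an M-ideal in $\mathcal{L}(\ell_1,Y)$ and $q\colon Y\to Y/Y_0$ is a quotient map, then $\mathcal{K}(\ell_1,Y/Y_0)$ is an M-ideal in $\mathcal{L}(\ell_1,Y/Y_0)$. Lemma \ref{lem:inherite} only covers subspaces of the range (and quotients of the domain), and it does so by composing with canonical maps; here there is no canonical operator $\ell_1\to Y$ attached to a given $T\in\mathcal{L}(\ell_1,Y/Y_0)$, so one genuinely has to lift through $q$. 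I would verify the three-ball characterization of M-ideals (see \cite{HWW}): given $T\in B_{\mathcal{L}(\ell_1,Y/Y_0)}$, compact operators $K_1,K_2,K_3\in B_{\mathcal{K}(\ell_1,Y/Y_0)}$ and $\eps>0$, I must produce a compact $K$ with $\|T+K_i-K\|\le 1+\eps$ for $i=1,2,3$.

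The delicate point is that the two kinds of lift must be taken differently. For $T$ I would use that an operator on $\ell_1$ is determined by the images of the unit vectors and that its norm is the supremum of their norms: choosing $z_n\in Y$ with $qz_n=Te_n$ and $\|z_n\|\le\|Te_n\|+\eps2^{-n}$ and extending linearly gives $\widetilde T\in\mathcal{L}(\ell_1,Y)$ with $q\widetilde T=T$ and $\|\widetilde T\|\le 1+\eps$, where compactness plays no role so arbitrary lifts of the basis images suffice. For each compact $K_i$ the set $\{K_ie_n:n\in\N\}\subset K_i(B_{\ell_1})$ is relatively compact, and arbitrary lifts need not preserve this; instead I would fix a continuous, bounded, positively homogeneous section $\sigma\colon Y/Y_0\to Y$ of $q$ (Bartle--Graves selection theorem) with $\|\sigma(y)\|\le(1+\eps)\|y\|$ and set $\widetilde K_ie_n=\sigma(K_ie_n)$. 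Then $\{\widetilde K_ie_n:n\in\N\}=\sigma(\{K_ie_n:n\in\N\})$ is relatively compact, so its closed absolutely convex hull is compact and $\widetilde K_i$ is a compact lift of $K_i$ with $\|\widetilde K_i\|\le 1+\eps$. Applying the three-ball property of $\mathcal{K}(\ell_1,Y)$ in $\mathcal{L}(\ell_1,Y)$ to $\widetilde T,\widetilde K_1,\widetilde K_2,\widetilde K_3$ (after rescaling by $1/(1+\eps)$) produces a compact $\widetilde K$ with $\|\widetilde T+\widetilde K_i-\widetilde K\|\le 1+\eps'$, whence $K:=q\widetilde K$ is compact and $\|T+K_i-K\|=\|q(\widetilde T+\widetilde K_i-\widetilde K)\|\le 1+\eps'$. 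Tracking the accumulated $\eps$-losses completes this stability lemma and, with it, the corollary; the only genuine care needed is the bookkeeping of those losses and the verification that the continuous section keeps the lifted families relatively compact, both of which are routine once the scheme above is in place.
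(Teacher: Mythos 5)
Your argument is correct and follows the same skeleton as the paper's proof: pass the M-ideal property to range quotients $Y/Y_0$, deduce the ACPP of $(\ell_1,Y/Y_0)$ from Theorem \ref{ACPPMid}, transfer it to subspaces $Z\subseteq Y/Y_0$ via Lemma \ref{lem:inherite}, rule out copies of $\ell_1$ via the Asplund property, apply Theorem \ref{thm:finite}, and conclude non-reflexivity from the Radon--Nikod\'ym property. The one place you diverge is the step you correctly identify as the crux: the paper simply cites \cite[Proposition 2.11]{LORW} for the fact that $\mathcal{K}(\ell_1,Y/Y_0)$ remains an M-ideal, whereas you re-prove it via the restricted three-ball property, lifting the bounded operator coordinatewise on the basis of $\ell_1$ and lifting the compact operators through a positively homogeneous Bartle--Graves section so that relative compactness of $\{K_ie_n\}$ is preserved. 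That argument is sound (the closed absolutely convex hull of the lifted set is compact, the $(1+\eps)$-losses rescale away, and $q$ pushes the resulting compact operator back down), so you have traded a citation for a self-contained proof of the only nontrivial ingredient; everything else matches the paper.
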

\begin{proof} If $\mathcal{K}(\ell_1,Y)$ is an M-ideal in $\mathcal{L}(\ell_1,Y)$, then $\mathcal{K}(\ell_1,Y/Z)$ is an M-ideal in $\mathcal{L}(\ell_1,Y/Z)$ for arbitrary closed subspace $Z$ of $Y$ by \cite[Proposition 2.11]{LORW}. Hence, we have that $(\ell_1,Y/Z)$ has the ACPP by theorem \ref{ACPPMid} and $Y/Z$ does not contain an isomorphic copy of $\ell_1$ by \cite[Lemma 2.6]{LORW}. Therefore, we deduce that every subspace of $Y/Z$ has the LD2P if it is infinite dimensional. Moreover, it is not reflexive since every reflexive space has the Radon-Nikod\'ym property that means its closed unit ball contains a slice with arbitrary diameter $\varepsilon>0$.
\end{proof}

 In several papers, a Banach space $X$ satisfying that $\mathcal{K}(X\oplus_p X)$ is an M-ideal in $\mathcal{L}(X\oplus_p X)$ for $1<p\leq \infty$ is studied. This property is called \textbf{property $\bf (M_p)$}, and the following characterization of a Banach space $X$ having property $(M_\infty)$ is well known.

\begin{theorem}\cite[Corollary 6,7]{Oja1}\label{Ojaminf} For a Banach space $Y$, the following are equivalent.
\begin{enumerate}[(i)]
\item $Y$ has property $(M_\infty)$.
\item $Y$ has the MCAP and $\mathcal{K}(\ell_1,Y)$ is an M-ideal in $\mathcal{L}(\ell_1,Y)$.
\item $Y$ has the MCAP, contains no isomorphic copy of $\ell_1$, and has \textbf{property $\bf (m_\infty)$} which means that for every weakly null sequence $(y_n)\subset Y$ and $y\in Y$
$$\limsup_{n} \| y + y_n\|=\max\left\{\|y\|, \limsup_{n}  \|y_n\|\right\}.$$ 
\end{enumerate}
\end{theorem}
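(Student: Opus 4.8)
The statement is Oja's characterization, so my plan is organized around proving the two equivalences (i)$\Leftrightarrow$(iii) and (ii)$\Leftrightarrow$(iii), treating condition (iii) as the common geometric core; combining them yields the full cycle. The two engines are the restricted three-ball property for M-ideals (for closed $J\subset X$, $J$ is an M-ideal iff for every $x\in B_X$, $j_1,j_2,j_3\in B_J$ and $\varepsilon>0$ there is $j\in J$ with $\|x+j_i-j\|\le 1+\varepsilon$ for $i=1,2,3$; see \cite{HWW}) and the Kalton--Werner characterization recalled in the introduction \cite{KW}. I would open with two isometric identifications. First, $T\mapsto (Te_n)_n$ gives $\mathcal{L}(\ell_1,Y)=\ell_\infty(Y)$ with $\|T\|=\sup_n\|Te_n\|$, under which $\mathcal{K}(\ell_1,Y)$ is exactly the subspace of sequences with relatively compact range (so it contains every constant sequence, not merely $c_0(Y)$). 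Second, $\mathcal{L}(Y\oplus_\infty Y)$ is the algebra of $2\times 2$ operator matrices over $\mathcal{L}(Y)$ and $\mathcal{K}(Y\oplus_\infty Y)$ consists of the matrices with all four entries compact. Since an M-ideal of compact operators forces the MCAP and the MCAP is inherited by $1$-complemented subspaces, the three MCAP clauses are consistent, and in every implication the genuine content is property $(m_\infty)$ together with the absence of $\ell_1$.

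For (i)$\Leftrightarrow$(iii) I would run everything through \cite{KW} applied to $Z:=Y\oplus_\infty Y$ (after the standard reduction to the separable case). The key elementary observation is that property $(m_\infty)$ for $Y$ is equivalent to property $(M)$ for the pair $(Z,Z)$: a weakly null sequence in $Z$ is coordinatewise weakly null, the norm on $Z$ is the coordinatewise maximum, and a short computation with $\limsup\|v+Tu_n\|=\max_{\text{coords}}(\dots)$ and the contractivity $\|Tu_n\|\le\|u_n\|$ reduces the defining inequality of property $(M)$ to two instances of $(m_\infty)$. It remains to supply the Kalton--Werner approximating sequence $K_n$ on $Z$ with $K_n,K_n^*\to \mathrm{id}$ strongly and $\|\mathrm{id}_Z-2K_n\|\to 1$: taking $K_n=\mathrm{diag}(S_n,S_n)$ for a metric compact approximating sequence $S_n$ of $Y$, the estimate $\|\mathrm{id}_Z-2K_n\|=\|\mathrm{id}_Y-2S_n\|\to 1$ is where $(m_\infty)$ is needed (writing $y=S_ny+(y-S_ny)$ with a compact head and an asymptotically weakly null tail, $(m_\infty)$ forces the two to combine by maximum), while $K_n^*\to\mathrm{id}$ strongly comes from $Y$ being Asplund, i.e. containing no $\ell_1$. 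The converse direction reads the same chain backwards: (i) gives the MCAP of $Z$ hence of $Y$, gives the unconditional approximating sequence as part of the M-ideal structure, and via \cite{KW} gives property $(M)$ of $(Z,Z)$, which specializes to $(m_\infty)$; the absence of $\ell_1$ follows since an M-ideal of compact operators makes $Z$, hence $Y$, Asplund.

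For (ii)$\Leftrightarrow$(iii) I would argue directly in $\ell_\infty(Y)$, since the Kalton--Werner hypothesis fails on the domain $\ell_1$ (its finite sections are not shrinking, and by the Schur property weakly null sequences in $\ell_1$ are norm null, so property $(M)$ of $(\ell_1,Y)$ is vacuous and all content must sit in the range). For (ii)$\Rightarrow$(iii), M-ideal structure gives the MCAP and, by \cite[Lemma 2.6]{LORW}, the Asplund/no-$\ell_1$ clause; to extract $(m_\infty)$ I would feed a weakly null $(y_n)\subset B_Y$ and a fixed $y$ into the M-structure through the operator $(y,y_1,y_2,\dots)$, whose weak-limit coordinate is compact while its tail is annihilated by every compact perturbation, so the L-summand orthogonality of $\mathcal{K}(\ell_1,Y)^\perp$ forces $\limsup_n\|y+y_n\|$ to be the maximum of the two pieces. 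For (iii)$\Rightarrow$(ii) I would verify the three-ball property: given $T=(z_n)\in B_{\ell_\infty(Y)}$ and compact $K_1,K_2,K_3$, pass to a subsequence along which $(z_n)$ converges weakly to some $z_0$ (possible because $Y$ has no $\ell_1$, by Rosenthal), use the MCAP to choose a finite-rank operator absorbing $T$ on a long initial block, and on the tail take the candidate $j$ constant equal to $z_0$; then each residual $z_n+(K_i)_n-z_0=(z_n-z_0)+(K_i)_n$ is an asymptotically weakly null term plus a relatively compact one, and $(m_\infty)$ bounds its $\limsup$ by $\max(\limsup\|z_n-z_0\|,\limsup\|(K_i)_n\|)\le 1$, where $\limsup\|z_n-z_0\|\le 1$ is itself a consequence of $(m_\infty)$ applied to $z_n=z_0+(z_n-z_0)$.

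The hard part will be the three-ball construction in (iii)$\Rightarrow$(ii): producing a single compact $j$ that simultaneously controls all three perturbations on the initial block (where one cannot cancel $K_1,K_2,K_3$ individually and must instead absorb $T$ itself via the MCAP) and on the tail (where the common weak limit $z_0$ does the work through $(m_\infty)$), then reconciling the block and tail choices into one globally compact operator of norm under control. This is the standard but delicate Kalton--Werner bookkeeping, together with the uniformization that upgrades the pointwise $(m_\infty)$ behavior to the operator-norm estimate $\|\mathrm{id}_Y-2S_n\|\to 1$; the rest of the proof is, by comparison, routine manipulation of the two identifications above.
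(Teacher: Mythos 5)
First, a point of comparison: the paper does not prove this theorem at all --- it is quoted from \cite[Corollaries 6, 7]{Oja1} as a known result --- so there is no internal proof to measure your argument against, and I can only assess it on its own terms. Your treatment of (i)$\Leftrightarrow$(iii) follows the standard Kalton--Werner/Oja route: the observation that property $(m_\infty)$ for $Y$ is equivalent to property $(M)$ for $Y\oplus_\infty Y$ is correct (test $id_Z$ on the weakly null sequence $(y_n,0)$ against the equal-norm vectors $(y,0)$ and $(0,y)$, and invoke \cite[Lemma 2.2]{K1}), and the rest is the separable characterization of $\mathcal{K}(Z)$ being an M-ideal. This half is sound in outline, though the ``standard reduction to the separable case'' you wave at is precisely the content of \cite{Oja1} and not a triviality.

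Both halves of (ii)$\Leftrightarrow$(iii), however, have genuine gaps. For (iii)$\Rightarrow$(ii), your three-ball verification opens by ``passing to a subsequence along which $(z_n)$ converges weakly to some $z_0$.'' This is not available: Rosenthal's theorem only yields a weakly Cauchy subsequence, whose limit in general lives in $Y^{**}\setminus Y$; and, more seriously, passing to a subsequence changes the operator --- the compact candidate $j$ in the three-ball property must control \emph{every} coordinate of the original $T=(z_n)$, and a generic $(z_n)\subset B_Y$ (say, a sequence dense in $B_Y$) admits no single weak cluster point and no decomposition into a compact head plus an asymptotically weakly null tail. This is exactly where the MCAP of $Y$ has to do real work, and your sketch does not say how. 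For (ii)$\Rightarrow$(iii), the extraction of $(m_\infty)$ from ``the L-summand orthogonality of $\mathcal{K}(\ell_1,Y)^\perp$'' is asserted rather than argued (even the lower estimate $\limsup_n\|y+y_n\|\ge\limsup_n\|y_n\|$ is not automatic for a weakly null $(y_n)$), and --- decisively --- as written it makes no use of the MCAP hypothesis in (ii). But the paper records, immediately after stating this theorem, that whether the M-ideal condition on $\mathcal{K}(\ell_1,Y)$ \emph{alone} implies $(m_\infty)$ is an open conjecture from \cite{LORW}; so a correct proof of (ii)$\Rightarrow$(iii) must route the MCAP into the argument in an essential way, and an argument that reads $(m_\infty)$ directly off the L-decomposition cannot be completed as stated.
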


It is worth to introduce a conjecture in \cite{LORW} that if  $\mathcal{K}(\ell_1,Y)$ is an M-ideal in $\mathcal{L}(\ell_1,Y)$, then $Y$ has property $(m_\infty)$. We still don't know the answer, but we guess that it could be true since if an infinite dimensional Banach space having property $(m_\infty)$ contains no isomorphic copy of $\ell_1$ then it has the LD2P.

It is known that if a pair $(X,\ell_\infty)$ has the CPP, then $X$ is finite-dimensional (\cite[Theorem 2.1]{GMA}). It is worth to comment that if the pair $(X,\ell_\infty)$  has the CPP, then the pair $(\ell_1,X^*)$ has the ACPP by the following theorem. Hence, we get \cite[Theorem 2.1]{GMA} as a corollary of theorem \ref{thm:finite}. 

\begin{theorem}For Banach spaces $X$ and $Y$, if  the pair $(Y^*,X^*)$ has the CPP, then the pair $(X,Y)$ has the ACPP.
\end{theorem}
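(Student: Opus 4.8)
The plan is to lean on the equivalent reformulation of the CPP recorded earlier in the excerpt: a pair $(Z,W)$ has the CPP if and only if every $S\in\mathcal{L}(Z,W)$ that does not attain its norm satisfies $\|S\|_e=\|S\|$. So I would start from an operator $T\in\mathcal{L}(X,Y)$ whose adjoint $T^*$ does not attain its norm, and aim to show $\|T\|_e=\|T\|$. The central observation is trivial but decisive: $T^*$ is itself a member of $\mathcal{L}(Y^*,X^*)$, so the CPP of $(Y^*,X^*)$ can be applied to $T^*$ verbatim.

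First I would apply the CPP of $(Y^*,X^*)$ to the operator $S=T^*$. Since by assumption $T^*$ does not attain its norm, this yields immediately $\|T^*\|_e=\|T^*\|$. Using the standard isometric identity $\|T^*\|=\|T\|$, the whole statement reduces to relating the two essential norms $\|T\|_e$ and $\|T^*\|_e$.

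The one genuine step is to establish $\|T\|_e\geq\|T^*\|_e$. This uses that the adjoint of a compact operator is compact and that adjunction is isometric: for every $K\in\mathcal{K}(X,Y)$ we have $K^*\in\mathcal{K}(Y^*,X^*)$ and $\|T-K\|=\|(T-K)^*\|=\|T^*-K^*\|$. Since $\{K^*:K\in\mathcal{K}(X,Y)\}$ is merely a subset of $\mathcal{K}(Y^*,X^*)$, passing to the infimum over the possibly larger set of all compact operators on $Y^*$ can only decrease the value, giving
$$\|T\|_e=\inf_{K\in\mathcal{K}(X,Y)}\|T^*-K^*\|\geq\inf_{L\in\mathcal{K}(Y^*,X^*)}\|T^*-L\|=\|T^*\|_e.$$
Chaining the relations produces $\|T\|_e\geq\|T^*\|_e=\|T^*\|=\|T\|$, while $\|T\|_e\leq\|T\|$ holds trivially by taking $K=0$. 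Hence $\|T\|_e=\|T\|$, and $(X,Y)$ has the ACPP.

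I do not expect any serious obstacle; the argument is a short chain of (in)equalities built on two elementary facts (adjunction is isometric and preserves compactness) together with the reformulation of the CPP. The only point that requires care is the \emph{direction} of the inequality $\|T\|_e\geq\|T^*\|_e$: one should not hope for equality, because a compact operator $L\colon Y^*\to X^*$ need not be the adjoint of any compact operator $X\to Y$, so the reverse inequality is unavailable — and, fortunately, it is not needed for the conclusion.
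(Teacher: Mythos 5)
Your argument is correct and is essentially the paper's own proof: both reduce the claim to the chain $\|T^*\|_e\leq\|T\|_e\leq\|T\|=\|T^*\|$ after applying the reformulated CPP of $(Y^*,X^*)$ to $T^*$. You simply spell out in more detail the step the paper calls ``obvious,'' namely that adjunction is isometric, preserves compactness, and maps $\mathcal{K}(X,Y)$ into (but not necessarily onto) $\mathcal{K}(Y^*,X^*)$, which gives exactly the needed direction of the inequality between the essential norms.
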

\begin{proof} Assume that the adjoint $T^*$ of $T\in \mathcal{L}(X,Y)$ does not attain its norm. Since $(Y^*,X^*)$ has the CPP, we see that $\|T^*\|_e=\|T^*\|$. Hence, the statement is followed by the inequality 
$$\|T^*\|_e\leq \|T\|_e\leq \|T\|=\|T^*\|$$
which is obvious.
\end{proof}

As an application of theorem \ref{thm:finite}, we find a II-polyhedral Banach space $Y$ such that $(\ell_1,Y)$ does not have the ACPP.

\begin{example}\label{exfailACPP}
There exists a II-polyhedral renorming $Y$ of $c_0$ such that $Y$ does not have the ACPP. 
\end{example}
\begin{proof}
For $X= c_0 \oplus_1 \mathbb{R}$, let $Y= c_0 \oplus \mathbb{R}$ be equipped with the norm such that $B_{Y^*} = \text{conv}( B_{X^*} \cup \{(0, \pm2)\})$.  This is the one given in \cite[Example 4.1]{VL} as an example of II-polyhedral Banach spaces which is not I-polyhedral. It is obvious that $Y$ does not have the LD2P since $B_Y$ is the convex hull of $B_{c_0} \times \{0\} \cup \frac{1}{2}B_{c_0} \times \left\{\pm \frac{1}{2} \right\}$.
\end{proof}

We finish this section with giving an analogue of \cite[Proposition 3.2]{GMA} for ACPP which presents a class of Banach spaces which does not have ACPP.

\begin{prop}
For $1 \leq q < p \leq \infty$ and Banach spaces $X$ and $Y$, if there exists $T \in \mathcal{L}(X,Y)$ whose adjoint $T^*$ does not attain its norm, then the pair $(X \oplus_p \mathbb{R},Y\oplus_q \mathbb{R})$ fails the ACPP.
\end{prop}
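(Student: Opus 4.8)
The plan is to imitate the construction underlying \cite[Proposition 3.2]{GMA} for the CPP, transported to the adjoint setting. Since $T^*$ does not attain its norm we have $T\neq 0$, so after rescaling we may assume $\lVert T\rVert=1$. I would then define $S\in\mathcal{L}(X\oplus_p\mathbb{R},\,Y\oplus_q\mathbb{R})$ by
$$S(x,t)=(Tx,\,t),$$
the idea being that the $\oplus_q$ on the target with $q<p$ is "fatter" than the $\oplus_p$ on the source, so that the identity $(a,b)\mapsto(a,b)$ from the $\ell_p$-norm to the $\ell_q$-norm on $\mathbb{R}^2$ has norm $2^{1/q-1/p}>1$; this gain is what will separate $\lVert S\rVert$ from $\lVert S\rVert_e$. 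Using the standard duality $(X\oplus_p\mathbb{R})^*=X^*\oplus_{p'}\mathbb{R}$ and $(Y\oplus_q\mathbb{R})^*=Y^*\oplus_{q'}\mathbb{R}$ (conjugate exponents, with the convention $1/\infty=0$), one reads off $S^*(y^*,s)=(T^*y^*,\,s)$, and note that $q<p$ forces $p'<q'$.

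For the lower bound on $\lVert S\rVert$, I would pick $x_n\in S_X$ with $\lVert Tx_n\rVert\to1$ and test $S$ on the unit vectors $(2^{-1/p}x_n,\,2^{-1/p})$, which gives $\lVert S\rVert\ge 2^{1/q-1/p}$. For the essential norm the key move is a rank-one perturbation: the operator $R(x,t)=(0,t)$ is compact, so $(S-R)(x,t)=(Tx,0)$ and therefore
$$\lVert S\rVert_e\le\lVert S-R\rVert=\lVert T\rVert=1.$$
Since $2^{1/q-1/p}>1$, this already yields $\lVert S\rVert_e\le 1<\lVert S\rVert$, which is the separation required to violate the ACPP.

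It remains to check that $S^*$ does not attain its norm, and this is the step where the hypothesis on $T^*$ is used and where the only real care is needed. Writing $a=\lVert y^*\rVert$, $b=|s|$ and $c=\lVert T^*y^*\rVert\le a$, I would estimate $\lVert S^*(y^*,s)\rVert_{p'}=\lVert(c,b)\rVert_{p'}\le\lVert(a,b)\rVert_{p'}\le 2^{1/p'-1/q'}\lVert(a,b)\rVert_{q'}$, the last inequality being the comparison of the $\ell_{p'}$- and $\ell_{q'}$-norms on $\mathbb{R}^2$ (valid because $p'<q'$). This bounds $\lVert S^*\rVert\le 2^{1/p'-1/q'}=2^{1/q-1/p}$ and pins down equality: it can hold only if $a=b>0$ and $c=a$, i.e.\ $\lVert T^*y^*\rVert=\lVert y^*\rVert$ for some $y^*\neq0$, which is exactly the statement that $T^*$ attains its norm. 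As this is excluded, the supremum defining $\lVert S^*\rVert$ is never attained. The main obstacle is purely bookkeeping in the two extreme cases $p=\infty$ (so $p'=1$) and $q=1$ (so $q'=\infty$), where the relevant $\ell_r$-norms on $\mathbb{R}^2$ must be read with the sup/sum conventions; the comparison inequality and the conclusion that $a=b>0$ forces $T^*$ to attain its norm go through unchanged. Combining the non-attainment of $S^*$ with $\lVert S\rVert_e\le 1<\lVert S\rVert$ shows that $(X\oplus_p\mathbb{R},\,Y\oplus_q\mathbb{R})$ fails the ACPP.
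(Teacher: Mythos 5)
Your proposal is correct and follows essentially the same route as the paper: the same operator $S(x,t)=(Tx,t)$, the same rank-one compact perturbation onto the $\mathbb{R}$-coordinate to bound $\lVert S\rVert_e\leq\lVert T\rVert$, and the same reduction of norm-attainment of $S^*$ to that of $T^*$. The only cosmetic difference is that you compute $\lVert S\rVert=2^{1/q-1/p}$ via the $\ell_{p'}$-versus-$\ell_{q'}$ comparison on $\mathbb{R}^2$ (with its equality case), whereas the paper parametrizes the dual unit sphere directly and only needs $\lVert S^*\rVert>1$; your equality analysis in fact makes the non-attainment step more explicit than the paper's.
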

\begin{proof}
For the convenience, we assume that $\lVert T \rVert =1$, and define $S \in \mathcal{L}(X \oplus_p \mathbb{R},Y\oplus_q \mathbb{R})$ by $S(x,a)=(Tx,a)$ for each $(x,a)\in X\oplus_p \mathbb{R}$. 

For the conjugates $p^*$ and $q^*$ of $p$ and $q$ respectively, if $q \neq 1$, then we have

\begin{align*}
 \lVert S^* \rVert 
&= \sup_{\substack{y^* \in S_{Y^*} \\ 0 \leq \lambda \leq 1}}{\left\lVert S^* \left(\lambda y^* , \pm \left(1-\lambda^{q^*} \right)^{\frac{1}{q^*}}\right) \right\rVert_{p^*}} \\
&=\sup_{0 \leq \lambda \leq 1 }{\left(\lambda^{p^*}\|T^*\|^{p^*}+\left(1-\lambda^{q^*}\right)^{\frac{p^*}{q^*}}\right)^{\frac{1}{p^*}}}\\
&=\sup_{0 \leq \lambda \leq 1 }\|(\lambda,(1-\lambda^{q^*})^{1/q^*}\|_{p^*}>1.
\end{align*}

If $q=1$, then we have 
$$\lVert S^* \rVert = \sup_{y^* \in S_{Y^*}}{\left\lVert S^* (y^*, \pm 1) \right\rVert_{p^*} } 
=\sup_{y^* \in S_{Y^*}}{\left(\lVert T^*y^* \rVert^{p^*} +1 \right)^{\frac{1}{p^*}}}  
= 2^{\frac{1}{p^*}} >1 
$$
These show that $S^*$ does not attain its norm since $T^*$ attains its norm otherwise. Moreover, we also have
$$\|T\oplus 0 + 0\oplus id_\mathbb{R}\|=\lVert S \rVert>\|T\|=\|T\oplus 0 \|\geq \|S\|_e.$$
\end{proof}
\begin{cor}
For $1 \leq q < p \leq \infty$ and Banach spaces $X$ and $Y$, if there exists $T \in \mathcal{L}(X,Y)$ whose adjoint $T^*$ does not attain its norm, then the pair $\mathcal{K}(X \oplus_p \mathbb{R},Y\oplus_q \mathbb{R})$ is not an M-ideal in $\mathcal{L}(X \oplus_p \mathbb{R},Y\oplus_q \mathbb{R})$.
\end{cor}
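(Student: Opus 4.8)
The plan is to obtain this as an immediate consequence of the proposition just established together with W. Werner's Theorem \ref{ACPPMid}, so the real content has already been carried out and only a one-line logical deduction remains. First I would invoke the preceding proposition: under the very same hypotheses, namely $1 \leq q < p \leq \infty$ and the existence of some $T \in \mathcal{L}(X,Y)$ whose adjoint $T^*$ does not attain its norm, the pair $(X \oplus_p \mathbb{R}, Y \oplus_q \mathbb{R})$ fails the ACPP. No new computation is needed here; one simply records that the hypotheses of the corollary are identical to those of the proposition, so its conclusion is available verbatim.

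Next I would apply the contrapositive of Theorem \ref{ACPPMid}. That theorem states that if $\mathcal{K}(Z,W)$ is an M-ideal in $\mathcal{L}(Z,W)$, then $(Z,W)$ has the ACPP; equivalently, if $(Z,W)$ does not have the ACPP, then $\mathcal{K}(Z,W)$ is not an M-ideal in $\mathcal{L}(Z,W)$. Specializing to $Z = X \oplus_p \mathbb{R}$ and $W = Y \oplus_q \mathbb{R}$, the failure of the ACPP just deduced forces $\mathcal{K}(X \oplus_p \mathbb{R}, Y \oplus_q \mathbb{R})$ not to be an M-ideal in $\mathcal{L}(X \oplus_p \mathbb{R}, Y \oplus_q \mathbb{R})$, which is exactly the assertion of the corollary.

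I expect no genuine obstacle in this step, since the entire analytic difficulty (constructing the operator $S$ with $S(x,a) = (Tx,a)$ and checking that $S^*$ misses its norm while $\lVert S \rVert_e < \lVert S \rVert$) resides in the proposition rather than in the corollary. The only point requiring care is purely formal: confirming that the spaces and the non-norm-attainment hypothesis line up so that both the proposition and Theorem \ref{ACPPMid} apply to the same pair $(X \oplus_p \mathbb{R}, Y \oplus_q \mathbb{R})$. Once that alignment is noted, the proof closes in a single sentence via the contrapositive of the cited M-ideal-implies-ACPP implication.
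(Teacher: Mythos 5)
Your proposal is correct and matches the paper's intent exactly: the corollary is stated without proof precisely because it follows immediately from the preceding proposition combined with the contrapositive of Theorem \ref{ACPPMid}. Nothing further is needed.
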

\section{Property $(M)$ and the weak maximizing property}\label{section3}

Recently, R.M. Aron, D. Garc\'ia, D. Pellegrino and E.V. Teixeira introduced the WMP for a pair $(X, Y)$ of Banach spaces $X$ and $Y$, and proved that if $(X,Y)$ has the WMP, then it has the CPP and $X$ is reflexive (see \cite{AGPT}). The most typical examples of pairs having the WMP are the pair $(H,H)$ of a Hilbert space $H$ and the pair $(\ell_p,\ell_q)$ for $p,q\in(1,\infty)$. We refer the readers to \cite{DJM,GP,GMA} for more recent results on the WMP and the CPP.

Motivated by the fact that the CPP implies the ACPP, we ask the following questions:

\begin{itemize}
\item Is $\mathcal{K}(X,Y)$ an M-ideal in $\mathcal{L}(X,Y)$ if $(X,Y)$ has the WMP?

\item Does $(X,Y)$ have the WMP if $\mathcal{K}(X,Y)$ is an M-ideal in $\mathcal{L}(X,Y)$ and $X$ is reflexive?
\end{itemize}

To give an answer for the later question, we consider $(\mathbb{R}\oplus_\infty \ell_2,c_0)$. According to \cite[Remark 3.2]{GP}, this pair does not have the WMP. However, $\mathcal{K}(\mathbb{R}\oplus_\infty \ell_2,c_0)$ is an M-ideal in $\mathcal{L}(\mathbb{R}\oplus_\infty \ell_2,c_0)$. In the present section, we give a  negative answer for the other question. Nevertheless, we obtain some results on the WMP that are related to property $(M)$ and M-ideal of compact operators.

Among many variations of property $(M)$, we use the one for contractions introduced by N. J. Kalton and D. Werner in \cite{KW} (see section \ref{section1} for its definition). Note that $(X,X)$ has property $(M)$ if and only if $X$ has property $(M)$ that means $id_X$ has property $(M)$ (see \cite{K1}). The authors employed this property to find a pair $(X,Y)$ such that $\mathcal{K}(X,Y)$ is an M-ideal in $\mathcal{L}(X,Y)$, and they proved, for example, that $\mathcal{K}(\ell_p,L_p[0,1])$ is an M-ideal in $\mathcal{L}(\ell_p,L_p[0,1])$ for $2\leq p <\infty$. 

Motivated by property $(M)$, we introduce property \textbf{strict $\bf (M)$} to study the WMP. For Banach spaces $X$ and $Y$, we shall say that a contraction $T\in \mathcal{L}(X,Y)$ has property strict $(M)$ if elements $x\in X$ and $y\in Y$ satisfy $\lVert y\rVert < \lVert x\rVert$, then
 $$\limsup_n\lVert y+Tx_n\rVert < \limsup_n\lVert x+x_n\rVert$$
for every weakly null sequence $(x_n) \subset X$. We also say that a pair $(X,Y)$ has property strict $(M)$ if every contraction $T\in \mathcal{L}(X,Y)$ has property strict $(M)$. 

\begin{theorem}\label{MWMP}
For Banach spaces $X$ and $Y$, if $X$ is reflexive and $(X,Y)$ has property strict $(M)$, then $(X,Y)$ has the WMP.
\end{theorem}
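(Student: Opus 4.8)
The plan is to normalize $T$ and then use property strict $(M)$ in contrapositive form to force the weak limit of a maximizing sequence to be a norm-attaining direction. First I would assume without loss of generality that $\|T\|=1$: if $T=0$ it attains its norm trivially, and otherwise replacing $T$ by $T/\|T\|$ changes neither the hypothesis of possessing a non-weakly null maximizing sequence nor the conclusion of attaining the norm, while making $T$ a contraction so that property strict $(M)$ is applicable to it.

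Then, let $(x_n)\subset B_X$ be a maximizing sequence that is not weakly null, so $\lim_n\|Tx_n\|=1$. Since $(x_n)$ fails to converge weakly to $0$, there exist $x^*\in X^*$ and $\eps>0$ with $|x^*(x_n)|\geq\eps$ along some subsequence; passing to it keeps the sequence maximizing. As $X$ is reflexive, its closed unit ball is weakly sequentially compact, so I may extract a further subsequence---still written $(x_n)$---converging weakly to some $x\in B_X$. Testing against $x^*$ yields $|x^*(x)|\geq\eps$, whence $x\neq 0$. Writing $z_n=x_n-x$, the sequence $(z_n)$ is weakly null.

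The decisive observation is that $x_n=x+z_n$, so $Tx_n=Tx+Tz_n$; consequently $\limsup_n\|Tx+Tz_n\|=\lim_n\|Tx_n\|=1$, whereas $\limsup_n\|x+z_n\|=\limsup_n\|x_n\|\leq 1$. Suppose toward a contradiction that $\|Tx\|<\|x\|$. Applying property strict $(M)$ to the contraction $T$ with the pair $(y,x)=(Tx,x)$ and the weakly null sequence $(z_n)$ gives
$$1=\limsup_n\|Tx+Tz_n\|<\limsup_n\|x+z_n\|\leq 1,$$
which is impossible. Hence $\|Tx\|=\|x\|$, and since $x\neq 0$ the vector $x/\|x\|\in S_X$ satisfies $\|T(x/\|x\|)\|=\|Tx\|/\|x\|=1=\|T\|$; thus $T$ attains its norm.

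The step I expect to demand the most care is producing a \emph{nonzero} weak limit. The hypothesis only supplies a single functional witnessing failure of weak nullity, so I must first thin $(x_n)$ to a subsequence along which that functional is bounded away from $0$, and only afterwards invoke reflexivity to reach a weakly convergent subsequence, verifying that its limit inherits the strict lower bound (and is therefore nonzero) and that the subsequence is still maximizing. Once the limit $x\neq 0$ is secured, the identity $Tx_n=Tx+Tz_n$ makes the invocation of strict $(M)$ essentially immediate, and the strictness in the hypothesis is exactly what rules out the defective case $\|Tx\|<\|x\|$.
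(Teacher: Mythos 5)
Your proof is correct and follows essentially the same route as the paper's: normalize $T$, extract a weakly convergent subsequence with nonzero limit $x$ via reflexivity, decompose $x_n = x + z_n$ with $(z_n)$ weakly null, and apply property strict $(M)$ to the pair $(Tx, x)$ to rule out $\|Tx\| < \|x\|$. Your extra care in justifying that the weak limit can be taken nonzero (thinning along a witnessing functional before invoking weak sequential compactness) fills in a step the paper passes over with "passing to a subsequence," but the argument is the same.
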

\begin{proof}
For $T\in \mathcal{L}(X,Y)$, let $(x_n) \subset B_X$ be a non-weakly null maximizing sequence of $T$. Without loss of generality, we assume that $\|T\|=1$. Passing to a subsequence, we also assume that $x_n$ converges weakly to a non-zero element $x$ of $B_X$, and define a weakly null sequence $(w_n)$  by $w_n = x_n -x$.

If $T$ does not attain its norm, then $\lVert Tx \rVert < \lVert x \rVert$. Hence, we have
$$1=\limsup_{n}{\lVert T x_n \rVert} = \limsup_n {\lVert Tx + Tw_n\rVert}<\limsup_n{\lVert x + w_n \rVert }=\limsup_n{\lVert x_n \rVert} \leq 1$$
which leads to a contradiction.
\end{proof}

We do not know whether the converse of theorem \ref{MWMP} is true or not. However, it holds if $Y=c_0$.
\begin{theorem}\label{thmc0}
For  a reflexive Banach space $X$, $(X,c_0)$ has the WMP if and only if it has property strict $(M)$.
\end{theorem}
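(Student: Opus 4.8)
The plan is to prove the two directions separately. The forward direction (property strict $(M)$ implies the WMP for reflexive $X$) is already Theorem~\ref{MWMP}, so the real content of the equivalence is the converse: assuming $(X,c_0)$ has the WMP, I would derive property strict $(M)$. So I would begin by fixing a contraction $T\in\mathcal{L}(X,c_0)$, elements $x,y$ with $\lVert y\rVert<\lVert x\rVert$, and a weakly null sequence $(x_n)\subset X$, and I would argue by contraposition: assuming that $\limsup_n\lVert y+Tx_n\rVert \geq \limsup_n\lVert x+x_n\rVert$, I want to manufacture an operator $S\in\mathcal{L}(X,c_0)$ that has a non-weakly null maximizing sequence but does not attain its norm, contradicting the WMP.

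The key idea is to exploit the $c_0$ structure. Writing $T=(t_k^*)_k$ with $t_k^*\in X^*$ and $t_k^*\to 0$ weak$^*$ (this weak$^*$-null property of the coordinate functionals is precisely what makes $Tx_n$ behave coordinatewise like a weakly null sequence in $c_0$), the norm $\lVert y+Tz\rVert_\infty$ is controlled by a supremum over coordinates. First I would normalize so that $\lVert x\rVert=1$ and pass to a subsequence so that both $\lVert y+Tx_n\rVert$ and $\lVert x+x_n\rVert$ converge. The operator $S$ I would construct sends $x$ (suitably) to a value comparable to $\lVert x\rVert$ while sending the tail sequence $(x_n)$ to copies of $Tx_n$ realized on disjoint late coordinates of $c_0$; concretely, one augments $T$ with a new first coordinate functional that reads off the $x$-component, so that $Sx_n$ has norm tending to $\max\{\lvert\text{(first coord of }x)\rvert,\ \limsup\lVert Tx_n\rVert\}$. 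The point of working in $c_0$ is that the coordinatewise structure lets me place the "$x$-reading" coordinate and the "$Tx_n$" coordinates so that no single point of $B_X$ can simultaneously saturate the norm, forcing non-attainment while keeping $(x_n)$ maximizing.

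The step I expect to be the main obstacle is producing the operator $S$ so that it genuinely fails to attain its norm while certifying $\lVert S\rVert$ via the non-weakly-null sequence. Non-attainment in $c_0$ is delicate: a supremum over finitely many coordinates is typically attained, so the norm must be forced out to infinity along the coordinates indexed by $n$. I would handle this by arranging the coordinate functionals of $S$ so that the supremum defining $\lVert S\rVert$ is approached only along the sequence $(x_n)$ and never achieved at a single $w\in B_X$, using reflexivity of $X$ (hence weak compactness of $B_X$) together with the weak$^*$-nullity of $T$'s coordinates to rule out attainment: any putative maximizer $w$ would, after passing to a weak limit, lose the contribution coming from the infinitely many tail coordinates. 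Reflexivity enters exactly here, guaranteeing that a maximizing sequence for $S$ has a weakly convergent subsequence whose weak limit exposes the loss of norm on the tail.

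Finally I would assemble the pieces: the inequality $\limsup_n\lVert y+Tx_n\rVert\geq\limsup_n\lVert x+x_n\rVert$ feeds directly into showing that the constructed $S$ has $\limsup_n\lVert Sx_n\rVert=\lVert S\rVert$ with $(x_n)$ non-weakly null (its weak limit is the nonzero $x$), yet $S$ attains its norm nowhere, contradicting the WMP of $(X,c_0)$. Hence the inequality must have been strict, giving property strict $(M)$. I would keep careful track of the normalizations $\lVert y\rVert<\lVert x\rVert$ so that the strict gap survives the construction, since it is this strict inequality, rather than the non-strict one of ordinary property $(M)$, that the WMP is equivalent to in the $c_0$ setting.
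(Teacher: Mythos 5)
Your overall strategy --- contradict the WMP by producing an operator into $c_0$ that has a non-weakly null maximizing sequence yet does not attain its norm, exploiting the weak$^*$-nullity of the coordinate functionals --- is the same strategy the paper uses, but the construction you sketch has a genuine gap at exactly the step you flag as the main obstacle. The operator you propose (augment $T$ with a new first coordinate functional that ``reads off the $x$-component'') will typically \emph{attain} its norm: since $X$ is reflexive, every functional $x^*\in X^*$ attains its norm on $B_X$, so if the augmenting coordinate dominates, $S$ attains its norm at a normalizing point of $x^*$, and if the $T$-coordinates dominate you have gained nothing over $T$ itself, which may well be norm-attaining (the definition of property strict $(M)$ quantifies over \emph{all} contractions, including norm-attaining ones). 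Your proposed mechanism for ruling out attainment --- ``any putative maximizer $w$ would, after passing to a weak limit, lose the contribution coming from the tail coordinates'' --- does not parse: a maximizer is a single point, and for a fixed $w$ the coordinate values $t_k^*(w)$ already tend to $0$, so the supremum over coordinates is always achieved at some finite index; non-attainment of $\|S\|=\sup_{w\in B_X}\|Sw\|$ must instead be forced by the structure of the coordinate functionals themselves. (There is also a small but real slip: you call $(x_n)$ the non-weakly null maximizing sequence ``with weak limit $x$,'' but $(x_n)$ is the weakly null sequence from the definition; you must mean $(x+x_n)/\|x+x_n\|$.)

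The paper closes this gap with a two-step argument you would need to reproduce in some form. First, it proves the claim only for contractions $T$ that do \emph{not} attain their norm: there, using $\lim_n\|z+Tx_n\|=\|(\|z\|,\limsup_n\|Tx_n\|)\|_\infty$ and a case distinction on whether $\lim_n\|y+Tx_n\|$ equals $\|y\|$ or $\limsup_n\|Tx_n\|$, one gets $\limsup_n\|Tx_n\|\geq\lim_n\|x+x_n\|$ and hence that $(x+x_n)/\|x+x_n\|$ is already a non-weakly null maximizing sequence of $T$ itself --- no auxiliary operator is needed, and the WMP gives the contradiction directly. Second, for a general contraction $T=\sum_i x_i^*\otimes e_i$, it discards the coordinates with $\|x_i^*\|$ small (these cannot carry $\limsup_n\|Tx_n\|$, which is at least $\|x\|>0$) and replaces the remaining functionals by $(1-\tfrac1i)\tfrac{x_i^*}{\|x_i^*\|}$. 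This damping of \emph{normalized} functionals is the key device: it makes $\|S\|=1$ unattainable (each coordinate contributes strictly less than $1$, and the normalized functionals are still weak$^*$ null because the norms $\|x_i^*\|$ are bounded below on the retained set), while a tail estimate shows $\limsup_n\|Sx_n\|\geq\limsup_n\|Tx_n\|$, so $S$ is a non-attaining contraction failing property strict $(M)$ (take $y'=0$, $x'=x$), contradicting the first step. Without something equivalent to this normalize-and-damp reduction, your proof does not go through.
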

\begin{proof} From theorem \ref{MWMP}, it is enough to prove the necessity. Indeed, assume that $(X,c_0)$ has the WMP. We first claim that every contraction which does not attain its norm has property strict $(M)$. If not, there are elements $x\in X$ and $y\in c_0$ with $\lVert y\rVert<\lVert x\rVert$, a contraction $T\in \mathcal{L}(X,c_0)$ which does not attain its norm and a weakly null sequence $(x_n)$ in $X$ such that
$$\limsup_n\lVert y+Tx_n\rVert \geq \limsup_n\lVert x+x_n\rVert.$$
Passing to a subsequence, we assume that $\lim_n \|y+Tx_n\|=\limsup_n\lVert y+Tx_n\rVert$ and $\lim_n\|x+x_n\|=\limsup_n\|x+x_n\|$. 

We note that for any $z\in c_0$
$$\lim_n\|z+Tx_n\|=\left\|\left(\|z\|,\limsup_n\|Tx_n\|\right)\right\|_\infty.$$

 If $\lim_n\lVert y+Tx_n\rVert=\lVert y\rVert$, then we have
\begin{equation}\label{eqnc0}
\lim_n\lVert y+Tx_n\rVert=\lVert y\rVert<\lVert x\rVert\leq \lim_n\lVert x+x_n\rVert
\end{equation}
which leads to a contradiction. Hence, we see $\lim_n\lVert y+Tx_n\rVert=\limsup_n\lVert Tx_n\rVert$, and this shows that
$$\lim_n\lVert x+x_n\rVert\leq\lim_n\lVert y+Tx_n\rVert=\limsup_n\lVert Tx_n\rVert\leq \limsup_n\lVert Tx+Tx_n\rVert\leq\lim_n\lVert x+x_n\rVert.$$
Passing to a subsequence, we see that $\frac{x+x_n}{\lVert x+x_n\rVert}$ is a non-weakly null maximizing sequence of $T$. From the WMP, we get that $T$ attains its norm which leads to a contradiction.

 We now show that every contraction has property strict $(M)$. Similarly to the above, assume that there is a contraction $T\in \mathcal{L}(X,c_0)\setminus\{0\}$, elements $x\in X$ and $y\in c_0$ with $\lVert y\rVert<\lVert x\rVert$ and a weakly null sequence $(x_n)$ in $X$ such that both limits $\lim_n\lVert y+Tx_n\rVert$ and $\lim_n\lVert x+x_n\rVert$ exist and
$$\lim_n\lVert y+Tx_n\rVert \geq \lim_n\lVert x+x_n\rVert.$$
As is the case with \eqref{eqnc0}, we have
$$
\limsup_n\lVert Tx_n\rVert\geq\lim_n\lVert x+x_n\rVert
$$
and this shows that 
$$
\limsup_n\lVert x_n\rVert\geq \limsup_n\lVert Tx_n\rVert\geq\|x\|>0.
$$

It is obvious that $T$ is of the form $T=\sum_{i=1}^\infty x_i^*\otimes e_i$ for a weakly* null sequence  $(x_i^*)$ in $X^*$ such that $\sup_i\lVert x_i^*\rVert=\lVert T\rVert$ where $(e_i)$ is the canonical basis of $c_0$.

For a set $A=\left\{i\in \mathbb{N}~:~\|x_i^*\|<\frac{\|x\|}{2\limsup_i\|x_i\|}\right\}$, we define contractions $T_1$ and $T_2$ in $\mathcal{L}(X,c_0)$ by 
$$T_1=\sum_{i\in A}  x_i^*\otimes e_i \text{~and~} T_2=\sum_{i\notin A}  x_i^*\otimes e_i.$$

It is clear that $T=T_1+T_2$ and $\|Tz\|=\left\|\left(\|T_1z\|,\|T_2z\|\right)\right\|$ for every $z\in X$, and these show that 
$$\limsup_n \|Tx_n\|=\limsup_n\|T_2x_n\|$$
 since $\limsup_n \|T_1x_n\|<\|x\|$. Define $S\in \mathcal{L}(X,c_0)$ by 
$$
S=\sum_{i\notin A}  \left(1-\frac{1}{i}\right)\frac{1}{\|x_i^*\|}x_i^*\otimes e_i.
$$
We list a few facts that are obvious.
\begin{enumerate}[(a)]
\item A set $A^c$ is infinite.
\item A sequence $\left(\left(1-\frac{1}{i}\right)\frac{1}{\|x_i^*\|}x_i^*\right)$ is weakly* null.
\item An operator $S$ does not attain its norm and $\|S\|=1$.
\end{enumerate}

Hence, to get a desired contradiction, it is enough to show that $\limsup_n \|Sx_n\|\geq \limsup_n \|T_2x_n\|$ since $S$ is a contraction which does not attain its norm and not have property strict $(M)$ if it is true.

 For arbitrary $\varepsilon>0$, take $i_0\in A^c$ such that $ \frac{1}{i_0} <\varepsilon$ and $n_0\in \mathbb{N}$ such that $\left\|\left(\sum_{i\leq i_0}  x_i^*\otimes e_i\right)(x_n)\right\|<\varepsilon$ for every $n\geq n_0$.
Then, for every $n\geq n_0$, we see that
\begin{align*}
\|Sx_n\|
&= \sup_{i\in A^c} \left|\left(1-\frac{1}{i}\right)\frac{1}{\|x_i^*\|}x^*_i(x_n)\right|\\
&\geq (1-\varepsilon)\sup_{i\in A^c,~i> i_0} \left|x^*_i(x_n)\right| \\
&\geq (1-\varepsilon)(\|T_2x_n\|-\varepsilon)\\
&\geq \|T_2x_n\|-\varepsilon\|T\|\sup_m\|x_m\|-(1-\varepsilon)\varepsilon
\end{align*}
which finishes the proof.
\end{proof}
 Similarly to the case with property $(M)$, property strict $(M)$ can be defined for a Banach space. We say that a Banach space $X$ has property strict $(M)$ if $id_X$ has property strict $(M)$, equivalently if elements $x\in X$ and $y\in Y$ satisfy $\lVert y\rVert < \lVert x\rVert$, then
 $$\limsup_n\lVert y+x_n\rVert < \limsup_n\lVert x+x_n\rVert$$
for every weakly null sequence $(x_n) \subset X$. 

To investigate property strict $(M)$, we recall the \textbf{Opial property}. We say that a Banach space $X$ has the Opial property if every non-zero element $x\in X$ and every weakly null sequence $(x_n)$ in $X$ satisfy 
$$\limsup_n\lVert x_n\rVert <\limsup_n\lVert x+x_n\rVert.$$

Z. Opial \cite{Op} showed that every Hilbert space has the Opial property to study the weak convergence of $(T^nx)$ for a nonexpansive selfmapping $T$ of a closed convex subset $C$ and $x\in C$. There has been intensive study in this direction, and we mention a result of D. van Dulst that every  separable Banach space can be renormed to have the Opial property \cite{Dul}.

As is the case with property $(M)$, we introduce the operator version of the Opial property, and we shall say \textbf{property $\bf (O)$} for the convenience. A contraction $T\in\mathcal{L}(X,Y)$ has property $(O)$ if every non-zero element $x\in X$ and every weakly null sequence $(x_n)$ in $X$ satisfy
$$\limsup_n\lVert Tx_n\rVert<\limsup_n\lVert x+x_n\rVert.$$
We also say that a pair $(X,Y)$ has property $(O)$ if every contraction $T\in\mathcal{L}(X,Y)$ has property  $(O)$.

We see the characterization of property strict $(M)$ in terms of properties $(M)$ and $(O)$. 

\begin{lemma}\label{sp}
For Banach spaces $X$ and $Y$, a contraction $T\in\mathcal{L}(X,Y)$ has property strict $(M)$ if and only if it has both properties $(M)$ and $(O)$. In particular,  $(X,Y)$ has property strict $(M)$ if and only if it has both properties $(M)$ and $(O)$.
\end{lemma}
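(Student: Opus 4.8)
The plan is to prove the equivalence for a single contraction $T$; the ``in particular'' statement about the pair $(X,Y)$ then follows at once, since a pair has one of these properties precisely when every contraction in $\mathcal{L}(X,Y)$ does.

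For the direction that property strict $(M)$ implies both $(M)$ and $(O)$, I would extract $(O)$ simply by taking $y=0$ in the definition of strict $(M)$: for a nonzero $x\in X$ one has $\lVert 0\rVert=0<\lVert x\rVert$, so strict $(M)$ yields $\limsup_n\lVert Tx_n\rVert<\limsup_n\lVert x+x_n\rVert$, which is exactly property $(O)$. To recover $(M)$, suppose $\lVert y\rVert\leq\lVert x\rVert$. If $x=0$ then $y=0$ and the required inequality $\limsup_n\lVert Tx_n\rVert\leq\limsup_n\lVert x_n\rVert$ is immediate from $\lVert Tx_n\rVert\leq\lVert x_n\rVert$ since $T$ is a contraction. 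If $x\neq 0$, then for each $t>1$ we have $\lVert y\rVert\leq\lVert x\rVert<\lVert tx\rVert$, so strict $(M)$ applied to $tx$ gives $\limsup_n\lVert y+Tx_n\rVert<\limsup_n\lVert tx+x_n\rVert\leq\limsup_n\lVert x+x_n\rVert+(t-1)\lVert x\rVert$; letting $t\to 1^{+}$ yields the non-strict inequality of property $(M)$.

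For the converse, assume $T$ has both $(M)$ and $(O)$ and let $\lVert y\rVert<\lVert x\rVert$ (so necessarily $x\neq 0$). The key idea is a convexity decomposition that interpolates between $(M)$ and $(O)$. I would pick $\theta\in\left(\lVert y\rVert/\lVert x\rVert,\,1\right)$, which is possible because $\lVert y\rVert/\lVert x\rVert<1$. Then for every weakly null $(x_n)$ write
\[
y+Tx_n=\theta\left(\tfrac{1}{\theta}\,y+Tx_n\right)+(1-\theta)\,Tx_n ,
\]
so that $\lVert y+Tx_n\rVert\leq\theta\lVert \tfrac{1}{\theta} y+Tx_n\rVert+(1-\theta)\lVert Tx_n\rVert$. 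Since $\lVert \tfrac{1}{\theta} y\rVert=\lVert y\rVert/\theta\leq\lVert x\rVert$, property $(M)$ gives $\limsup_n\lVert\tfrac{1}{\theta} y+Tx_n\rVert\leq\limsup_n\lVert x+x_n\rVert$, while property $(O)$ gives $\limsup_n\lVert Tx_n\rVert<\limsup_n\lVert x+x_n\rVert$. Combining these (using $\limsup$ of a sum is at most the sum of $\limsup$'s, and both coefficients are nonnegative) yields $\limsup_n\lVert y+Tx_n\rVert\leq\theta\limsup_n\lVert x+x_n\rVert+(1-\theta)\limsup_n\lVert Tx_n\rVert<\limsup_n\lVert x+x_n\rVert$, which is precisely property strict $(M)$.

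The only genuinely delicate point is this converse: property $(M)$ alone always gives $\leq$ and never $<$, so the obstacle is manufacturing a strict gap without any extra hypothesis. The convexity splitting above is what resolves it, by isolating a definite $(1-\theta)$-fraction of the vector on which the strict inequality in $(O)$ can be spent, while $(M)$ controls the remaining $\theta$-fraction. With the per-contraction equivalence in hand, the final assertion for the pair $(X,Y)$ is immediate, since applying it to every contraction in $\mathcal{L}(X,Y)$ shows that all of them have strict $(M)$ exactly when all of them have both $(M)$ and $(O)$.
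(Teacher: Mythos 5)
Your proof is correct and follows essentially the same route as the paper: the converse uses the identical convexity splitting (your $\theta$ is the paper's $\tfrac{1}{1+\varepsilon}$, so $\tfrac{1}{\theta}y=(1+\varepsilon)y$), and the forward direction is the same limiting argument, except that you dilate $x$ to $tx$ where the paper shrinks $y$ to $\lambda y$. The only added value is that you treat the degenerate case $x=0$ explicitly, which the paper glosses over.
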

\begin{proof}
We first prove the necessity. Since it is obvious that property strict $(M)$ implies property $(O)$, we only prove that $T$ has property $(M)$ if $T$ has property strict $(M)$. Let elements $x\in X$ and $y\in Y$ satisfy that $\lVert y\rVert\leq \lVert x\rVert$ and $(x_n)$ be a weakly null sequence in $X$. 

For arbitrary $\lambda\in(0,1)$, we have that
$$\limsup_n\lVert \lambda y+Tx_n\rVert<\limsup_n\lVert x+x_n\rVert,$$
which shows that $T$ has property $(M)$. 

To prove the sufficiency, assume that elements $x\in X$ and $y\in Y$ satisfy $\lVert y\rVert<\lVert x\rVert$ and $(x_n)$ is a weakly null sequence in $X$. 

 For $\varepsilon>0$ such that $(1+\varepsilon)\lVert y\rVert<\lVert x\rVert$, we have
\begin{align*}
\limsup_n\lVert y+Tx_n\rVert&=\limsup_n\left\lVert\frac{1}{1+\varepsilon}[(1+\varepsilon)y+Tx_n]+\left(1-\frac{1}{1+\varepsilon}\right)Tx_n\right\rVert\\
&\leq\frac{1}{1+\varepsilon}\limsup_n\lVert(1+\varepsilon)y+Tx_n\rVert+\left(1-\frac{1}{1+\varepsilon}\right)\limsup_n\lVert Tx_n\rVert,\\
&<\frac{1}{1+\varepsilon}\limsup_n\lVert x+x_n\rVert+\left(1-\frac{1}{1+\varepsilon}\right)\limsup_n\lVert x+x_n\rVert\\
&=\limsup_n\lVert x+x_n\rVert
\end{align*}
which finishes the proof.
\end{proof}

We shall list a few basic observation on properties $(M)$ and $(O)$ which are useful to find a pair of Banach spaces having the WMP.

\begin{prop}\label{basicpro}For Banach spaces $X$ and $Y$,
\begin{enumerate}
\item $(X,X)$ has property $\mathcal{A}$  if and only if $X$ has property $\mathcal{A}$ where $\mathcal{A}$  is one of $(O)$, $(M)$ and strict $(M)$.
\item if $X$ and $Y$ have property $(M)$, then $(X,Y)$ has property $(M)$.
\item if $X$ has property $(O)$, then $(X,Y)$ has property $(O)$.
\item if $Y$ has property $(O)$ and $X$ has property $(M)$, then $(X,Y)$ has property $(O)$.
\item if $Y$ has property $(O)$ and $(X,Y)$ has property $(M)$, then $(X,Y)$ has property $(O)$.
\end{enumerate}
\end{prop}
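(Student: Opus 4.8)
The plan is to split the five assertions into a family of elementary ``perturbation-chain'' estimates that use only the contractivity of $T$ together with the defining inequalities of $(M)$ and $(O)$, and one genuinely structural implication, namely the converse in (1) for property $(M)$ and its two-space form (2). The soft directions come first. The ``only if'' parts of (1) are immediate, since $id_X$ is itself a contraction, so if $(X,X)$ has a property then $id_X$ does. For (3) I would use that a contraction satisfies $\limsup_n\|Tx_n\|\le\limsup_n\|x_n\|$, so that for $x\neq0$ the Opial inequality for $X$ gives $\limsup_n\|Tx_n\|\le\limsup_n\|x_n\|<\limsup_n\|x+x_n\|$; specialising $X=Y$ gives the $(O)$-case of the converse in (1).

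The key device for (4) and (5) is the identity $Tu+Tx_n=T(u+x_n)$, which combines with contractivity to transport an estimate from $Y$ back to $X$. For (5), given $x\neq0$ I pick any $w\in Y$ with $0<\|w\|\le\|x\|$ (the case $Y=\{0\}$ being trivial); then property $(O)$ of $Y$ and property $(M)$ of the contraction $T$, available since $(X,Y)$ has $(M)$, yield
\[\limsup_n\|Tx_n\|<\limsup_n\|w+Tx_n\|\le\limsup_n\|x+x_n\|.\]
For (4) I instead choose $y'\in X$ with $\|y'\|\le\|x\|$ and $Ty'\neq0$ (possible whenever $T\neq0$, the case $T=0$ being settled by weak lower semicontinuity of the norm, which gives $\|x\|\le\limsup_n\|x+x_n\|$), and chain
\[\limsup_n\|Tx_n\|<\limsup_n\|Ty'+Tx_n\|=\limsup_n\|T(y'+x_n)\|\le\limsup_n\|y'+x_n\|\le\limsup_n\|x+x_n\|,\]
using $(O)$ of $Y$, contractivity, and finally property $(M)$ of $X$. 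Once the $(M)$- and $(O)$-cases of the converse in (1) are available, the strict $(M)$-case follows formally from Lemma \ref{sp}.

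The remaining and main difficulty is the converse in (1) for $(M)$, that property $(M)$ of $id_X$ forces every contraction on $X$ to have property $(M)$, together with its generalisation (2). Encoding property $(M)$ of a space $Z$ by the asymptotic-norm profile $\phi^Z_{(z_n)}(t):=\limsup_n\|z+z_n\|$, which is well defined for $\|z\|=t$ by direction independence and is non-decreasing, part (2) amounts to the profile comparison $\phi^Y_{(Tx_n)}\le\phi^X_{(x_n)}$. Here contractivity only delivers $\phi^Y_{(Tx_n)}(\|Tu\|)\le\phi^X_{(x_n)}(\|u\|)$ at the range values $\|Tu\|$, and since preimages have larger norm this is strictly weaker than what is needed; the statement genuinely constrains the admissible contractions, as one sees for $(\ell_4,\ell_2)$, where Pitt's theorem forces every operator to be compact, so $Tx_n\to0$ and the profile collapses. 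Hence I do not expect a purely formal argument here: this is where I would invoke the structural theory of property $(M)$ of Kalton \cite{K1} (which is exactly the converse of (1) for $(M)$) and of Kalton and Werner \cite{KW} (yielding (2)). This profile comparison is the step I expect to be the main obstacle.
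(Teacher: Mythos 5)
Your proposal is correct and follows essentially the same route as the paper: the chains in (3)--(5) are the paper's arguments (up to choosing the auxiliary element with $\|z\|=\|x\|$ rather than $\|y'\|\le\|x\|$), and for the sufficiency of (1) for $(M)$ and for (2) the paper likewise does not give a self-contained argument but cites \cite[Lemma 2.2]{K1} and \cite[p.~171]{KW}, exactly the references you identify. Your derivation of the strict $(M)$ case of (1) from Lemma \ref{sp} is a harmless variant of the paper's direct two-step estimate.
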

\begin{proof}
(1) Note that the necessity of each statements is straightforward by their definition of properties $(O)$, $(M)$ and strict $(M)$. The statement of the sufficiency for $\mathcal{A}=(O)$ is obvious, and that for $\mathcal{A}=(M)$ is proved in \cite[Lemma 2.2]{K1}.

 To prove the one for  $\mathcal{A}=\text{strict~}(M)$,  fix elements $x,y\in X$ such that $\|y\|<\|x\|$, a contraction $T\in \mathcal{L}(X,Y)$ and a weakly null sequence $(x_n)\subset X$. From lemma \ref{sp}, we have that $id_X$ has property $(M)$ that is equivalent to $(X,X)$ has property $(M)$. Hence, we have 
$$\limsup_n\lVert y + id_X\circ Tx_n\rVert<\limsup_n\lVert x+  id_X\circ Tx_n\rVert\leq  \limsup_n\lVert x+x_n\rVert$$
which shows that $(X,X)$ has property strict $(M)$.

(2) This is observed in \cite[P. 171]{KW} without the proof, and it is followed by the slight modification of the proof of \cite[Lemma 2.2]{K1}.

(3) For a non-zero element $x\in X$, a contraction $T\in \mathcal{L}(X,Y)$ and a weakly null sequence $(x_n)\subset X$, we have
$$\limsup_n\lVert Tx_n\rVert\leq\limsup_n\lVert x_n\rVert<\limsup_n\lVert x+x_n\rVert$$
which shows that $(X,Y)$ has property $(O)$.

(4) Take a non-zero element $x\in X$, a contraction $T\in \mathcal{L}(X,Y)$ and a weakly null sequence $(x_n)\subset X$. Without loss of generality, we assume that $T\neq 0$, and  take $z\in X$ such that $\|Tz\|\neq 0$ and $\|z\|=\|x\|$. Then, we see that
$$\limsup_n\lVert Tx_n\rVert<\limsup_n\lVert Tz+Tx_n\rVert\leq\limsup_n\lVert z+x_n\rVert= \limsup_n\lVert x+x_n\rVert$$
which shows that $(X,Y)$ has property $(O)$.

(5) Take a non-zero element $x\in X$, a contraction $T\in \mathcal{L}(X,Y)$ and a weakly null sequence $(x_n)\subset X$. For a non-zero element $y\in Y$ such that $\|y\|\leq \|x\|$, we see that
$$\limsup_n\lVert Tx_n\rVert<\limsup_n\lVert y+Tx_n\rVert\leq \limsup_n\lVert x+x_n\rVert$$
which shows that $(X,Y)$ has property $(O)$.
\end{proof}

It is obvious that $\ell_p$ for $1\leq  p<\infty$ has both properties $(M)$ and $(O)$ and that $c_0$ has property $(M)$. Hence, proposition \ref{basicpro} shows that the following examples have the WMP.
\begin{example}\label{firstexample} For $1<p<\infty$ and $1\leq q<\infty$, pairs $(\ell_p,\ell_q)$ and $(\ell_p,c_0)$ have the WMP.
\end{example}

It is worth to remark that properties $(M)$ and $(O)$ for $X$ are hereditary properties. Indeed, if a Banach space $X$ has property $(M)$ or $(O)$, then so does every subspace $Y$ of $X$. This gives us the aforementioned example, a pair $(Z,Z)$ has the WMP but $\mathcal{K}(Z)$ is not an M-ideal in $\mathcal{L}(Z)$ by the famous Enflo-Davie subspace $Z$ of $\ell_p$ for $1<p<\infty$ which does not have the compact approximation property  (in short CAP) (see \cite{LT}). Hence, we see that an M-ideal of compact operators and the WMP are not related in general. However, if $Y$ is a Banach space such that $\mathcal{K}(\ell_p,Y)$ is an M-ideal in $\mathcal{L}(\ell_p,Y)$, then $(\ell_p,Y)$ has the WMP. It is followed by proposition \ref{basicpro} and the fact that, for $1<p<\infty$, $\mathcal{K}(\ell_p,Y)$ is an M-ideal in $\mathcal{L}(\ell_p,Y)$ if and only if $(\ell_p,Y)$ has property $(M)$ (see \cite{KW}). Actually, N. J. Kalton and D. Werner had dealt with more general setting in \cite{KW}, but we only state this for the convenience.

\begin{cor}\label{MimplyWMP}For a Banach space $Y$ and $1<p<\infty$, if $\mathcal{K}(\ell_p,Y)$ is an M-ideal in $\mathcal{L}(\ell_p,Y)$, then $(\ell_p,Y)$ has the WMP.
\end{cor}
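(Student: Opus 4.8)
The plan is to assemble three ingredients already in place in this section: the reflexivity of $\ell_p$, the characterization of property strict $(M)$ in terms of properties $(M)$ and $(O)$ from Lemma \ref{sp}, and the Kalton--Werner identification of the M-ideal condition with property $(M)$ recorded just before the statement. The strategy is to verify that the pair $(\ell_p,Y)$ has property strict $(M)$ and then invoke Theorem \ref{MWMP}.

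First I would observe that $\ell_p$ is reflexive for $1<p<\infty$, so the reflexivity hypothesis of Theorem \ref{MWMP} is automatic and it suffices to show that $(\ell_p,Y)$ has property strict $(M)$. By Lemma \ref{sp}, this reduces to checking that $(\ell_p,Y)$ has both property $(M)$ and property $(O)$.

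For property $(M)$, I would invoke the Kalton--Werner theorem cited above: since $1<p<\infty$ and $\mathcal{K}(\ell_p,Y)$ is an M-ideal in $\mathcal{L}(\ell_p,Y)$ by hypothesis, the pair $(\ell_p,Y)$ has property $(M)$. For property $(O)$, I would use that $\ell_p$ itself has property $(O)$, which is the classical Opial property of $\ell_p$ and is noted as obvious in the text, and then apply Proposition \ref{basicpro}(3), which states that whenever the domain space $X$ has property $(O)$, the pair $(X,Y)$ has property $(O)$ for every $Y$. Combining the two through Lemma \ref{sp} gives property strict $(M)$ for $(\ell_p,Y)$, and Theorem \ref{MWMP} then delivers the WMP.

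There is no genuine obstacle here; the corollary is a direct synthesis of the machinery developed in the section, with each arrow supplied by a previously established result. The only point deserving a moment of care is that the Kalton--Werner equivalence be applied in the correct generality, namely for an arbitrary target $Y$ rather than merely $Y=\ell_p$, but the paper has already recorded this equivalence for $1<p<\infty$ and general $Y$, so the argument closes immediately.
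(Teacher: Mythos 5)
Your argument is correct and follows exactly the route the paper takes: the Kalton--Werner equivalence gives property $(M)$ for $(\ell_p,Y)$, the Opial property of $\ell_p$ together with Proposition \ref{basicpro}(3) gives property $(O)$, Lemma \ref{sp} upgrades the pair to property strict $(M)$, and Theorem \ref{MWMP} with the reflexivity of $\ell_p$ yields the WMP. Nothing is missing.
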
 

In fact, pairs $(\ell_p,\ell_q)$ and $(\ell_p,c_0)$ in example \ref{firstexample} are previously known ones having the WMP (see \cite{AGPT,GP}). In the following, we have new ones, pairs $(\ell_p,Y)$ given in \cite{KW} such that $\mathcal{K}(\ell_p,Y)$ is an M-ideal in $\mathcal{L}(\ell_p,Y)$. 

\begin{example}\label{secondexample}For $2\leq p<\infty$ and $1<q\leq 2$, the following pairs have the WMP.
\begin{enumerate}
\item $(\ell_p,L_p[0,1])$.
\item $(\ell_p,c_p[0,1])$ where $c_p$ is the Schatten class.
\item $(\ell_q,UT_q)$ where $UT_q$ is the subspace of $c_q$ consisting of operators whose matrix representations with respect to the unit vector basis in $\ell_2$ are upper trigonal.
\end{enumerate}
\end{example}

K. John and D. Werner showed in \cite{JW} that for a subspace $X$ of a quotient of $L_p[0,1]$ or $c_p$ ($2 \leq p < \infty$) having a $1$-unconditional Schauder decomposition or merely the unconditional MCAP, $(X, \ell_p)$ has property $(M)$. Hence, we also have the following example from proposition \ref{basicpro}.

\begin{example}\label{thirdexample}For $2\leq p<\infty$, if  a subspace $X$ of a quotient of $L_p[0,1]$ or $c_p$ ($2 \leq p < \infty$) having a $1$-unconditional Schauder decomposition or merely the unconditional MCAP, then $(X,\ell_p)$ has the WMP.
\end{example}

Very recently, L. C. Garc\'{\i}a-Lirola and C. Petitjean studied the WMP with moduli of asymptotic uniform convexity and smoothness (see \cite{GP}). The \textbf{modulus of asymptotic uniform convexity} of $X$ is a function $\bar{\delta}_X$ on $[0,\infty)$ given by 
$$\bar{\delta}_X(t) = \inf_{x\in S_X} \bar{\delta}_X(x,t) \text{~where~} \bar{\delta}_X(x,t)=\sup_{\dim(X/Y)<\infty}\inf_{y\in Y,\|y\|\geq t}\lVert x+y\rVert-1.$$
The \textbf{modulus of asymptotic uniform smoothness} of $X$ is a function $\bar{\rho}_X$ on $[0,\infty)$ given by
$$\bar{\rho}_X(t)=\sup_{x\in S_X} \bar{\rho}_X(x,t) \text{~where~} \bar{\rho}_X(x,t)=\inf_{\dim(X/Y)<\infty}\sup_{y\in Y,\|y\|\leq t}\lVert x+y\rVert-1.$$

If $X$ is a dual space, we can consider different moduli by taking all finite-codimensional weak* closed subspaces $Y$ of $X$, and these moduli are denoted by $\bar{\delta}^*_X$ and $\bar{\rho}^*_X$ respectively. A Banach space $X$ is said to be asymptotically uniform convex if $\bar{\delta}_X(t)>0$ for every $t>0$ and $X$ is said to be asymptotically uniform smooth if $\frac{\bar{\rho}_X(t)}{t} \to 0$ as $t\to0$. There has been intensive study on these moduli and we refer the reader to \cite{JLDS} for more  information. We note a few facts which can be found in this reference that $\bar{\delta}_X(t)$ and $\bar{\rho}_X(t)$ are non-decreasing Lipschitz functions with Lipschitz constant at most $1$, and the inequality $\bar{\delta}_X\leq \bar{\rho}_X$ holds. We also present the following useful inequalities. We omit their proofs since they must be known (see for example \cite[Proposition 2.2]{GP}).

\begin{rem}\label{easyinequality}
For a Banach space $X$, every bounded weakly null net $(x_\alpha)$ in $X$ and non-zero element $x$ in $X$ satisfy the following inequalities.
\begin{itemize}
\item [(1)] $\liminf_\alpha\lVert x+x_\alpha\rVert \geq \lVert x\rVert+\lVert x\rVert\bar{\delta}_X\left(\frac{x}{\|x\|},\frac{\liminf_\alpha\lVert x_\alpha\rVert}{\lVert x\rVert}\right)$.
\item [(2)] $\limsup_\alpha\lVert x+x_\alpha\rVert \leq \lVert x\rVert+\lVert x\rVert\bar{\rho}_X\left(\frac{x}{\|x\|},\frac{\limsup_\alpha\lVert x_\alpha\rVert}{\lVert x\rVert}\right)$.
\end{itemize}
In particular, if $\lim_\alpha \|x_\alpha\|$ exists, then 
\begin{align*}
\|x\|+\|x\|\bar{\delta}_X\left(\frac{\lim_\alpha \|x_\alpha\|}{\|x\|}\right)\leq \liminf_\alpha\lVert x+x_\alpha\rVert \leq\limsup_\alpha\lVert x+x_\alpha\rVert \leq \|x\|+\|x\|\bar{\rho}_X\left(\frac{\lim_\alpha \|x_\alpha\|}{\|x\|}\right).
\end{align*}
\end{rem}

With properties $(M)$ and $(O)$, we analyze \cite[Theorem 3.1]{GP} which shows that if a reflexive Banach space $X$ and a Banach spaces $Y$ satisfy $\bar{\delta}_X\geq \bar{\rho}_Y$ and $\bar{\delta}_X(t)>t-1$  for every $t\geq 1$, then $(X,Y)$ has the WMP.

\begin{theorem}\label{GPanalyze} For Banach spaces $X$ and $Y$,
\begin{enumerate}
\item if $(X,Y)$ satisfies $\bar{\delta}_X\geq \bar{\rho}_Y$, then it has property $(M)$.
\item if $X$ satisfies $\bar{\delta}_X(x,t)>t-1$ for every $t\geq 1$ and $x\in S_X$, then it has property $(O)$.
\end{enumerate}
In particular, if $(X,Y)$ satisfies $\bar{\delta}_X\geq \bar{\rho}_Y$ and $\bar{\delta}_X(x,t)>t-1$ for every $t\geq 1$ and $x\in S_X$, then it has the WMP.
\end{theorem}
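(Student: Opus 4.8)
The plan is to derive the final (WMP) statement from parts (1) and (2): once $(X,Y)$ is shown to have both property $(M)$ and property $(O)$, Lemma~\ref{sp} gives property strict $(M)$, and Theorem~\ref{MWMP} then yields the WMP, provided $X$ is reflexive as that theorem requires (reflexivity is in any case the standing assumption under which the WMP can hold). So the whole statement reduces to establishing (1) and (2), and the engine in both is Remark~\ref{easyinequality}, applied in $X$ through $\bar{\delta}_X$ and in $Y$ through $\bar{\rho}_Y$. Throughout I would fix a contraction $T$ and a weakly null (hence bounded) sequence $(x_n)$, so that $(Tx_n)$ is a bounded weakly null sequence in $Y$, and after passing to a subsequence assume $a:=\lim_n\|x_n\|$ and $b:=\lim_n\|Tx_n\|$ exist with $b\le a$, since $T$ is a contraction.

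For (2) I take $x\neq 0$, set $\tau=a/\|x\|$, and split on scale. If $\tau<1$, the trivial bound $\bar{\delta}_X\ge 0$ in Remark~\ref{easyinequality}(1) already gives $\limsup_n\|Tx_n\|\le a<\|x\|\le\liminf_n\|x+x_n\|$. If $\tau\ge 1$, then Remark~\ref{easyinequality}(1) together with the hypothesis $\bar{\delta}_X(x/\|x\|,\tau)>\tau-1$ yields $\liminf_n\|x+x_n\|\ge\|x\|\bigl(1+\bar{\delta}_X(x/\|x\|,\tau)\bigr)>\|x\|\tau=a\ge\limsup_n\|Tx_n\|$. Either way the strict inequality defining property $(O)$ holds, and one sees the at-point hypothesis on $\bar{\delta}_X$ is used exactly at the scales $\tau\ge 1$.

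For (1), take $y,x$ with $\|y\|\le\|x\|$. The degenerate cases $x=0$ (which forces $y=0$) and $y=0$ reduce to $\limsup_n\|y+Tx_n\|=b\le a$, and here it suffices to note $a\le\|x\|\bigl(1+\bar{\delta}_X(a/\|x\|)\bigr)\le\liminf_n\|x+x_n\|$. For $y\neq 0$ I would apply Remark~\ref{easyinequality}(2) in $Y$ and pass to the global modulus, obtaining $\limsup_n\|y+Tx_n\|\le\|y\|\bigl(1+\bar{\rho}_Y(b/\|y\|)\bigr)$; the hypothesis $\bar{\rho}_Y\le\bar{\delta}_X$ upgrades the right-hand side to $\|y\|\bigl(1+\bar{\delta}_X(b/\|y\|)\bigr)$. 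On the other side Remark~\ref{easyinequality}(1) gives $\liminf_n\|x+x_n\|\ge\|x\|\bigl(1+\bar{\delta}_X(a/\|x\|)\bigr)$. Writing $\Phi(s,t)=s\bigl(1+\bar{\delta}_X(t/s)\bigr)$, property $(M)$ follows the moment I prove $\Phi(\|y\|,b)\le\Phi(\|x\|,a)$, that is, that $\Phi$ is nondecreasing in each variable (recall $\|y\|\le\|x\|$ and $b\le a$).

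Monotonicity of $\Phi$ in $t$ is immediate from monotonicity of $\bar{\delta}_X$, and I expect monotonicity in $s$ to be the main obstacle. A short computation shows $s\mapsto\Phi(s,t)$ is nondecreasing if and only if $u\mapsto(1+\bar{\delta}_X(u))/u$ is nonincreasing, and since $\bar{\delta}_X$ is $1$-Lipschitz and nondecreasing this is in turn equivalent to the inequality $\bar{\delta}_X(u)\ge u-1$ for all $u\ge 0$. This inequality expresses the extremality of $c_0$ for asymptotic uniform convexity; I would isolate it as a lemma, deriving it from the elementary bound $\bar{\rho}^*_{X^*}(1)\le 1$ via the Young/Legendre-type duality between $\bar{\delta}_X$ and $\bar{\rho}^*_{X^*}$ evaluated at scale $1$, the naive triangle inequality only giving the weaker $\bar{\delta}_X(u)\ge u-2$. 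With this lemma both degenerate cases close and the monotonicity of $\Phi$ is secured, completing (1); combined with (2) and the reduction in the first paragraph, this proves the WMP.
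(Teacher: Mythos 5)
Your reduction of the WMP claim to parts (1) and (2) via Lemma~\ref{sp} and Theorem~\ref{MWMP}, and your proof of part (2), coincide with the paper's (the paper phrases your case split in (2) as the single observation that $\bar{\delta}_X\geq 0$ lets one assume the hypothesis for all $t\geq 0$). The gap is in part (1), in the lemma you propose to carry the case $\lVert y\rVert<\lVert x\rVert$: the inequality $\bar{\delta}_X(u)\geq u-1$ is \emph{false} for general Banach spaces, so the duality argument you sketch cannot yield it. What is true is the extremality of $c_0$ for the modulus of asymptotic \emph{smoothness}: for infinite-dimensional $X$ one has $\bar{\rho}_X(t)\geq\max(1,t)-1$, by averaging $\lVert x+ty\rVert$ and $\lVert x-ty\rVert$ over $y\in S_Y$. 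The corresponding lower bound for $\bar{\delta}_X$ fails. For a concrete counterexample, renorm $\ell_2$ by $|||z|||=\max\left\{\tfrac{1}{2}\lVert z\rVert_2,\ \sup_{m\geq 1}|z_0-z_m|\right\}$: then $|||e_0|||=1$, the sequence $(2e_n)_{n\geq 1}$ is weakly null with $|||2e_n|||=2$, and $|||e_0+2e_n|||=\tfrac{\sqrt{5}}{2}$ for every $n$, so Remark~\ref{easyinequality}(1) forces $\bar{\delta}_X(e_0,2)\leq\tfrac{\sqrt{5}}{2}-1<1=2-1$. (This had to be so: if $\bar{\delta}_X(t)\geq t-1$ were automatic, the hypothesis of part (2) would be nearly vacuous, whereas by Proposition~\ref{aum1} it characterizes the nontrivial property $(sO)$.) Hence the monotonicity of $\Phi(s,t)=s\bigl(1+\bar{\delta}_X(t/s)\bigr)$ in $s$, which you correctly identified as the crux, is not available for free, and your proof of (1) does not close.

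The gap is repairable inside your own framework, but only by using the hypothesis of (1) rather than a general lemma: when $Y$ is infinite dimensional, $\bar{\delta}_X\geq\bar{\rho}_Y\geq\max(1,\cdot)-1$, which is exactly the inequality you need for the monotonicity of $\Phi$; the finite-dimensional $Y$ case must then be treated separately (trivially, since $T$ is then compact and $\limsup_n\lVert y+Tx_n\rVert=\lVert y\rVert$). The paper sidesteps the issue entirely with a different decomposition: for $\lVert y\rVert<\lVert x\rVert$ it writes $y=\lambda v+(1-\lambda)(-v)$ with $\lVert v\rVert=\lVert x\rVert$, bounds $\limsup_n\lVert y+Tx_n\rVert$ by $\limsup_n\max\{\lVert v+Tx_n\rVert,\lVert -v+Tx_n\rVert\}$ using convexity of the norm, and applies the equal-norm case to $v$ and $-v$; in the equal-norm case $\bar{\rho}_Y$ and $\bar{\delta}_X$ are evaluated at the same argument and compared directly, so no monotonicity in $s$ is ever invoked. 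A minor further point: when you pass to a subsequence along which $a$ and $b$ exist, you must choose it so as to realize $\limsup_n\lVert y+Tx_n\rVert$ (resp.\ $\limsup_n\lVert Tx_n\rVert$ in (2)) as a limit, as the paper does, since otherwise the inequality you obtain concerns only the subsequence.
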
 
\begin{proof}
(1) Let $T\in \mathcal{L}(X,Y)$ be a contraction, $x\in X$ and $y\in Y$ satisfy $\| y\|\leq \| x\|$ and $(x_n)$ be a weakly null sequence in $X$. 

If $\|x\|=\|y\|=0$, then we have  $$\limsup_n\lVert y+Tx_n\rVert =\limsup_n\lVert Tx_n\rVert\leq  \limsup_n\lVert x_n\rVert = \limsup_n\lVert x+x_n\rVert.$$

Assume $\|x\|=\|y\|\neq 0$, and take a subsequence $x_{n_i}$ of $x_n$ such that $\limsup_n \| y+Tx_n\|=\lim_i \| y+Tx_{n_i}\|$ and $\limsup_i \|Tx_{n_i}\|=\lim_i \|Tx_{n_i}\|$. From remark \ref{easyinequality} we have that 
\begin{align*}
\limsup_n \| y+Tx_n\|
=&\lim_i\lVert y+Tx_{n_i}\rVert \\
\leq& \|y\|+\|y\|\bar{\rho}_Y\left(\frac{\lim_i\lVert Tx_{n_i}\rVert}{\lVert y\rVert}\right)\\
\leq& \|x\|+\|x\|\bar{\delta}_X\left(\frac{\lim_i\lVert Tx_{n_i}\rVert}{\lVert x\rVert}\right)\\
\leq&\|x\|+\|x\|\bar{\delta}_X\left(\frac{\liminf_i\lVert x_{n_i}\rVert}{\lVert x\rVert}\right)\\
\leq & \liminf_i\lVert x+x_{n_i}\rVert \leq \limsup_n \| x+x_n\|.
\end{align*}

Assume $\|x\|>\|y\|$, and take $v\in Y$ such that $\|v\|=\|x\|$ and $y=\lambda v + (1-\lambda) (-v)$ for some $0\leq \lambda \leq 1$. From the above assertion, we have 
$$
\limsup_n \| y+Tx_n\|\leq \limsup_n\max\left\{\|v +Tx_n\|,\|-v + Tx_n\|\right\}\leq \limsup_n \| x+x_n\|.
$$

(2) Since it is non-negative, we assume that $\bar{\delta}_X(u,t)>t-1$ for every $t\geq 0$ and $u\in S_X$. Let $x\in X$ be a non-zero element and  $(x_n)$ be a weakly null sequence in $X$. For a subsequence $x_{n_i}$ of $x_n$ such that $\limsup_n \|x_n\|=\lim_i \|x_{n_i}\|$, we have
\begin{align*}
\frac{\limsup_n\lVert x_{n}\rVert}{\lVert x\rVert}
&=\frac{\lim_i\lVert x_{n_i}\rVert}{\lVert x\rVert}\\
&<\bar{\delta}_X\left(\frac{x}{\lVert x\rVert},\frac{\lim_i\lVert x_{n_i}\rVert}{\lVert x\rVert}\right)+1\\
&\leq\frac{1}{\lVert x\rVert}\liminf_i\lVert x+x_{n_i}\rVert\\
&\leq\frac{1}{\lVert x\rVert}\limsup_n\lVert x+x_n\rVert
\end{align*}
which finishes the proof.
\end{proof}

In \cite{K1}, N. J. Kalton showed that whenever a separable Banach space $X$ satisfies that $\mathcal{K}(X)$ is an M-ideal in $\mathcal{L}(X)$, $X$ has property $(M_p)$ for some $1<p<\infty$ if and only if there exists $1 < p <\infty$ such that $X$ has \textbf{property $\bf (m_p)$} that means every weakly null sequence $(x_n)$ and every $x\in X$ satisfy
$$\limsup_{n} \| x + x_n\|^p=\|x\|^p + \limsup_{n}  \|x_n\|^p=\|(\|x\|,\limsup_n\|x_n\|)\|_p^p.$$ In order to get this, he showed that if an infinite dimensional separable Banach space $X$ has property $(M)$ and contains no isomorphic copy of $\ell_1$, then there exists $1<p\leq \infty$ and a weakly null sequence $(x_n)\subset S_X$ such that for every element $x\in X$ and every scalar $\alpha\in\mathbb{K}$, $$\lim_{n}\|x+\alpha x_n\|=\|(\|x\|,|\alpha|)\|_p.$$

The characterization of N. J. Kalton is generalized later by E. Oja \cite{Oja1} for arbitrary Banach spaces $X$, and it is true that for $1<p\leq \infty$ a Banach space $X$ has property  $(M_p)$ if and only if it has property $m_p$ and $\mathcal{K}(X)$ is an M-ideal in $\mathcal{L}(X)$ (see theorem \ref{Ojaminf} for the definition of  property $(m_\infty)$).

As a corollary of theorem \ref{GPanalyze} and the result of N. J. Kalton, we show that every reflexive space $X$ such that $\bar{\delta}_X=\bar{\rho}_X$ has property $(O)$.

\begin{cor}\label{asyp}
Let a Banach space $X$ contain no isomorphic copy of $\ell_1$. If $\bar{\delta}_X=\bar{\rho}_X$, then $X$ has property $(m_p)$ for some $1<p\leq\infty$. In particular, every reflexive space $X$ such that $\bar{\delta}_X=\bar{\rho}_X$ has property $(O)$.
\end{cor}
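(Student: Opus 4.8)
The plan is to extract the full $\ell_p$-type asymptotic structure of $X$ by feeding the single weakly null sequence furnished by Kalton's theorem into the squeeze produced by the hypothesis $\bar{\delta}_X=\bar{\rho}_X$. First I would handle the finite-dimensional case separately: there every weakly null sequence is norm null, so $(m_p)$ holds for any $p$ and $(O)$ is immediate, and I may assume $X$ is infinite dimensional. Since $\bar{\delta}_X=\bar{\rho}_X$ in particular gives $\bar{\delta}_X\geq\bar{\rho}_X$, Theorem \ref{GPanalyze}(1) with $Y=X$ and Proposition \ref{basicpro}(1) show that $X$ has property $(M)$. I then pass to any separable infinite-dimensional subspace $Z\subset X$; it still has property $(M)$ and contains no copy of $\ell_1$, both being hereditary, so the cited result of N.~J.~Kalton \cite{K1} supplies $1<p\leq\infty$ and a weakly null sequence $(z_n)\subset S_Z$ with $\lim_n\|z+\alpha z_n\|=\|(\|z\|,|\alpha|)\|_p$ for all $z\in Z$ and scalars $\alpha$.

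The key step is to identify $\bar{\delta}_X$ from this data. Regarding $(z_n)$ as a weakly null sequence in $X$ with $\lim_n\|z_n\|=1$, the ``in particular'' part of Remark \ref{easyinequality} together with $\bar{\delta}_X=\bar{\rho}_X$ forces $\lim_n\|x+z_n\|$ to exist and equal $\|x\|\bigl(1+\bar{\delta}_X(1/\|x\|)\bigr)$ for every nonzero $x$. Comparing with Kalton's value $\lim_n\|x+z_n\|=(\|x\|^{p}+1)^{1/p}$ for $x\in Z$, and letting $\|x\|$ run over all positive reals by scaling, I obtain $\bar{\delta}_X(t)=(1+t^{p})^{1/p}-1$ for all $t\geq 0$. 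In particular this determines $p$ intrinsically from $\bar{\delta}_X$, so it is independent of the choice of $Z$. Property $(m_p)$ for $X$ then follows for an arbitrary weakly null sequence $(x_n)$ by a subsequence argument: along any subsequence on which $\|x_n\|$ converges the same squeeze gives $\lim_n\|x+x_n\|=(\|x\|^{p}+(\lim_n\|x_n\|)^{p})^{1/p}$, and choosing the subsequences realizing $\limsup_n\|x_n\|$ and $\limsup_n\|x+x_n\|$ yields $\limsup_n\|x+x_n\|^{p}=\|x\|^{p}+\limsup_n\|x_n\|^{p}$.

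For the ``in particular'', a reflexive $X$ contains no copy of $\ell_1$, so the first part applies and gives $\bar{\delta}_X(t)=(1+t^{p})^{1/p}-1$ for some $1<p\leq\infty$. I would exclude $p=\infty$: otherwise $X$ has property $(m_\infty)$, and an infinite-dimensional space with property $(m_\infty)$ and no copy of $\ell_1$ has the LD2P, which cannot occur in a reflexive space because reflexivity forces the Radon-Nikod\'ym property and hence slices of arbitrarily small diameter. Thus $p<\infty$, whence $(1+t^{p})^{1/p}>t$ gives $\bar{\delta}_X(x,t)\geq\bar{\delta}_X(t)=(1+t^{p})^{1/p}-1>t-1$ for all $t\geq 1$ and $x\in S_X$, and Theorem \ref{GPanalyze}(2) delivers property $(O)$.

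The hard part will be the middle step, namely turning Kalton's statement---which only produces one good weakly null sequence inside a separable subspace---into an exact formula for the global modulus $\bar{\delta}_X$ of the whole space, and verifying that the exponent $p$ read off this way is canonical. Everything else, including the passage from one sequence to all weakly null sequences and the $p=\infty$ exclusion via the LD2P versus Radon-Nikod\'ym dichotomy, is then comparatively routine.
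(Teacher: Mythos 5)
Your argument is correct and follows essentially the same route as the paper: property $(M)$ via Theorem \ref{GPanalyze}(1), Kalton's Proposition 3.9 applied to a separable subspace, the squeeze from Remark \ref{easyinequality} to identify $\bar{\delta}_X(t)=\bar{\rho}_X(t)=\|(1,t)\|_p-1$, and a subsequence argument to upgrade the single good sequence to property $(m_p)$ for all weakly null sequences. The only (minor) divergence is in excluding $p=\infty$ in the reflexive case: the paper appeals to the proof of Kalton's proposition (no copy of $c_0$ forces $p<\infty$), whereas you go through the implication $(m_\infty)\Rightarrow$ LD2P and its incompatibility with the Radon--Nikod\'ym property --- both are valid.
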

\begin{proof}
Since every finite dimensional space has property $(m_p)$ for every $1\leq p \leq \infty$ and property $(O)$, it is enough to assume that $X$ is infinite dimensional.

 By (1) of theorem \ref{GPanalyze}, we see that $X$ has property $(M)$, and so does an infinite dimensional separable subspace $Y$ of $X$. Hence, there exists $1<p\leq \infty$  and a weakly null sequence $(y_n)\subset S_Y$ such that for every $y\in Y$ and every $\alpha\in\mathbb{K}$, $$\lim_{n}\|y+\alpha y_n\|=\|(\|y\|,|\alpha|)\|_p$$ by \cite[Proposition 3.9]{K1}. From remark \ref{easyinequality}, we see that for every $z\in S_Y$ and every $t\geq 0$,
$$1+\bar{\delta}_X(t)\leq \lim_n\| z + t y_n\|\leq 1+\bar{\rho}_X(t)$$
which gives  $\bar{\rho}_X(t)=\bar{\delta}_X(t)=\|(1,t)\|_p-1$. We see that $X$ has property $(m_p)$ as a  consequence of remark \ref{easyinequality}. Indeed, for a weakly null sequence $(x_n)$, take a subsequence $(z_n)$ of $(x_n)$ such that $\lim_n\|z_n\|=\limsup_n\|x_n\|$. From  remark \ref{easyinequality} we have that 
\begin{align*}
\lVert x\rVert+\lVert x\rVert\bar{\delta}_X\left(\frac{x}{\|x\|},\frac{\lim_n\lVert z_n\rVert}{\lVert x\rVert}\right)
&\leq \liminf_n\lVert x+z_n\rVert\\
&\leq \limsup_n\lVert x+z_n\rVert\\
&\leq \limsup_n\lVert x+x_n\rVert\\
&\leq \lVert x\rVert+\lVert x\rVert\bar{\rho}_X\left(\frac{x}{\|x\|},\frac{\limsup_n\lVert x_n\rVert}{\lVert x\rVert}\right)
\end{align*}
for arbitrary non-zero element $x\in X$.
Hence, we see that 
$$\lVert x\rVert+\lVert x\rVert\bar{\delta}_X\left(\frac{\limsup_n\lVert x_n\rVert}{\lVert x\rVert}\right)
\leq \limsup_n\lVert x+x_n\rVert
 \leq \lVert x\rVert+\lVert x\rVert\bar{\rho}_X\left(\frac{\limsup_n\lVert x_n\rVert}{\lVert x\rVert}\right).$$
and
$$\limsup_n\lVert x+x_n\rVert=\|(\|x\|,\limsup_n\|x_n\|)\|_p.$$

If $X$ is an infinite dimensional reflexive space, we see that $X$ does not contain a subspace isomorphic to $c_0$. Hence, according to the proof of \cite[Proposition 3.9]{K1}, we see that $X$ has $m_p$ for some $1<p<\infty$. It is clear that $X$ has property $(O)$.
\end{proof}

We also present a list of known Banach spaces having property $(O)$.
 
\begin{example} The following Banach space $X$ has property $(O)$.

\begin{enumerate}
\item A uniformly convex Banach space $X$ having property $(M)$
\item A uniformly convex Banach space $X$ having a property that every element $x\in X$ and weakly null sequence $(x_n)\subset X$ satisfy 
$$\lim_n\left(\lVert x+x_n\rVert-\lVert -x+x_n\rVert\right)=0.$$
\item A uniformly convex Banach space $X$ having a sequence $(K_n)\subset \mathcal{K}(X)$ such that $K_n$ and $K^*_n$ converge strongly to $id_X$ and $id_{X^*}$ respectively and that
$$\lim_n\lVert id-2K_n\rVert = 1.$$
\item A Banach space $X$ defined by 
$$X=\left[\bigoplus_{i=1}^\infty X_i\right]_p$$
for Banach spaces $X_i$ having property $(O)$ and $1<p<\infty$.
\end{enumerate}
\end{example}
\begin{proof}
(1) This is proved in \cite[Theorem 2.3]{Xu} for more general setting.

(2) This is proved in \cite[Proposition 1 and 2]{Sim}.

(3) In  \cite[Theorem 2.1]{JW}, it is proved that $X$ satisfies the condition in (2).

(4) This is proved in \cite[Proposition 3.11]{Ha}.
\end{proof}

As a consequence of proposition \ref{basicpro}, if $X$ is reflexive, $(X,Y)$ has property $(M)$ and one of $X$ and $Y$ has property $(O)$, then $(X,Y)$ has the WMP. Hence, it is worth to present pairs $(X,Y)$ have property $(M)$ like (1) of theorem  \ref{GPanalyze}. For example, (1) of theorem \ref{GPanalyze} shows that if $\bar{\delta}_X= \bar{\rho}_X$ then $X$ has property  $(M)$. Hence, we see that if $X$ and $Y$ satisfy $\bar{\delta}_X= \bar{\rho}_X$ and $\bar{\delta}_Y= \bar{\rho}_Y$, then $(X,Y)$ has property $(M)$ by (2) of proposition \ref{basicpro}. A pair $(X,c_0)$ also has property $(M)$ for any Banach space $X$. Indeed, for any $y\in c_0$ and $x\in X$ such that $\|y\|\leq \|x\|$, a contraction $T\in \mathcal{L}(X,c_0)$ and a weakly null sequence $(x_n)\subset X$, we see that $$\|y\|\leq  \limsup_n\|x+x_n\| \text{~and~} \limsup_n\|Tx_n\|\leq \limsup_n\|T(x+x_n)\| \leq \limsup_n\|x+x_n\|.$$
Hence, we have $$\limsup_n\|y+Tx_n\|=\left\|\left(\|y\|,\limsup_n\|Tx_n\|\right)\right\|_\infty \leq \limsup_n\|x+x_n\|.$$
We summarize these as follows.

\begin{cor}\label{thirdexample} For Banach spaces $X$ and $Y$,
\begin{enumerate}
\item if $\bar{\delta}_X\geq\bar{\rho}_Y$,  $X$ is reflexive and one of $X$ and $Y$ has property  $(O)$, then $(X,Y)$ has the WMP.

\item if $\bar{\delta}_X=\bar{\rho}_X$, $\bar{\delta}_Y=\bar{\rho}_Y$ and $X$ is reflexive, then $(X,Y)$ has the WMP.

\item if $X$ is reflexive and has property $(O)$, then $(X,c_0)$ has the WMP.
\end{enumerate}
\end{cor}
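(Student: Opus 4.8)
The plan is to reduce all three statements to Theorem \ref{MWMP}. In each part $X$ is reflexive, so it suffices to show that the relevant pair has property strict $(M)$; by Lemma \ref{sp} this amounts to verifying property $(M)$ and property $(O)$ separately for the pair, after which the WMP is immediate. Thus the entire proof is a matter of assembling the correct clauses of Proposition \ref{basicpro}, Theorem \ref{GPanalyze}, and Corollary \ref{asyp}.

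For part (1), I would first invoke Theorem \ref{GPanalyze}(1): the hypothesis $\bar{\delta}_X\geq\bar{\rho}_Y$ yields that $(X,Y)$ has property $(M)$. To obtain property $(O)$ for the pair I would split according to the hypothesis. If $X$ has property $(O)$, then Proposition \ref{basicpro}(3) transfers it directly to $(X,Y)$. If instead $Y$ has property $(O)$, then, combining this with the property $(M)$ of $(X,Y)$ just obtained, Proposition \ref{basicpro}(5) again gives property $(O)$ for the pair. In either case $(X,Y)$ has both $(M)$ and $(O)$, hence strict $(M)$ by Lemma \ref{sp}, and the WMP by Theorem \ref{MWMP}.

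For part (2), I would obtain property $(M)$ for $X$ and for $Y$ individually by applying Theorem \ref{GPanalyze}(1) to the pairs $(X,X)$ and $(Y,Y)$ — the inequalities $\bar{\delta}_X\geq\bar{\rho}_X$ and $\bar{\delta}_Y\geq\bar{\rho}_Y$ hold by the equality hypotheses — and reading off property $(M)$ for the single spaces via Proposition \ref{basicpro}(1). Then Proposition \ref{basicpro}(2) gives property $(M)$ for the pair $(X,Y)$. Property $(O)$ follows from Corollary \ref{asyp}, since a reflexive $X$ with $\bar{\delta}_X=\bar{\rho}_X$ has property $(O)$, which Proposition \ref{basicpro}(3) lifts to $(X,Y)$. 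As before, Lemma \ref{sp} and Theorem \ref{MWMP} conclude. For part (3), I would use the observation recorded just before the corollary that $(X,c_0)$ has property $(M)$ for every Banach space $X$; property $(O)$ for the pair comes from Proposition \ref{basicpro}(3) as $X$ has property $(O)$ by hypothesis, and the conclusion again follows from Lemma \ref{sp} and Theorem \ref{MWMP}.

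Since all the substantive work is already contained in the earlier results, there is no serious obstacle; the only point requiring genuine care is the case distinction in part (1). One must route the argument through the correct clause of Proposition \ref{basicpro} depending on whether $(O)$ is assumed on $X$ or on $Y$, and in the latter case the order of steps matters: property $(O)$ for the pair can be deduced only after property $(M)$ for the pair is in hand, and one should resist the temptation to use Proposition \ref{basicpro}(4), which would require property $(M)$ of $X$ alone — information the hypothesis $\bar{\delta}_X\geq\bar{\rho}_Y$ does not supply.
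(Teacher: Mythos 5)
Your proposal is correct and follows essentially the same route as the paper: the paper's justification is precisely the observation that property $(M)$ for the pair together with property $(O)$ for one of the spaces yields strict $(M)$ via Lemma \ref{sp} and Proposition \ref{basicpro}, and hence the WMP via Theorem \ref{MWMP}, with property $(M)$ supplied by Theorem \ref{GPanalyze}(1) in parts (1)--(2) and by the direct computation for $(X,c_0)$ in part (3), and property $(O)$ supplied by the hypothesis or by Corollary \ref{asyp}. Your remark about routing through clause (5) rather than clause (4) of Proposition \ref{basicpro} in the second case of part (1) is exactly the right precaution.
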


From the result of D. van Dulst \cite{Dul} which shows  that for every reflexive Banach space $X$ there is a renorming $\tilde{X}$ of $X$ which has property $(O)$, we have the following.

\begin{cor}
For every reflexive Banach space $X$, there is a renorming $\tilde{X}$ of $X$ such that  $(\tilde{X},c_0)$ has the WMP.
\end{cor}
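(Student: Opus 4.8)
The plan is to obtain the statement as an immediate combination of two ingredients already in hand: the renorming theorem of van Dulst \cite{Dul} and part (3) of corollary \ref{thirdexample}. First I would recall that, for a single Banach space, having property $(O)$ means exactly that $id_X$ has property $(O)$, as recorded in proposition \ref{basicpro}(1). Unwinding the definition of property $(O)$ for the contraction $T = id_X$, this says precisely that every non-zero $x \in X$ and every weakly null sequence $(x_n)$ satisfy $\limsup_n \|x_n\| < \limsup_n \|x + x_n\|$; that is, property $(O)$ at the level of a single space coincides with the classical Opial property. Consequently van Dulst's result, which for a reflexive $X$ produces an equivalent renorming $\tilde{X}$ enjoying the Opial property, delivers exactly a renorming $\tilde{X}$ with property $(O)$.

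The second step is to observe that passing to an equivalent norm cannot destroy reflexivity: $\tilde{X}$ is linearly isomorphic to $X$ through the identity map, and reflexivity is an isomorphic invariant, so $\tilde{X}$ is again reflexive. Having established both that $\tilde{X}$ is reflexive and that it has property $(O)$, I would then invoke corollary \ref{thirdexample}(3), which asserts that a reflexive Banach space with property $(O)$ makes its pair with $c_0$ satisfy the WMP. Applying this to $\tilde{X}$ yields that $(\tilde{X}, c_0)$ has the WMP, which is the desired conclusion.

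There is essentially no substantive obstacle: the corollary is a one-line consequence of the cited results. The only points demanding a moment of care are the terminological identification of property $(O)$ for a space with the Opial property, and---if one wants to be fully rigorous about the hypotheses of van Dulst's theorem---whether the renorming is guaranteed for all reflexive spaces or only separable ones. Since the form of van Dulst's theorem quoted in the preceding remark is stated for an arbitrary reflexive $X$, I would simply cite it in that form, so that no additional argument is required.
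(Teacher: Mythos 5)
Your proposal is correct and follows exactly the paper's own route: van Dulst's renorming provides property $(O)$ (which, for a single space, is by definition the Opial property), reflexivity survives the renorming, and corollary (3) of the preceding result then gives the WMP for the pair with $c_0$. Your parenthetical caution about whether van Dulst's theorem covers all reflexive spaces or only separable ones is well taken, since the paper itself quotes the result in the separable form earlier in the section and in the reflexive form here, but this does not affect the logic of the deduction.
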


\section{Property $(sM^*)$ and the weak* to weak* maximizing property}\label{section4}

In his paper \cite{K1}, N.J. Kalton introduced property $(M^*)$ to characterize a separable Banach space $X$ such that $\mathcal{K}(X)$ is an M-ideal in $\mathcal{L}(X)$. Later, property $(sM^*)$, the net-version of property $(M^*)$, is introduced, and a characterization of Banach space $X$ such that $\mathcal{K}(X)$ is an M-ideal in $\mathcal{L}(X)$ is obtained \cite{HWW}. In this section, we consider property strict $(sM^*)$ to study the  weak*-to-weak*MP  that is introduced by L. C. Garc\'ia‑Lirola and C. Petitjean (see \cite{GP}). As is the case with property $(M)$ in section 3, we give the definitions of properties $(sM^*)$, strict $(sM^*)$ and $(sO^*)$ for a contraction and a pair of Banach spaces.\\

\begin{definition}~

\begin{enumerate}
\item For Banach spaces $X$ and $Y$, we say that  a contraction $T\in \mathcal{L}(X,Y)$ has \textbf{property $\bf (sM^*)$} if elements $x^*\in X^*$ and $y^*\in Y^*$ satisfy $\lVert y^*\rVert \geq \lVert x^*\rVert$, then $\limsup_\alpha\lVert x^*+T^*y^*_\alpha\rVert \leq \limsup_\alpha\lVert y^*+y^*_\alpha\rVert$
for every bounded weakly* null net $(y^*_\alpha) \subset Y$. 

A pair $(X,Y)$ is said to have property $(sM^*)$ if every contraction $T\in\mathcal{L}(X,Y)$ has property $(sM^*)$, and $X$ is said to have property $(sM^*)$ if the identity operator $id_X$ has property $(sM^*)$.

\item For Banach spaces $X$ and $Y$, we say that  a contraction $T\in \mathcal{L}(X,Y)$ has \textbf{property strict $\bf (sM^*)$} if elements $x^*\in X^*$ and $y^*\in Y^*$ satisfy $\lVert y^*\rVert > \lVert x^*\rVert$, then
 $\limsup_\alpha\lVert x^*+T^*y^*_\alpha\rVert < \limsup_\alpha\lVert y^*+y^*_\alpha\rVert$
for every bounded weakly* null net $(y^*_\alpha) \subset Y$. 

A pair $(X,Y)$ is said to have property strict $(sM^*)$ if every contraction $T\in\mathcal{L}(X,Y)$ has property strict $(sM^*)$, and $X$ is said to have property strict $(sM^*)$ if the identity operator $id_X$ has property strict $(sM^*)$.

\item For Banach spaces $X$ and $Y$, we say that  a contraction $T\in \mathcal{L}(X,Y)$ has \textbf{property $\bf (sO^*)$} if an element $y^*\in Y^*$ is non-zero, then
$\limsup_\alpha\lVert T^*y^*_\alpha\rVert<\limsup_\alpha\lVert y^*+y^*_\alpha\rVert$
for every bounded weakly* null net $(y^*_\alpha) \subset Y^*$. 

A pair $(X,Y)$ is said to have property $(sO^*)$ if every contraction $T\in\mathcal{L}(X,Y)$ has property  $(sO^*)$, and $X$ is said to have property $(sO^*)$ if the identity operator $id_X$ has property  $(sO^*)$.\\
\end{enumerate}
\end{definition}

Note that \textbf{properties $\bf (M^*)$, strict $\bf (M^*)$} and $\bf (O^*)$, the sequence versions of properties $(sM^*)$, strict $(sM^*)$ and $(sO^*)$, can be defined analogously, and we will use them without their formal definition. From net versions of their proofs, we can deduce many observations in section \ref{section3}, and we list a few of them without their proofs as follows. Note that the sequence version of each observation also holds.

\begin{prop}\label{propnet}
For Banach spaces $X$ and $Y$,
\begin{enumerate}
\item a contraction $T\in\mathcal{L}(X,Y)$ has property strict $(sM^*)$ if and only if it has both properties $(sM^*)$ and $(sO^*)$.
\item $(X,X)$ has property $\mathcal{A}$  if and only if $X$ has property $\mathcal{A}$ where $\mathcal{A}$  is one of $(sO^*)$, $(sM^*)$ and strict $(sM^*)$.
\item if $X$ and $Y$ have property $(sM^*)$, then $(X,Y)$ has property $(sM^*)$.
\item if $Y$ has property $(sO^*)$, then $(X,Y)$ has property $(sO^*)$.
\item if $X$ has property $(sO^*)$ and $Y$ has property $(sM^*)$, then $(X,Y)$ has property $(sO^*)$.
\item if $X$ has property $(sO^*)$ and $(X,Y)$ has property $(sM^*)$, then $(X,Y)$ has property $(sO^*)$.
\end{enumerate}
\end{prop}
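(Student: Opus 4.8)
The plan is to observe that each of the six statements is the exact weak*-net transcription of a corresponding statement from Section \ref{section3}, once one passes from an operator $T\in\mathcal{L}(X,Y)$ to its adjoint $T^*\in\mathcal{L}(Y^*,X^*)$. Unravelling the definitions shows that a contraction $T$ has property $(sM^*)$ (resp.\ strict $(sM^*)$, resp.\ $(sO^*)$) precisely when its adjoint $T^*\colon Y^*\to X^*$ satisfies the defining inequality of property $(M)$ (resp.\ strict $(M)$, resp.\ $(O)$) from Section \ref{section3}, but with weakly null sequences replaced by bounded weakly* null nets and with the roles of domain and codomain played by $Y^*$ and $X^*$. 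Under this dictionary the hypothesis ``$X$ has the domain-side property'' becomes ``$Y$ has the corresponding starred property'' (since the domain of $T^*$ is $Y^*$), and ``$Y$ has the codomain-side property'' becomes ``$X$ has the corresponding starred property''; this is exactly the swap that turns \ref{basicpro}(3)--(5) into items (4)--(6) here, and \ref{sp} together with \ref{basicpro}(1)--(2) into items (1)--(3).

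Having fixed the dictionary, I would verify the two structural facts that make the Section \ref{section3} arguments transfer verbatim. First, $T^*$ is weak*-to-weak* continuous, being the adjoint of a bounded operator, so it carries a bounded weakly* null net $(y^*_\alpha)\subset Y^*$ to a bounded weakly* null net $(T^*y^*_\alpha)\subset X^*$; this is the precise analogue of the fact, used throughout Section \ref{section3}, that a bounded operator sends weakly null sequences to weakly null sequences. Second, $B_{X^*}$ and $B_{Y^*}$ are weak* compact, so every bounded net admits a weak*-convergent subnet and every bounded net of reals admits a convergent subnet; these let ``passing to a subnet'' play the role that ``passing to a subsequence'' played before. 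With these two observations in hand, the $\limsup$-manipulations in the proofs of \ref{sp} and \ref{basicpro} --- the convex-combination trick producing $(M)$ from strict $(M)$, the $\varepsilon$-scaling producing strict $(M)$ from $(M)$ and $(O)$, and the contractivity estimates $\|T^*y^*_\alpha\|\le\|y^*_\alpha\|$ used for the $(sO^*)$ items --- go through word for word with sequences replaced by nets.

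The only inputs that are not purely formal are the external lemmas invoked in Section \ref{section3}, namely the $(M)$-type results of N.~J.~Kalton \cite[Lemma 2.2]{K1} and \cite{KW} that underlie the sufficiency direction of \ref{basicpro}(1) and the statement \ref{basicpro}(2). For the present proposition I would appeal instead to their weak*-net counterparts for property $(sM^*)$, which are recorded in the monograph \cite{HWW} in exactly the form needed; substituting these for the primal citations completes items (1)--(3). I expect the main point requiring care to be precisely this bookkeeping of the duality: one must keep straight that the adjoint reverses the direction of the operator, so that every domain/codomain hypothesis and every norm inequality (for instance $\|y\|\le\|x\|$ becoming $\|y^*\|\ge\|x^*\|$) is flipped consistently, and that the subnet extractions used to realise the various $\limsup$'s are performed simultaneously so that all the relevant limits exist along a common subnet. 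None of this is deep, but it is where an error would most easily creep in, so I would write the dictionary out explicitly once and then invoke it for each of the six items rather than reprove them individually.
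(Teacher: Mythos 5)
Your proposal is correct and is essentially the proof the paper intends: the paper explicitly states these items ``without their proofs,'' remarking only that they follow from the net versions of the arguments for Lemma \ref{sp} and Proposition \ref{basicpro}, which is precisely the adjoint/weak*-net dictionary you set out (including the domain--codomain swap that turns \ref{basicpro}(3)--(5) into items (4)--(6), the weak*-to-weak* continuity of $T^*$, and the replacement of subsequence extractions by subnets justified by weak* compactness). Your appeal to the $(sM^*)$ material in \cite{HWW} in place of the primal citations to \cite{K1} and \cite{KW} matches how the paper itself handles the net setting elsewhere (cf.\ Lemma \ref{Meq}).
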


We stress the weak*-to-weak*MP version of theorem \ref{MWMP} with strict $(sM^*)$. The proof is just a slight modification of the one of theorem \ref{MWMP}, but we give it for the completeness.

\begin{theorem}\label{netw*MP}
For Banach spaces $X$ and $Y$, if $(X,Y)$ has property strict $(sM^*)$, then $(Y^*,X^*)$ has the weak*-to-weak*MP.
\end{theorem}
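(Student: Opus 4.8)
The plan is to transcribe the proof of Theorem \ref{MWMP} into the dual/weak* setting: replace the weak topology by the weak* topology, sequences by nets, and the pair $(X,Y)$ by $(Y^*,X^*)$, with $T$ replaced by its adjoint $T^*$. The passage to nets is forced here because $B_{Y^*}$ is only weak* compact, not weak* sequentially compact in general, so the net formulation of property strict $(sM^*)$ is exactly what is needed. I would argue by contradiction: fix $T\in\mathcal{L}(X,Y)$ and suppose its adjoint $T^*\colon Y^*\to X^*$ has a non-weakly* null maximizing sequence $(y^*_n)\subset B_{Y^*}$ yet fails to attain its norm. After normalizing so that $\|T\|=\|T^*\|=1$, the operator $T$ is a contraction, so property strict $(sM^*)$ of $(X,Y)$ is available for it.

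The heart of the argument is to produce a nonzero weak* limit point of the maximizing sequence and to combine it with the failure of norm attainment. First I would use the weak* compactness of $B_{Y^*}$ to pass to a subnet $(y^*_\alpha)$ of $(y^*_n)$ converging weak* to some $y^*\in B_{Y^*}$, where the fact that $(y^*_n)$ is not weakly* null forces $y^*\neq 0$. Then $w^*_\alpha=y^*_\alpha-y^*$ defines a bounded, weakly* null net. Since $T^*$ does not attain its norm and $\|T^*\|=1$, one checks that $\|T^*y^*\|<\|y^*\|$; otherwise $y^*/\|y^*\|$ would witness norm attainment. I would then apply property strict $(sM^*)$ to the contraction $T$ with the pair $x^*=T^*y^*$ and $y^*$ (which satisfy $\|y^*\|>\|x^*\|$) along the net $(w^*_\alpha)$, obtaining $\limsup_\alpha\|T^*y^*+T^*w^*_\alpha\|<\limsup_\alpha\|y^*+w^*_\alpha\|$. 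The left-hand side equals $\limsup_\alpha\|T^*y^*_\alpha\|=1$, since the subnet still maximizes, while the right-hand side equals $\limsup_\alpha\|y^*_\alpha\|\leq 1$, since $y^*_\alpha\in B_{Y^*}$. This yields $1<1$, the desired contradiction, so $T^*$ must attain its norm.

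The step I expect to demand the most care is the extraction of a \emph{nonzero} weak* cluster point, and this is exactly where the hypothesis that $(y^*_n)$ is not weakly* null is used. Concretely, non-weak*-nullity supplies some $y\in Y$ and $\varepsilon>0$ with $|y^*_{n_k}(y)|\geq\varepsilon$ along a subsequence, and any weak*-convergent subnet of that subsequence has a limit $y^*$ with $|y^*(y)|\geq\varepsilon$, whence $y^*\neq 0$. The remaining points — that a subnet of the convergent real sequence $(\|T^*y^*_n\|)$ still tends to $1$, and that $(w^*_\alpha)$ is bounded and weakly* null — are routine. I therefore do not anticipate any genuinely new difficulty beyond this cluster-point extraction, the rest being a faithful net-theoretic mirror of the proof of Theorem \ref{MWMP}.
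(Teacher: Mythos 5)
Your argument is correct and coincides with the paper's own proof: the authors likewise pass to a weak* convergent subnet with nonzero limit $y^*$, set $w^*_\alpha=y^*_\alpha-y^*$, note $\|T^*y^*\|<\|y^*\|$ from the failure of norm attainment, and apply property strict $(sM^*)$ to reach the contradiction $1<1$. Your extra justification of why the cluster point is nonzero is a detail the paper leaves implicit, but the route is the same.
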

\begin{proof}
For  an operator $T\in \mathcal{L}(X,Y)$, let $(y^*_n) \subset B_{Y^*}$ be a non-weakly* null maximizing sequence of $T^*$. Without loss of generality, we assume that $\|T\|=1$. Let $(y^*_\alpha)$ be a subnet of $(y^*_n)$ which converges weakly* to a non-zero element $y^*$ of $B_{Y^*}$ and define a weakly* null net $(w^*_\alpha)$  by $w^*_\alpha = y^*_\alpha -y^*$.

If $T^*$ does not attain its norm, then $\lVert T^*y^* \rVert < \lVert y^* \rVert$. Hence, we have
$$1=\limsup_{\alpha}{\lVert T^* y^*_\alpha \rVert} = \limsup_\alpha {\lVert T^*y^* + T^*w^*_\alpha\rVert}<\limsup_\alpha{\lVert y^* + w^*_\alpha \rVert }=\limsup_\alpha{\lVert y^*_\alpha \rVert} \leq 1$$
which leads to a contradiction.
\end{proof}

We do not know whether theorem \ref{netw*MP} holds if we replace property strict $(sM^*)$ by property strict $(M^*)$, the sequence version of property strict $(sM^*)$. The reason is that it cannot be said to exist a convergent subsequence of a given non-weakly* null maximizing sequence of a given adjoint operator. However, it is obvious that property strict $(M^*)$ for $(X,Y)$ implies the weak*-to-weak*MP in case that $B_{Y^*}$ is weak* sequentially compact.

\begin{theorem}\label{netw*MP111}
For Banach spaces $X$ and $Y$, if $(X,Y)$ has property strict $(M^*)$ and $B_{Y^*}$ is weak* sequentially compact, then $(Y^*,X^*)$ has the weak*-to-weak*MP.
\end{theorem}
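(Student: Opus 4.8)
The plan is to run the argument of theorem \ref{netw*MP} almost verbatim, the only change being that the subnet furnished there by Banach--Alaoglu is replaced by a genuine \emph{subsequence}, which is exactly what the extra hypothesis of weak* sequential compactness of $B_{Y^*}$ permits. So I would fix $T\in\mathcal{L}(X,Y)$ together with a non-weakly* null maximizing sequence $(y^*_n)\subset B_{Y^*}$ of $T^*$, and normalize so that $\|T\|=\|T^*\|=1$; then $\lim_n\|T^*y^*_n\|=1$, and since $T$ is a contraction it has property strict $(M^*)$ by hypothesis.

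The first real step is to produce a \emph{nonzero} weak* limit. Because $(y^*_n)$ is not weakly* null, there exist $y\in S_Y$, a number $\delta>0$, and an infinite set of indices along which $|y^*_n(y)|\geq\delta$; I would pass to that subsequence, and then invoke weak* sequential compactness of $B_{Y^*}$ to extract a further subsequence, still written $(y^*_n)$, converging weakly* to some $y^*\in B_{Y^*}$. Evaluating at $y$ gives $|y^*(y)|=\lim_n|y^*_n(y)|\geq\delta$, so $y^*\neq 0$. Passing to subsequences does not affect the maximizing property, so I still have $\lim_n\|T^*y^*_n\|=1$ and $\limsup_n\|y^*_n\|\leq 1$. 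Setting $w^*_n=y^*_n-y^*$ yields a bounded weakly* null sequence.

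For the contradiction I would suppose $T^*$ does not attain its norm. Then $\|T^*z^*\|<1$ for every $z^*\in S_{Y^*}$, and applying this to $y^*/\|y^*\|$ gives $\|T^*y^*\|<\|y^*\|$. Taking $x^*=T^*y^*\in X^*$, so that $\|y^*\|>\|x^*\|$, property strict $(M^*)$ of $T$ applied to $(w^*_n)$ yields
$$\limsup_n\|x^*+T^*w^*_n\|<\limsup_n\|y^*+w^*_n\|.$$
Since $x^*+T^*w^*_n=T^*(y^*+w^*_n)=T^*y^*_n$ and $y^*+w^*_n=y^*_n$, the left side equals $\lim_n\|T^*y^*_n\|=1$ and the right side equals $\limsup_n\|y^*_n\|\leq 1$, forcing $1<1$. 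This contradiction shows $T^*$ attains its norm, so $(Y^*,X^*)$ has the weak*-to-weak*MP.

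I do not expect a serious obstacle here: the entire substance is the same as in theorem \ref{netw*MP}, and the single new ingredient is the passage to a weak* convergent subsequence, precisely the point the remark preceding this theorem identifies as unavailable in general. The only piece of bookkeeping that deserves care is ensuring the extracted weak* limit is nonzero, which I handle by first thinning the sequence so that one fixed coordinate $y^*_n(y)$ stays bounded away from zero \emph{before} applying sequential compactness; everything else is a routine transcription of the net-version proof into sequences.
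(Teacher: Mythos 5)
Your proposal is correct and is exactly the argument the paper intends: the paper states this theorem without a separate proof, remarking only that it follows from the proof of theorem \ref{netw*MP} once weak* sequential compactness lets one replace the subnet by a weak* convergent subsequence, which is precisely what you do (including the necessary care in first thinning along a functional where $|y^*_n(y)|\geq\delta$ so that the extracted limit is nonzero).
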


As is the case with the WMP, moduli $\bar{\delta}^*_{X^*}$ and $\bar{\rho}^*_{X^*}$ are useful to study the weak*-to-weak*MP \cite[Theorem 3.1]{GP}. Since the analogue of remark \ref{easyinequality} is true for these moduli and a weakly* null net, we get the following by a slight modification of the proof of theorem \ref{GPanalyze}. We omit its proof.

\begin{theorem}\label{GPanalyzew*} For Banach spaces $X$ and $Y$,
\begin{enumerate}
\item if $(X,Y)$ satisfies $\bar{\delta}^*_{Y^*}\geq \bar{\rho}^*_{X^*}$ then it has property $(sM^*)$.
\item if $Y$ satisfies $\bar{\delta}^*_{Y^*}(y^*,t)>t-1$ for every $t\geq 1$ and $y^*\in S_{Y^*}$, then $Y^*$ has property $(sO^*)$.
\end{enumerate}
In particular, if $(X,Y)$ satisfies $\bar{\delta}^*_{Y^*}\geq \bar{\rho}^*_{X^*}$ and $\bar{\delta}^*_{Y^*}(y^*,t)>t-1$ for every $t\geq 1$ and $y^*\in S_{Y^*}$, then $(Y^*,X^*)$ has the weak*-to-weak*MP.
\end{theorem}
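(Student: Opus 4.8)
The plan is to transcribe the proof of Theorem~\ref{GPanalyze} into the dual, net setting, replacing weakly null sequences in $X$ by bounded weak* null nets in $Y^*$ and the moduli $\bar{\delta}_X,\bar{\rho}_Y$ by $\bar{\delta}^*_{Y^*},\bar{\rho}^*_{X^*}$. Two structural facts drive the transcription. First, since $T$ is a contraction, its adjoint $T^*\in\mathcal{L}(Y^*,X^*)$ is a weak* to weak* continuous contraction; hence for every bounded weak* null net $(y^*_\alpha)$ in $Y^*$ the net $(T^*y^*_\alpha)$ is a bounded weak* null net in $X^*$ with $\lVert T^*y^*_\alpha\rVert\le\lVert y^*_\alpha\rVert$, so the dual moduli may legitimately be applied on the $X^*$ side. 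Second, I will use the net analogue of Remark~\ref{easyinequality}: for a non-zero $z^*$ in a dual space $Z$ and a bounded weak* null net $(z^*_\alpha)$ for which $\lim_\alpha\lVert z^*_\alpha\rVert$ exists, one has $\lVert z^*\rVert+\lVert z^*\rVert\bar{\delta}^*_Z(\lim_\alpha\lVert z^*_\alpha\rVert/\lVert z^*\rVert)\le\liminf_\alpha\lVert z^*+z^*_\alpha\rVert$ and $\limsup_\alpha\lVert z^*+z^*_\alpha\rVert\le\lVert z^*\rVert+\lVert z^*\rVert\bar{\rho}^*_Z(\lim_\alpha\lVert z^*_\alpha\rVert/\lVert z^*\rVert)$, together with the corresponding two-argument forms.

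For part (1), fix a contraction $T$, elements $x^*\in X^*$ and $y^*\in Y^*$ with $\lVert y^*\rVert\ge\lVert x^*\rVert$, and a bounded weak* null net $(y^*_\alpha)$ in $Y^*$. The case $\lVert y^*\rVert=\lVert x^*\rVert=0$ is immediate from $\lVert T^*y^*_\alpha\rVert\le\lVert y^*_\alpha\rVert$. In the equal-norm case $\lVert x^*\rVert=\lVert y^*\rVert\ne 0$ I would pass to a subnet $(y^*_\beta)$ along which $\lVert x^*+T^*y^*_\beta\rVert\to\limsup_\alpha\lVert x^*+T^*y^*_\alpha\rVert$ and along which $\lVert T^*y^*_\beta\rVert$ and $\lVert y^*_\beta\rVert$ converge, and then chain
\begin{align*}
\limsup_\alpha\lVert x^*+T^*y^*_\alpha\rVert
&\le \lVert x^*\rVert+\lVert x^*\rVert\bar{\rho}^*_{X^*}\left(\frac{\lim_\beta\lVert T^*y^*_\beta\rVert}{\lVert x^*\rVert}\right)\\
&\le \lVert y^*\rVert+\lVert y^*\rVert\bar{\delta}^*_{Y^*}\left(\frac{\lim_\beta\lVert T^*y^*_\beta\rVert}{\lVert y^*\rVert}\right)\\
&\le \lVert y^*\rVert+\lVert y^*\rVert\bar{\delta}^*_{Y^*}\left(\frac{\lim_\beta\lVert y^*_\beta\rVert}{\lVert y^*\rVert}\right)\\
&\le \liminf_\beta\lVert y^*+y^*_\beta\rVert\le\limsup_\alpha\lVert y^*+y^*_\alpha\rVert,
\end{align*}
where the first step is the dual smoothness bound, the second uses $\lVert x^*\rVert=\lVert y^*\rVert$ together with the hypothesis $\bar{\rho}^*_{X^*}\le\bar{\delta}^*_{Y^*}$, the third uses $\lim_\beta\lVert T^*y^*_\beta\rVert\le\lim_\beta\lVert y^*_\beta\rVert$ and the monotonicity of $\bar{\delta}^*_{Y^*}$, and the last is the dual convexity bound. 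For $\lVert y^*\rVert>\lVert x^*\rVert$ I would reduce to the equal-norm case exactly as in Theorem~\ref{GPanalyze}: writing $x^*$ as a convex combination of $v^*$ and $-v^*$ with $\lVert v^*\rVert=\lVert y^*\rVert$, convexity of the norm gives $\lVert x^*+T^*y^*_\alpha\rVert\le\max\{\lVert v^*+T^*y^*_\alpha\rVert,\lVert -v^*+T^*y^*_\alpha\rVert\}$, and applying the equal-norm case to $\pm v^*$ finishes the claim.

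For part (2) I would run the net-dual transcription of Theorem~\ref{GPanalyze}(2). Since the modulus is non-negative, the hypothesis $\bar{\delta}^*_{Y^*}(y^*,t)>t-1$ for $t\ge 1$ in fact holds for all $t\ge 0$. Fixing a non-zero $y^*\in Y^*$ and a bounded weak* null net $(y^*_\alpha)$ in $Y^*$, pass to a subnet $(y^*_\beta)$ realizing $\limsup_\alpha\lVert y^*_\alpha\rVert$ as a limit, and estimate
\begin{align*}
\frac{\limsup_\alpha\lVert y^*_\alpha\rVert}{\lVert y^*\rVert}
&=\frac{\lim_\beta\lVert y^*_\beta\rVert}{\lVert y^*\rVert}
<\bar{\delta}^*_{Y^*}\left(\frac{y^*}{\lVert y^*\rVert},\frac{\lim_\beta\lVert y^*_\beta\rVert}{\lVert y^*\rVert}\right)+1\\
&\le\frac{1}{\lVert y^*\rVert}\liminf_\beta\lVert y^*+y^*_\beta\rVert\le\frac{1}{\lVert y^*\rVert}\limsup_\alpha\lVert y^*+y^*_\alpha\rVert,
\end{align*}
which is exactly the weak* Opial inequality defining property $(sO^*)$ for $Y$ (the reading consistent with Proposition~\ref{propnet}(4)). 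Finally, the \textbf{in particular} clause assembles the pieces: part (1) yields that $(X,Y)$ has property $(sM^*)$; part (2) together with Proposition~\ref{propnet}(4) yields that $(X,Y)$ has property $(sO^*)$; hence $(X,Y)$ has property strict $(sM^*)$ by Proposition~\ref{propnet}(1), and $(Y^*,X^*)$ has the weak*-to-weak*MP by Theorem~\ref{netw*MP}. I expect the only genuinely delicate point---everything else being a routine transcription---to be the net bookkeeping: because we work with nets rather than sequences, the repeated passage to a common subnet that simultaneously realizes a $\limsup$ on the left while forcing each relevant scalar net to converge must be justified via boundedness and the weak* compactness of $B_{Y^*}$, and one must keep that subnet common to every quantity so the displayed chains compose. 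The verification that $(T^*y^*_\alpha)$ is still a bounded weak* null net in $X^*$, which licenses the use of the dual moduli on the $X^*$ side, is the structural input that makes the whole transcription go through.
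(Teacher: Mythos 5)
Your proof is correct and is precisely the ``slight modification of the proof of Theorem~\ref{GPanalyze}'' that the paper invokes and omits: the same three-step chain through $\bar{\rho}^*_{X^*}$ and $\bar{\delta}^*_{Y^*}$ via the net analogue of Remark~\ref{easyinequality}, the same convex-combination reduction to the equal-norm case, and the same assembly of the ``in particular'' clause through Proposition~\ref{propnet} and Theorem~\ref{netw*MP}. Your parenthetical that ``$Y^*$ has property $(sO^*)$'' must be read as the weak* Opial inequality for bounded weak* null nets in $Y^*$ (the property of $id_Y$, as Proposition~\ref{propnet}(4) requires) is the right resolution of the statement's notational wrinkle.
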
 

For Banach spaces $X$ and $Y$, it is known that if $(Y^*,X^*)$ has the weak*-to-weak*MP, then $(X,Y)$ has the ACPP (see \cite[Remark 1.3]{GP}).

\begin{center}
\begin{tikzcd}
(Y^*,X^*)\text{~has the~}\text{weak}^*\text{-}\text{weak}^*\text{MP} \arrow[d, Rightarrow]    \\
                                       (X,Y)\text{~has the ACPP}\\
\mathcal{K}(X,Y)\text{~is an M-ideal} \arrow[u, Rightarrow] 
\end{tikzcd}
\end{center}

As is the case with the WMP, the ACPP and an M-ideal of compact operators, it is natural to look at the relationship between the weak*-to-weak*MP and the others. However, we only know a few. It is mentioned in \cite{GP} that if $X$ is non-reflexive and $Y$ is 1-dimensional, then the pair $(Y^*,X^*)$ has the weak*-to-weak*MP, but $(X,Y)$ does have WMP. On the other side, it is not known whether or not $(Y^*,X^*)$ has the weak*-to-weak*MP if $(X,Y)$ has the WMP. It is known that if $X$ is a separable reflexive Banach space having property $(m_p)$ for some $1<p<\infty$, then $X^*$ has property $(m_q)$ for the conjugate $q$ of $p$ (see \cite[Theorem 2.6]{KW}). Moreover, we see that $(X,X)$ has property strict $(M^*)$ from the sequence version of proposition \ref{propnet}. Hence, for the famous Enflo-Davie subspace $Z$ of $\ell_p$ for $1<p<\infty$ which does not have the CAP, $\mathcal{K}(Z,Z)$ is not an M-ideal in $\mathcal{L}(Z,Z)$ but $(Z^*,Z^*)$ has the  weak*-to-weak*MP by theorem \ref{netw*MP111}. We do not know the other implications, and we raise them as questions.\\

\begin{question}For Banach spaces $X$ and $Y$,
\begin{enumerate}
\item does $(Y^*,X^*)$ have the weak*-to-weak*MP if $(X,Y)$ has the WMP?
\item does $(Y^*,X^*)$ have the weak*-to-weak*MP if $(X,Y)$ has the ACPP?
\item does $(Y^*,X^*)$ have the weak*-to-weak*MP if $\mathcal{K}(X,Y)$ is an M-ideal?
\end{enumerate}
\end{question}

If the answer to question (2) is true, then the others are true either as we have seen in section \ref{section1}. We remark that question (1) is given in \cite[Question 5.12]{GP}, and we give a partial answer when $(Y^*,X^*)$ has property $(sO^*)$.

\begin{lemma}\label{dualWMPlem}
For Banach spaces $X$ and $Y$, assume that $(X,Y)$ has property $(sO^*)$. For a non-zero contraction $T\in\mathcal{L}(X,Y)$ and a maximizing sequence  $(y^*_n)\subset B_{Y^*}$ of $T^*$, if there is a weakly null sequence $(x_n)\subset B_X$ such that $\|T\|=\limsup_n|y_n^*Tx_n|$, then $(y^*_n)$ is weakly$^*$ null. In particular, whenever $Y$ is reflexive, the same happens if $(X,Y)$ has $(O^*)$.
\end{lemma}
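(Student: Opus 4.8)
The plan is to argue by contradiction through weak$^*$ cluster points, mimicking the proof of Theorem \ref{netw*MP} but feeding in the weakly null sequence $(x_n)$ to compensate for the fact that property $(sO^*)$ controls only the purely weakly$^*$ null part of the net. First I would normalize: since $T$ is a non-zero contraction, replacing $T$ by $T/\lVert T\rVert$ I may assume $\lVert T\rVert=1$; this keeps $T$ a contraction (so $(sO^*)$ applies to it), keeps $(y_n^*)$ maximizing for $T^*$, and keeps $\limsup_n\lvert y_n^*Tx_n\rvert=1$. I would also record that the maximizing hypothesis forces $\lVert y_n^*\rVert\to 1$, because $\lVert T^*y_n^*\rVert\le\lVert y_n^*\rVert\le 1$ while $\lVert T^*y_n^*\rVert\to\lVert T\rVert=1$.

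Next, to show $(y_n^*)$ is weakly$^*$ null I would use the weak$^*$ compactness of $B_{Y^*}$ and prove that its only weak$^*$ cluster point is $0$. So let $y^*$ be a weak$^*$ cluster point, pick a subnet $(y_\alpha^*)$ of $(y_n^*)$ with $y_\alpha^*\to y^*$ weakly$^*$, and take the corresponding subnet $(x_\alpha)$ of $(x_n)$, which is still weakly null. Put $w_\alpha^*=y_\alpha^*-y^*$, a bounded weakly$^*$ null net. The key computation is the splitting
$$y_\alpha^*(Tx_\alpha)=(T^*y^*)(x_\alpha)+(T^*w_\alpha^*)(x_\alpha),$$
in which the first summand tends to $0$ because $x_\alpha\to 0$ weakly and $T^*y^*\in X^*$ is fixed; hence $\limsup_\alpha\lvert y_\alpha^*(Tx_\alpha)\rvert\le\limsup_\alpha\lVert T^*w_\alpha^*\rVert$. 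If $y^*\neq 0$, property $(sO^*)$ applied to $T$, the non-zero $y^*$, and the net $(w_\alpha^*)$ gives
$$\limsup_\alpha\lVert T^*w_\alpha^*\rVert<\limsup_\alpha\lVert y^*+w_\alpha^*\rVert=\limsup_\alpha\lVert y_\alpha^*\rVert=1,$$
so that $\limsup_\alpha\lvert y_\alpha^*(Tx_\alpha)\rvert<1$. If the subnet is chosen so that $\lvert y_\alpha^*(Tx_\alpha)\rvert\to 1=\lVert T\rVert$, this is the desired contradiction and forces $y^*=0$.

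The main obstacle is precisely the coupling in that last sentence: an arbitrary subnet converging to a cluster point $y^*$ need not realize $\lvert y_\alpha^*(Tx_\alpha)\rvert\to 1$, since the $\limsup$ over the whole sequence may be carried by indices disjoint from those approaching $y^*$. I would dispose of this by first passing to a subsequence $(y_{n_k}^*)$ with $\lvert y_{n_k}^*(Tx_{n_k})\rvert\to 1$ (possible since the $\limsup$ equals $1$); then every subnet of $(y_{n_k}^*)$ automatically has its pairings tending to $1$, so the estimate above shows that every weak$^*$ cluster point of $(y_{n_k}^*)$ is $0$. The place where the weak nullity of $(x_n)$ does the real work is exactly here: a non-zero cluster point kills the ``diagonal'' term $(T^*y^*)(x_\alpha)$, so all the mass of $y_\alpha^*(Tx_\alpha)$ must sit in the term $(T^*w_\alpha^*)(x_\alpha)$ that $(sO^*)$ bounds strictly below $1$; this is what turns a large pairing into a contradiction and hence pins the cluster points at $0$.

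Finally, for the reflexive case I would replace nets by sequences throughout. When $Y$ is reflexive, $Y^*$ is reflexive, so $B_{Y^*}$ is weakly (equivalently weakly$^*$) sequentially compact by the Eberlein--\v{S}mulian theorem; thus every subsequence of $(y_n^*)$ has a weakly$^*$ convergent further subsequence, and the sequence version $(O^*)$ of property $(sO^*)$ suffices to run the identical estimate with $(x_n)$ and $(w_n^*)=(y_n^*-y^*)$ in place of the nets, yielding the same conclusion.
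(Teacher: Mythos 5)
Your argument is essentially the paper's: assume a weak$^*$ cluster point $y^*\neq 0$, write $y_\alpha^*(Tx_\alpha)=(T^*y^*)(x_\alpha)+\bigl(T^*(y_\alpha^*-y^*)\bigr)(x_\alpha)$, kill the first term using the weak nullity of $(x_\alpha)$, and bound the second strictly below $\lVert T\rVert$ by applying $(sO^*)$ to the bounded weak$^*$ null net $(y_\alpha^*-y^*)$. The paper applies $(sO^*)$ to $T/\lVert T\rVert$ inside the estimate instead of normalizing up front, and disposes of the reflexive case by the same Eberlein--\v{S}mulian remark; these are cosmetic differences.

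The one place you genuinely diverge is the coupling issue you flag, and your fix does not quite close it. Passing first to a subsequence $(y^*_{n_k})$ along which $|y^*_{n_k}(Tx_{n_k})|\to\lVert T\rVert$ shows that every weak$^*$ cluster point of \emph{that subsequence} is $0$, i.e.\ that $(y^*_{n_k})$ is weak$^*$ null; but the lemma asserts this for the whole sequence $(y^*_n)$, and nothing in your argument controls the cluster points carried by the discarded indices. In fact, under the literal $\limsup$ hypothesis the full conclusion is false: in $(\ell_2,\ell_2)$, which has $(sO^*)$, take $T=id_{\ell_2}$, $x_n=e_n$, and $y_n^*=e_n$ for $n$ even, $y_n^*=e_1$ for $n$ odd; then $(y_n^*)$ is maximizing, $\limsup_n|y_n^*(Tx_n)|=1=\lVert T\rVert$, yet $(y_n^*)$ is not weak$^*$ null. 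The paper's own proof silently assumes that the chosen subnet still realizes $\lVert T\rVert$, which is only guaranteed when $\lim_n|y_n^*(Tx_n)|=\lVert T\rVert$ --- and that is exactly what holds in the one place the lemma is used (theorem \ref{WMPWWMP}, where $\lVert T^*y_n^*\rVert-\tfrac1n\leq|y_n^*Tx_n|\leq\lVert T^*y_n^*\rVert$). So the hypothesis should be read as a limit rather than a limsup; with that reading every subnet realizes $\lVert T\rVert$, your estimate applies to an arbitrary weak$^*$ cluster point of the full sequence, and your proof is complete and coincides with the paper's.
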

\begin{proof}
Assume $(y^*_n)$ is not a weakly$^*$ null sequence. Since there exists a subnet $(y^*_{\alpha})$ of $(y^*_n)$ which converges weakly$^*$ to a non-zero element $y^*\in Y^*$, we have
\begin{align*}
\|T\|=&\limsup_{\alpha}\left<T^*y_{\alpha}^*,x_{\alpha}\right>\\
=&\limsup_{\alpha}\left<T^*y_{\alpha}^*-T^*y^*,x_{\alpha}\right>\\
\leq&\|T\|\limsup_{\alpha}\left\|\frac{T^*}{\|T\|}(y_{\alpha}^*-y^*)\right\|\limsup_{\alpha}\|x_{\alpha}\|\\
<&\|T\|\limsup_{\alpha}\|y^*+(y_{\alpha}^*-y^*)\|\leq \|T\|
\end{align*}
which leads to a contradiction.

If $Y$ is reflexive and $(X,Y)$ has $(O^*)$, then the same holds since we can take a subsequence of $(y^*_n)$ instead of a subnet.
\end{proof}

\begin{theorem}\label{WMPWWMP}
 For Banach spaces $X$ and $Y$, if a pair $(X,Y)$ has property $(sO^*)$ and  the WMP, then $(Y^*,X^*)$ has the weak*-to-weak*MP. In particular, whenever $Y$ is reflexive, the same happens if $(X,Y)$ has $(O^*)$.
\end{theorem}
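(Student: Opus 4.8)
The plan is to reduce the assertion about norm attainment of the adjoint $T^*$ to norm attainment of $T$ itself, and only then to invoke the WMP. Start with $T\in\mathcal{L}(X,Y)$ whose adjoint admits a non-weakly$^*$ null maximizing sequence $(y_n^*)\subset B_{Y^*}$; the goal is to show that $T^*$ attains its norm. If $T=0$ this is trivial, so I assume $T\neq 0$ and normalize $\|T\|=1$, making $T$ a non-zero contraction. The crucial preliminary observation is that the WMP of $(X,Y)$ forces $X$ to be reflexive; consequently $T$ attains its norm if and only if $T^*$ does, so it suffices to produce a \emph{non-weakly null} maximizing sequence of $T$ in $B_X$ and quote the WMP.

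Next I would manufacture a witness sequence on the primal side. Since $(y_n^*)$ is maximizing, $\lim_n\|T^*y_n^*\|=\|T^*\|=1$; writing $\|T^*y_n^*\|=\sup_{x\in B_X}|\langle y_n^*,Tx\rangle|$, I pick $x_n\in B_X$ with $|y_n^*(Tx_n)|>\|T^*y_n^*\|-\tfrac1n$. The routine estimate $|y_n^*(Tx_n)|\le\|Tx_n\|\le 1$ then yields $\lim_n|y_n^*(Tx_n)|=1$ and $\lim_n\|Tx_n\|=1$, so $(x_n)$ is a maximizing sequence of $T$ in $B_X$ satisfying $\limsup_n|y_n^*(Tx_n)|=\|T\|$, which is precisely the hypothesis appearing in Lemma \ref{dualWMPlem}.

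The decisive step is a dichotomy on $(x_n)$. If $(x_n)$ were weakly null, then Lemma \ref{dualWMPlem}, applicable because $(X,Y)$ has property $(sO^*)$, would force the maximizing sequence $(y_n^*)$ to be weakly$^*$ null, contradicting the standing hypothesis. Hence $(x_n)$ is non-weakly null, and being a maximizing sequence of $T$ it triggers the WMP: $T$ attains its norm, and by reflexivity of $X$ so does $T^*$. For the ``in particular'' clause I would run the identical argument but appeal to the reflexive version of Lemma \ref{dualWMPlem}, where property $(O^*)$ replaces $(sO^*)$; here the reflexivity of $Y$ makes $B_{Y^*}$ weakly$^*$ sequentially compact, so a non-weakly$^*$ null $(y_n^*)$ furnishes an honest weakly$^*$ convergent subsequence with non-zero limit rather than merely a subnet.

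I expect the only genuine subtlety to be conceptual rather than computational: one should resist attacking the weak$^*$ maximizing sequence $(y_n^*)$ directly, and instead transport the problem to the primal side through the witness sequence $(x_n)$, applying the \emph{contrapositive} of Lemma \ref{dualWMPlem}. Once this indirection is set up, the argument mirrors the proof of Theorem \ref{MWMP}, with the passage from $T$ to $T^*$ supplied entirely by the reflexivity of $X$ that the WMP guarantees.
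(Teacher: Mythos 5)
Your argument is correct and is essentially the paper's own proof: both construct the witness sequence $(x_n)\subset B_X$ with $|y_n^*(Tx_n)|>\|T^*y_n^*\|-\tfrac1n$ and then play the WMP against Lemma \ref{dualWMPlem}, the only difference being that you argue directly (assume $(y_n^*)$ non-weakly$^*$ null, rule out $(x_n)$ weakly null via the lemma, invoke the WMP) while the paper takes the contrapositive (assume $T^*$ does not attain its norm, rule out $(x_n)$ non-weakly null via the WMP, invoke the lemma). Your appeal to reflexivity of $X$ for the implication ``$T$ attains its norm $\Rightarrow$ $T^*$ attains its norm'' is harmless but unnecessary, since that direction holds for arbitrary $X$ by choosing a norming functional for $Tx$.
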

\begin{proof}
It is enough to prove that for an operator $T\in \mathcal{L}(X,Y)$ whose adjoint $T^*$ does not attains its norm, every maximizing sequence $(y_n^*)\subset B_{Y^*}$ of $T^*$ is weakly* null.

For a sequence $(x_n)\subset S_X$ such that $\|T^*y^*_n\|-\frac{1}{n}\leq |y^*_nTx_n|$, we see that $(x_n)$ is a maximizing sequence of $T$. Hence, it is weakly null. Otherwise, $T$ attains its norm since $(X,Y)$ has the WMP, and this implies $T^*$ attains its norm. Therefore, $(y^*_n)$ is weakly$^*$ null by lemma \ref{dualWMPlem}.
\end{proof}

By the sequence version of (4) in proposition \ref{propnet}, we see that $(X,\ell_p)$ has property $(O^*)$ for every $1<p<\infty$ and $X$. Hence, we have the following corollary.

\begin{cor}
For a Banach space $X$ and $1<p<\infty$, if $(X,\ell_p)$ has the WMP, then $(\ell_q,X^*)$ has the weak*-to-weak*MP where $q$ is the conjugate of $p$.
\end{cor}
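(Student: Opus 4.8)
The plan is to read this corollary off as a direct instance of Theorem \ref{WMPWWMP} in its reflexive form, so the only real work is to verify that the pair $(X,\ell_p)$ satisfies the hypotheses of that clause, namely the WMP together with property $(O^*)$. The WMP is precisely what is assumed, so I would concentrate on producing property $(O^*)$ for $(X,\ell_p)$ and then simply apply the theorem with $Y=\ell_p$.

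First I would record the standing facts about the target space. For $1<p<\infty$ the space $\ell_p$ is reflexive and its dual is $\ell_q$, where $q$ is the conjugate exponent, so that $1<q<\infty$ as well. Because $\ell_q$ is reflexive, bounded weak* convergence of sequences in $\ell_q=\ell_p^*$ coincides with weak convergence, and $\ell_q$ enjoys the classical Opial property. Unwinding the definition of property $(O^*)$ for the identity of $\ell_p$, whose adjoint is $id_{\ell_q}$, this Opial inequality is exactly the statement that $\ell_p$ has property $(O^*)$ in its sequence version.

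Next I would pass from the range space $\ell_p$ to the pair $(X,\ell_p)$. By the sequence version of part (4) of Proposition \ref{propnet}, the fact that $\ell_p$ has property $(O^*)$ forces $(X,\ell_p)$ to have property $(O^*)$ for an \emph{arbitrary} Banach space $X$. Thus $(X,\ell_p)$ carries both the WMP (by hypothesis) and property $(O^*)$, while $\ell_p$ is reflexive.

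Finally I would invoke the reflexive clause of Theorem \ref{WMPWWMP}: with $Y=\ell_p$ reflexive and $(X,\ell_p)$ possessing $(O^*)$ together with the WMP, the theorem yields that $(\ell_p^*,X^*)=(\ell_q,X^*)$ has the weak*-to-weak*MP, which is the assertion. I do not expect a genuine obstacle here, since the corollary is a packaged consequence of Theorem \ref{WMPWWMP} and Proposition \ref{propnet}; the single point that actually needs checking, that $\ell_p$ has property $(O^*)$, reduces to the standard Opial property of $\ell_q$ once reflexivity is used to identify weak* and weak sequential limits.
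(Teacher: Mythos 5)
Your proposal is correct and follows essentially the same route as the paper: the paper likewise derives property $(O^*)$ for $(X,\ell_p)$ from the sequence version of Proposition \ref{propnet}(4) and then invokes the reflexive clause of Theorem \ref{WMPWWMP}. The only difference is that you spell out why $\ell_p$ itself has property $(O^*)$ (via the Opial property of $\ell_q$ and reflexivity identifying weak* and weak sequential convergence), a step the paper leaves implicit.
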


From example \ref{thirdexample}, we have the following example.

\begin{example}
For $2\leq p<\infty$, if  a subspace $X$ of a quotient of $L_p[0,1]$ or $c_p$ ($2 \leq p < \infty$) having a $1$-unconditional Schauder decomposition or merely the unconditional MCAP, then $(\ell_q,X^*)$ has the weak*-to-weak*MP where $q$ is the conjugate of $p$.
\end{example}
We provide another example having the weak*-to-weak*MP with the direct sum of Banach spaces.
\begin{prop}\label{prop:stability}
For a Banach space $X$, a sequence of Banach spaces $(Y_i)$, and 
$$Y=\left[\bigoplus_{i=1}^\infty Y_i\right]_{c_0},$$
\begin{enumerate}
\item if $(X,Y_i)$ has the property $(sM^*)$ for each $i\in\mathbb{N}$, then $(X,Y)$ has the property $(sM^*)$.
\item if $(X,Y_i)$ has the property $(sO^*)$ for each $i\in\mathbb{N}$, then $(X,Y)$ has the property $(sO^*)$.
\end{enumerate}
\end{prop}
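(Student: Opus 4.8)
The plan is to reduce everything to coordinatewise estimates via the isometric identification $Y^{*}=\left[\bigoplus_{i=1}^{\infty}Y_{i}^{*}\right]_{\ell_{1}}$, under which each $y^{*}\in Y^{*}$ is a sequence $(y_{i}^{*})$ with $\|y^{*}\|=\sum_{i}\|y_{i}^{*}\|$. Writing $P_{i}\colon Y\to Y_{i}$ for the coordinate projection and $T_{i}=P_{i}T$, each $T_{i}$ is again a contraction and the adjoint decomposes as $T^{*}y^{*}=\sum_{i}T_{i}^{*}y_{i}^{*}$, a norm-convergent series in $X^{*}$ because $\sum_{i}\|T_{i}^{*}y_{i}^{*}\|\le\sum_{i}\|y_{i}^{*}\|$. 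I would also record that if $(y_{\alpha}^{*})$ is a bounded weak\* null net in $Y^{*}$, then each coordinate net $(y_{\alpha,i}^{*})$ is a bounded weak\* null net in $Y_{i}^{*}$, and more generally its restriction to any block of indices is weak\* null in the corresponding $c_{0}$-sum. These facts are what let me feed the hypotheses on the $(X,Y_{i})$ into the argument.

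For (1) I would fix a contraction $T$, elements $x^{*},y^{*}$ with $\|x^{*}\|\le\|y^{*}\|$ (assuming $\|y^{*}\|>0$, the case $y^{*}=0$ being trivial), and a bounded weak\* null net $(y_{\alpha}^{*})$. Given $\varepsilon>0$, I pick $N$ with $\sum_{i>N}\|y_{i}^{*}\|<\varepsilon$ and split $x^{*}=\sum_{i\le N}\xi_{i}^{*}+r^{*}$ with $\xi_{i}^{*}=\tfrac{\|y_{i}^{*}\|}{\|y^{*}\|}x^{*}$, so that $\|\xi_{i}^{*}\|\le\|y_{i}^{*}\|$ and $\|r^{*}\|<\varepsilon$. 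The triangle inequality together with $\|T_{i}\|\le1$ then gives
\[
\|x^{*}+T^{*}y_{\alpha}^{*}\|\le\sum_{i\le N}\|\xi_{i}^{*}+T_{i}^{*}y_{\alpha,i}^{*}\|+\varepsilon+\sum_{i>N}\|y_{\alpha,i}^{*}\|.
\]
I would pass to a subnet along which the left-hand $\limsup$ is attained and all the finitely many head quantities converge, apply property $(sM^{*})$ of each $(X,Y_{i})$ with base point $\xi_{i}^{*}$ (legitimate since $\|\xi_{i}^{*}\|\le\|y_{i}^{*}\|$) to the terms $i\le N$, and bound the tail by $\sum_{i>N}\|y_{\alpha,i}^{*}\|\le\sum_{i>N}\|y_{i}^{*}+y_{\alpha,i}^{*}\|+\varepsilon$. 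Since the relevant limits now exist, the finite sum of limits is the limit of the finite sum, and recombining yields $\limsup_{\alpha}\|x^{*}+T^{*}y_{\alpha}^{*}\|\le\limsup_{\alpha}\|y^{*}+y_{\alpha}^{*}\|+2\varepsilon$; letting $\varepsilon\to0$ proves $(sM^{*})$ for $(X,Y)$.

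For (2) I would first establish a non-strict companion: running the same truncation-and-subnet scheme but using, on each head coordinate, the estimate $\limsup_{\alpha}\|T_{i}^{*}y_{\alpha,i}^{*}\|\le\limsup_{\alpha}\|w_{i}^{*}+y_{\alpha,i}^{*}\|$ (which follows from $(sO^{*})$ when $w_{i}^{*}\neq0$ and from $\|T_{i}\|\le1$ when $w_{i}^{*}=0$), I obtain for every contraction $T$ and every $w^{*}\in Y^{*}$ the clean bound $\limsup_{\alpha}\|T^{*}y_{\alpha}^{*}\|\le\limsup_{\alpha}\|w^{*}+y_{\alpha}^{*}\|$. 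Now, given a non-zero $y^{*}$, I pick $i_{0}$ with $y_{i_{0}}^{*}\neq0$ and write $Y=Y_{i_{0}}\oplus_{\infty}Z$ with $Z=\left[\bigoplus_{i\neq i_{0}}Y_{i}\right]_{c_{0}}$, so that $Y^{*}=Y_{i_{0}}^{*}\oplus_{1}Z^{*}$ and $T^{*}y_{\alpha}^{*}=T_{i_{0}}^{*}y_{\alpha,i_{0}}^{*}+\widehat{T}^{*}\widehat{y}_{\alpha}^{*}$. Passing to a subnet attaining the left-hand $\limsup$ along which both pieces converge, property $(sO^{*})$ of $(X,Y_{i_{0}})$ gives the strict bound $\lim_{\beta}\|T_{i_{0}}^{*}y_{\beta,i_{0}}^{*}\|<\lim_{\beta}\|y_{i_{0}}^{*}+y_{\beta,i_{0}}^{*}\|$, while the companion applied to $(X,Z)$ gives $\lim_{\beta}\|\widehat{T}^{*}\widehat{y}_{\beta}^{*}\|\le\lim_{\beta}\|\widehat{y}^{*}+\widehat{y}_{\beta}^{*}\|$. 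Because the $\ell_{1}$-norm splits these two contributions exactly, adding them produces $\limsup_{\alpha}\|T^{*}y_{\alpha}^{*}\|<\lim_{\beta}\|y^{*}+y_{\beta}^{*}\|\le\limsup_{\alpha}\|y^{*}+y_{\alpha}^{*}\|$, which is $(sO^{*})$ for $(X,Y)$.

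The hard part will be the interchange of $\limsup_{\alpha}$ with the infinite sum over coordinates: a naive splitting only bounds by a sum of limsups, which over-counts. The device I rely on is to attain the limsup along a subnet and force the finitely many head quantities to converge, turning ``sum of limsups'' into ``limit of a finite sum,'' while the infinite tail is absorbed into an $\varepsilon$ using $\|T_{i}\|\le1$ and the smallness of $\sum_{i>N}\|y_{i}^{*}\|$. The subtler obstacle, specific to (2), is preserving strictness, since an $\varepsilon$-sized truncation error would swallow the strict gain; I would resolve this by confining the truncation error to the non-strict companion on $Z$ and extracting the strict gain from the single coordinate $i_{0}$, whose contribution the decomposition $Y^{*}=Y_{i_{0}}^{*}\oplus_{1}Z^{*}$ separates off exactly.
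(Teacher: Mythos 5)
Your proof is correct and follows essentially the same route as the paper's: both identify $Y^*$ isometrically with the $\ell_1$-sum of the $Y_i^*$, split the $X^*$-element proportionally over a finite head of coordinates, pass to a subnet along which the finitely many relevant limits exist, apply the coordinatewise hypothesis there, and absorb the tail into an $\varepsilon$ using $\|T_i\|\le 1$; and for (2) both first establish the non-strict bound for arbitrary base points and then recover strictness by isolating a block on which the base point is nonzero. The only cosmetic difference is that you carry out the head estimate in one stroke, whereas the paper first treats $Y_1\oplus_\infty Y_2$, inducts to finite $\ell_\infty$-sums, and then truncates with the canonical projections $P_n$.
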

\begin{proof}
To prove (1), we first demonstrate the case $Y=Y_1\oplus_\infty Y_2$. Take elements $x^*\in X^*$ and $(\xi^*,\eta^*)\in Y^*$ with $\lVert x^*\rVert\leq \lVert(\xi^*,\eta^*)\rVert$, a bounded weakly$^*$ null net $(y^*_{\alpha,1},y^*_{\alpha,2})\subset Y^*$, and contraction $T\in \mathcal{L}(X,Y)$. It is clear that, both $(y^*_{\alpha,1})$ and $(y^*_{\alpha,2})$ are weakly$^*$ null and that $T^*=S^*_1\oplus S^*_2$ for some contraction $S_i \in\mathcal{L}(X,Y_i)$ $(i=1,2)$. 

Since it is enough to assume that  $(\xi^*,\eta^*)$ is non-zero, define $\lambda=\frac{\lVert \xi^* \rVert}{\|(\xi^*,\eta^*)\|}$. Then, we have 
$$\lambda\lVert x^*\rVert \leq \lVert \xi^* \rVert, \text{~and~}(1-\lambda)\lVert x^* \rVert \leq \|\eta^*\|.$$

 Let us take a subnet $(y^*_{\beta,1},y^*_{\beta,2})$ of $(y^*_{\alpha,1},y^*_{\alpha,2})$ such that 
$$\limsup_\alpha\|x^*+S^*_1y^*_{\alpha,1}+S^*_2y^*_{\alpha,2}\|=\lim_\beta\|x^*+S^*_1y^*_{\beta,1}+S^*_2y^*_{\beta,2}\|,$$ and limits $\lim_\beta\|(\xi^*,\eta^*)+(y^*_{\beta,1},y^*_{\beta,2})\|$, $\lim_\beta\|\lambda x^*+S_1^*y^*_{\beta,1}\|$, $\lim_\beta\|(1-\lambda) x^*+S_2^*y^*_{\beta,2}\|$, and $\lim_\beta\|\xi^*+y^*_{\beta,1}\|$ exist. Note that $\lim_\beta\|\eta^*+y^*_{\beta,2}\|$ also exists. Then, we have
\begin{align*}
\limsup_\alpha\lVert x^* + T^*(y^*_{\alpha,1},y^*_{\alpha,2})\rVert =&\lim_\beta\|x^*+S^*_1y^*_{\beta,1}+S^*_2y^*_{\beta,2}\|\\
\leq&\lim_\beta\lVert \lambda x^* + S^*_1y^*_{\beta,1}\rVert+\lim_\beta\lVert (1-\lambda)x^*+S^*_2y^*_{\beta,2}\rVert\\
\leq&\lim_\beta\lVert \xi^*+y^*_{\beta,1}\rVert+ \lim_\beta\lVert \eta^*+y^*_{\beta,2}\rVert\\
=& \lim_\beta\lVert (\xi^*,\eta^*)+(y^*_{\beta,1},y^*_{\beta,2})\rVert\\
\leq&\limsup_\alpha\|(\xi^*,\eta^*)+(y^*_{\alpha,1},y^*_{\alpha,2})\|
\end{align*}
Hence, $(X,Y)$ has property $(sM^*)$ for  $Y=Y_1\oplus_\infty Y_2$, and it is also true for $Y=\left[\bigoplus_{i=1}^n Y_i\right]_{\infty}$ for any $n\in \mathbb{N}$ by the induction.

Let $Y=\left[\bigoplus_{i=1}^\infty Y_i\right]_{c_0}$. Fix elements $y^*\in Y^*$ and $x^*\in X^*$ with $\lVert x^*\rVert\leq\lVert y^*\rVert$, a bounded weakly$^*$ null net $(y^*_\alpha)\subset Y^*$ and a contraction $T\in \mathcal{L}(X,Y)$. Without loss of generality, we assume $y^*\neq0$.

For $\varepsilon>0$, choose $n\in\mathbb{N}$ such that $\|(id_{Y^*}-P_n)y^*\|<\min\{\varepsilon,\|y^*\|-(1-\varepsilon)\|x^*\|\}$ where $P_n$ is the canonical projection from $Y^*=\left[\bigoplus_{i=1}^\infty Y_i^*\right]_1$ to the first $n$ coordinates. Note that $P_ny^*_\alpha$ is weakly$^*$ null and 
$$\|P_ny^*\|=\|y^*\|-\|(id_{Y^*}-P_n)y^*\|>(1-\varepsilon)\|x^*\|.$$

Take a subnet $(y^*_\beta)$ of $(y^*_\alpha)$ such that 
$$\lim_\beta\|(1-\varepsilon)x^*+T^*y^*_\beta\|=\limsup_\alpha\|(1-\varepsilon)x^*+T^*y^*_\alpha\|$$ and limits $\lim_\beta\| x^*+T^*P_ny_\beta^*\|$, $\lim_\beta\|T^*(id_{Y^*}-P_n)y_\beta^*\|$, $\lim_\beta\|P_n(y^*+y^*_\beta)\|$, $\lim_\beta\|(id_{Y^*}-P_n)y_\beta^*\|$, $\lim_\beta\|(id_{Y^*}-P_n)(y^*+y_\beta^*)\|$ and $\lim_\beta\| y+y_\beta\|$ exist. Since $\left(X,\left[\bigoplus_{i=1}^n Y_i\right]_{\infty}\right)$ has property $(sM^*)$, we have that
\begin{align*}
\limsup_\alpha\|(1-\varepsilon)x^*+T^*y^*_\alpha\|=&\lim_\beta\|(1-\varepsilon)x^*+T^*y^*_\beta\|\\
\leq&\lim_\beta\|(1-\varepsilon)x^*+T^*P_ny^*_\beta\|+\lim_\beta\|T^*(id_{Y^*}-P_n)y_\beta^*\|\\
\leq&\lim_\beta\|P_ny^*+P_ny_\beta^*\|+\left|\lim_\beta\|(id_{Y^*}-P_n)y_\beta^*\|-\|(id_{Y^*}-P_n)y^*\|\right|+\varepsilon\\
\leq&\lim_\beta\|P_n(y^*+y_\beta^*)\|+\lim_\beta\|(id_{Y^*}-P_n)(y^*+y^*_\beta)\|+\varepsilon\\
=&\lim_\beta\|y^*+y_\beta^*\|+\varepsilon\\
\leq&\limsup_\alpha\|y^*+y^*_\alpha\|+\varepsilon.
\end{align*}
Since $\varepsilon$ is arbitrary, we have $\limsup_\alpha\|x^*+T^*y^*_\alpha\|\leq\limsup_\alpha\|y^*+y^*_\alpha\|$. Hence, the pair $(X,Y)$ has property $(sM)$.

To prove (2), as in the proof of (1), we demonstrate the case $Y=Y_1\oplus_\infty Y_2$. Take a non-zero element $(\xi^*,\eta^*)\in Y^*$, a bounded weakly$^*$ null net $(y_{\alpha,1}^*,y_{\alpha,2}^*)\subset Y^*$ and a contraction $T\in\mathcal{L}(X,Y)$, and let $T^*=S^*_1\oplus S^*_2$ for some contractions $S_i\in\mathcal{L}(X,Y_i)$ $(i=1,2)$. 

Let us take a subnet $(y_{\beta,1}^*,y_{\beta,2}^*)$ of $(y_{\alpha,1}^*,y_{\alpha,2}^*)$ such that 
$$\limsup_\alpha\|T^*(y_{\alpha,1}^*,y_{\alpha,2}^*)\|=\lim_\beta\|T^*(y_{\beta,1}^*,y_{\beta,2}^*)\|,$$ 
and both limits $\lim_\beta\|\xi^*+y^*_{\beta,1}\|$ and $\lim_\beta\|\eta^*+y^*_{\beta,2}\|$ exist. Then, we have
\begin{align*}
\limsup_\alpha\|T^*(y_{\alpha,1}^*,y_{\alpha,2}^*)\|=&\lim_\beta\|T^*(y_{\beta,1}^*,y_{\beta,2}^*)\|\\
\leq&\limsup_\beta\|S_1^*y^*_{\beta,1}\|+\limsup_\beta\|S_2^*y^*_{\beta,2}\|\\
<&\lim_\beta\|\xi^*+y^*_{\beta,1}\|+\lim_\beta\|\eta^*+y^*_{\beta,2}\|\\
=&\lim_\beta\|(\xi^*,\eta^*)+(y^*_{\beta,1},y^*_{\beta,2})\|\\
\leq&\limsup_\alpha\|(\xi^*,\eta^*)+(y^*_{\alpha,1},y^*_{\alpha,2})\|.
\end{align*}

Hence, $(X,Y)$ has property $(sO^*)$ for  $Y=Y_1\oplus_\infty Y_2$, and it is also true for $Y=\left[\bigoplus_{i=1}^n Y_i\right]_{\infty}$ for any $n\in \mathbb{N}$ by the induction.

In order to prove $(X,Y)$ has property $(sO^*)$ for $Y=\left[\bigoplus_{i=1}^\infty Y_i\right]_{c_0}$, we see that $\limsup_\alpha\|T^*y^*_\alpha\|\leq\limsup_\alpha\|y^*+y^*_\alpha\|$ for any non-zero element $y^*\in Y^*$, any bounded weakly$^*$ null net $(y_\alpha^*)\subset Y^*$ and any contraction $T\in \mathcal{L}(X,Y)$.

For given $\varepsilon>0$,  choose $n\in\mathbb{N}$ such that $\|(id_{Y^*}-P_n)y^*\|<\min\{\varepsilon,\|y^*\|\}$ where $P_n$ is the canonical projection in the proof of (1).

  Take a subnet $(y_\beta^*)$ of $(y_\alpha^*)$ such that $\limsup_\alpha\|T^*y^*_\alpha\|=\lim_\beta\|T^*y^*_\beta\|$ and limits $\lim_\beta\|P_n(y^*+y^*_\beta)\|$, $\lim_\beta\|(id_{Y^*}-P_n)y_\beta^*\|$ and $\lim_\beta\|(id_{Y^*}-P_n)(y^*+y_\beta^*)\|$ exist. Since $\left(X,\left[\bigoplus_{i=1}^n Y_i\right]_{\infty}\right)$ has property $(sO^*)$, we have that
\begin{align*}
 \limsup_\alpha\|T^*y^*_\alpha\|=&\lim_\beta\|T^*y^*_\beta\|\\
 \leq&\limsup_\beta\|T^*P_ny^*_\beta\|+\limsup_\beta\|T^*(id_{Y^*}-P_n)y_\beta^*\|\\
 <&\lim_\beta\|P_ny^*+P_ny^*_\beta\|+\lim_\beta\|(id_{Y^*}-P_n)y_\beta^*\|\\
 \leq &\lim_\beta\|P_n(y^*+y^*_\beta)\|+\lim_\beta\|(id_{Y^*}-P_n)(y^*+y_\beta^*)\|+\varepsilon\\
 =&\lim_\beta\|y^*+y^*_\beta\|+\varepsilon\\
 \leq&\limsup_\alpha\|y^*+y^*_\alpha\|+\varepsilon.
\end{align*}
Since $\varepsilon$ is arbitrary, we have $\limsup_\alpha\|T^*y^*_\alpha\|\leq\limsup_\alpha\|y^*+y^*_\alpha\|$. In fact, this holds for arbitrary $y^*$ clearly, and so we can refine the previous inequalities. Indeed, we have
$$\limsup_\beta\|T^*(id_{Y^*}-P_n)y_\beta^*\|\leq \limsup_\beta\|(id_{Y^*}-P_n)y^*+(id_{Y^*}-P_n)y_\beta^*\|$$
since $(id_{Y^*}-P_n)y_\beta^*$ is weakly* null, and this shows that for $y^*\neq 0$
\begin{align*}
\limsup_\alpha\|T^*y^*_\alpha\|<\limsup_\alpha\|y^*+y^*_\alpha\|.
\end{align*}
\end{proof}

\begin{example}For arbitrary $p,q\in (1,\infty)$, a pair $(\ell_1(\ell_p),\ell_q)$ has the weak*-to-weak*MP.
\end{example}

\begin{remark} As we use nets to define $(sM^*)$, we can consider the net-versions of the WMP and the weak*-to-weak*MP analogously. Moreover, with the same proofs of theorem \ref{netw*MP}, we see that for Banach spaces $X$ and $Y$ if $(X,Y)$ has property strict $(sM^*)$, then a pair $(Y^*,X^*)$ has the net version of the weak*-to-weak*MP that means an adjoint $T^*$ of $T\in \mathcal{L}(X,Y)$ attains its norm whenever there is a non-weakly* null net $(y^*_\alpha)\subset B_{Y^*}$ such that $$\lim_\alpha\lVert T^*y^*_\alpha\rVert=\lVert T^*\rVert.$$
 However, there is no need to consider the net version of the weak*-to-weak*MP since the existence of a non-weakly* null maximizing net of $T^*$ is equivalent to the that of a non-weakly* null maximizing sequence of $T^*$ (see \cite[Remark 1.2]{GP}). Similarly, we see that if a pair $(X,Y)$ has property strict $(sM)$, the net version of property $(M)$, then it has the net version of the WMP. However, it is not needed to consider this property by the same reason.
\end{remark}

\section{Properties $(sM)$ and $(sO)$ and the moduli of
asymptotic uniform convexity and smoothness}\label{section5}

The use of the moduli of asymptotic uniform convexity and smoothness has the advantage of helping us investigate properties $(sM)$ and $(sO)$ for Banach spaces, the net versions of properties $(M)$ and $(O)$. As a result, we characterize a Banach space with property $(sO)$. We also study a Banach space $X$ such that $\bar{\delta}_X=\bar{\rho}_X$, and this allows us to revisit the characterization of a Banach space with property $(M_p)$ given in \cite{K1} and \cite{Oja1}. To this end, we provide definitions of properties $(sM)$ and $(sO)$ and some useful lemmas. We refer the reader to \cite[Ch.VI.4]{HWW} for more details on property $(sM)$.\\

\begin{definition}~
\begin{enumerate}
\item We say that a Banach space $X$ has property $(sM)$ if elements $x,y\in X$ satisfy $\lVert y\rVert \leq \lVert x\rVert$, then $\limsup_\alpha\lVert y+x_\alpha\rVert \leq \limsup_\alpha\lVert x+x_\alpha\rVert$ for every bounded weakly null net $(x_\alpha) \subset X$.

\item  We say that a Banach space $X$ has property $(sO)$ if an element $x\in X$ is non-zero, then $\limsup_\alpha\lVert x_\alpha\rVert < \limsup_\alpha\lVert x+x_\alpha\rVert$ for every bounded weakly null net $(x_\alpha) \subset X$.
\end{enumerate}
\end{definition}

\begin{lemma}\cite[Lemma 4.13, Ch.VI.4]{HWW}\label{Meq} For a Banach space $X$, the following are equivalent.
\begin{enumerate}[(i)]
\item $X$ has property $(sM)$ 
\item Whenever elements $x,y\in X$ satisfy $\|y\|=\| x\|$ and $(x_\alpha)\subset X$ is a bounded weakly null net, if $\lim_\alpha \lVert x+x_\alpha\rVert$ exists, then $\lim_\alpha \| y+x_\alpha\|$ exists and $\lim_\alpha \| y+x_\alpha\|=\lim_\alpha \| x+x_\alpha\|$.
\end{enumerate}
\end{lemma}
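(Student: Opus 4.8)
The plan is to prove the two implications separately, in each case reducing everything to the defining inequality of property $(sM)$ by passing to suitable subnets of the given net. Throughout I will use the two standard facts that any subnet of a bounded weakly null net is again a bounded weakly null net, and that a bounded net of real numbers always admits a subnet converging to any prescribed cluster value, in particular to its $\limsup$ or to its $\liminf$.

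For $(i)\Rightarrow(ii)$, suppose $\|y\|=\|x\|$ and $(x_\alpha)$ is a bounded weakly null net for which $\lim_\alpha\|x+x_\alpha\|=:L$ exists. Since $\|y\|\le\|x\|$ and $\|x\|\le\|y\|$, property $(sM)$ applies with the roles of $x$ and $y$ interchanged, giving $\limsup_\alpha\|y+x_\alpha\|\le L$ and $L=\limsup_\alpha\|x+x_\alpha\|\le\limsup_\alpha\|y+x_\alpha\|$; hence $\limsup_\alpha\|y+x_\alpha\|=L$. It remains to upgrade this to $\liminf_\alpha\|y+x_\alpha\|=L$. I would choose a subnet $(x_\beta)$ with $\lim_\beta\|y+x_\beta\|=\liminf_\alpha\|y+x_\alpha\|$; along it $\lim_\beta\|x+x_\beta\|=L$ automatically, as a subnet of a convergent net. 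Applying $(sM)$ to this subnet, again with $x$ and $y$ interchanged, yields $L=\limsup_\beta\|x+x_\beta\|\le\limsup_\beta\|y+x_\beta\|=\liminf_\alpha\|y+x_\alpha\|$, and since the $\liminf$ never exceeds the $\limsup$, the limit $\lim_\alpha\|y+x_\alpha\|$ exists and equals $L$.

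For $(ii)\Rightarrow(i)$, I would first treat the equal-norm case $\|y\|=\|x\|$: choose a subnet $(x_\beta)$ realizing $\limsup_\alpha\|y+x_\alpha\|$ as a genuine limit, then refine it once more so that $\lim_\beta\|x+x_\beta\|$ also exists; now $(ii)$ forces $\limsup_\alpha\|y+x_\alpha\|=\lim_\beta\|y+x_\beta\|=\lim_\beta\|x+x_\beta\|\le\limsup_\alpha\|x+x_\alpha\|$. To pass from here to the general case $\|y\|\le\|x\|$, with $x\neq 0$ (the case $x=0$ being trivial), I would set $v=\tfrac{\|x\|}{\|y\|}\,y$ when $y\neq 0$ and let $v$ be any vector of norm $\|x\|$ when $y=0$, so that $\|v\|=\|x\|$ and $y=\lambda v+(1-\lambda)(-v)$ with $\lambda=\tfrac12\bigl(1+\|y\|/\|x\|\bigr)\in[0,1]$. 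Convexity of the norm then gives $\|y+x_\alpha\|\le\max\{\|v+x_\alpha\|,\|-v+x_\alpha\|\}$, and after extracting a subnet along which $\lim_\beta\|v+x_\beta\|$, $\lim_\beta\|-v+x_\beta\|$ and $\lim_\beta\|x+x_\beta\|$ all exist, the equal-norm case applied to both $v$ and $-v$, each of norm $\|x\|$, identifies the first two limits with $\lim_\beta\|x+x_\beta\|$; hence $\limsup_\alpha\|y+x_\alpha\|\le\limsup_\alpha\|x+x_\alpha\|$.

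The triangle-inequality estimate and the norm bookkeeping are routine. The one point that needs genuine care, and which I expect to be the main obstacle, is the handling of subnets in the net setting: unlike sequences, one must repeatedly invoke that a subnet of a bounded weakly null net remains bounded and weakly null, and that passing to a further subnet preserves any limit already secured, so that the defining inequality of $(sM)$ may legitimately be applied to each extracted subnet and that the cluster values actually computed are the intended $\limsup$ or $\liminf$ of the original net. Once this subnet bookkeeping is set up cleanly, both implications follow directly.
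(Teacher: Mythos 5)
The paper offers no proof of this lemma to compare against: it is quoted verbatim from \cite[Lemma VI.4.13]{HWW} and used as a black box, so your argument has to stand on its own. It does. Both implications are correct: the $(i)\Rightarrow(ii)$ direction correctly exploits the symmetry $\|y\|\le\|x\|$ and $\|x\|\le\|y\|$ to pin down the $\limsup$, and then passes to a subnet realizing the $\liminf$ of $\|y+x_\alpha\|$ (along which $\|x+x_\alpha\|$ still converges to $L$) to squeeze the $\liminf$ up to $L$; the $(ii)\Rightarrow(i)$ direction correctly handles the equal-norm case by subnet extraction and then reduces $\|y\|\le\|x\|$ to it via the convex decomposition $y=\lambda v+(1-\lambda)(-v)$ with $\|v\|=\|\!-\!v\|=\|x\|$, which is exactly the same trick the paper itself uses in the proof of Theorem \ref{GPanalyze}(1). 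The standard facts you invoke (the $\liminf$ and $\limsup$ of a bounded real net are cluster values, hence attained along subnets, and subnets of bounded weakly null nets are bounded weakly null) are all legitimate. One small bookkeeping slip in the final step: in the general case of $(ii)\Rightarrow(i)$ you should first pass to a subnet along which $\|y+x_\alpha\|$ converges to $\limsup_\alpha\|y+x_\alpha\|$ and only then refine so that the three limits $\lim_\beta\|v+x_\beta\|$, $\lim_\beta\|-v+x_\beta\|$, $\lim_\beta\|x+x_\beta\|$ exist; as written, the inequality $\limsup_\beta\|y+x_\beta\|\le\lim_\beta\|x+x_\beta\|$ along an arbitrary such subnet only bounds a cluster value of $\|y+x_\alpha\|$, not its $\limsup$ over the original net. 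Since you performed exactly this ordering in the equal-norm case, this is a presentational omission rather than a gap.
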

\begin{lemma}\label{AUM}
For an infinite dimensional Banach space $X$, an element $x\in S_X$ and a real number $t>0$, there exist weakly null nets $(x_\alpha) \subset S_X$ and $(y_\beta)\subset S_X$ such that 
$$\bar{\delta}_X(x,t)=\lim_\alpha\lVert x+tx_\alpha\rVert-1 \text{~and~} \bar{\rho}_X(x,t)=\lim_\beta\lVert x+ty_\beta\rVert-1.$$
\end{lemma}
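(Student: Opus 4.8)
The plan is to realize both moduli as limits over the directed set of finite-codimensional subspaces of $X$, paired with a tolerance parameter, after first reducing the one-sided size constraints $\|y\|\le t$ (in $\bar{\rho}_X$) and $\|y\|\ge t$ (in $\bar{\delta}_X$) to the single constraint $\|y\|=t$, so that the perturbing vectors may be written as $tz$ with $z\in S_X$. Concretely, I would set $\mathcal{D}=\{(Y,\eps): Y\subseteq X \text{ finite-codimensional},\ \eps>0\}$, directed by $(Y_1,\eps_1)\le(Y_2,\eps_2)$ iff $Y_1\supseteq Y_2$ and $\eps_1\ge\eps_2$; this is directed since the intersection of two finite-codimensional subspaces is finite-codimensional. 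Writing
$$R(Y)=\sup_{y\in Y,\,\|y\|\le t}\lVert x+y\rVert-1,\qquad D(Y)=\inf_{y\in Y,\,\|y\|\ge t}\lVert x+y\rVert-1,$$
one checks that $R$ is non-increasing and $D$ is non-decreasing along $\mathcal{D}$, so as monotone nets they satisfy $R(Y)\to\inf_Y R(Y)=\bar{\rho}_X(x,t)$ and $D(Y)\to\sup_Y D(Y)=\bar{\delta}_X(x,t)$.

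The key reduction is to norm-$t$ perturbations. For $\bar{\rho}_X$, I would show $\sup_{\|y\|\le t}\lVert x+y\rVert=\sup_{\|y\|=t}\lVert x+y\rVert$ for each $Y$: the right-hand side is at least $1$, because for any $z\in Y\cap S_X$ (such $z$ exists since $Y$ is infinite-dimensional) one has $\max\{\lVert x+tz\rVert,\lVert x-tz\rVert\}\ge\lVert x\rVert=1$; moreover any $y$ with $\|y\|\le t$ and $\lVert x+y\rVert\ge1$ is dominated by its rescaling $\tfrac{t}{\|y\|}y$ to the sphere, via $\lVert x+\tfrac{t}{\|y\|}y\rVert\le(1-s)+s\lVert x+y\rVert\le\lVert x+y\rVert$ with $s=\|y\|/t$, while any $y$ with $\lVert x+y\rVert<1$ is already below the sphere-supremum. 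For $\bar{\delta}_X$ this rescaling argument \emph{fails} in general, and this is the main obstacle: pushing a vector out to norm $t$ can strictly increase $\lVert x+y\rVert$ exactly when $\lVert x+y\rVert<1$. I would circumvent it by fixing a norming functional $x^*\in S_{X^*}$ with $\re x^*(x)=1$ and restricting attention to subspaces $Y\subseteq\ker x^*$, which are cofinal in $\mathcal{D}$ (as $Y_0\cap\ker x^*\subseteq Y_0$). For such $Y$ every $y\in Y$ satisfies $\lVert x+y\rVert\ge\re x^*(x+y)=1$, so the rescaling argument applies throughout and gives $\inf_{\|y\|\ge t}\lVert x+y\rVert=\inf_{\|y\|=t}\lVert x+y\rVert$. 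Hence on this cofinal set $D(Y)=\inf_{z\in Y\cap S_X}\lVert x+tz\rVert-1$, while $R(Y)=\sup_{z\in Y\cap S_X}\lVert x+tz\rVert-1$ for every $Y$.

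With these reductions I would, for each index $(Y,\eps)$, choose unit vectors $y_{(Y,\eps)},x_{(Y,\eps)}\in Y\cap S_X$ realizing the supremum and infimum to within $\eps$, so that
$$\lVert x+ty_{(Y,\eps)}\rVert-1>R(Y)-\eps,\qquad \lVert x+tx_{(Y,\eps)}\rVert-1<D(Y)+\eps,$$
the second choice ranging only over $Y\subseteq\ker x^*$. Both resulting nets are weakly null: for any $w^*\in X^*$, the indices past $(\ker w^*,1)$ (respectively past $(\ker x^*\cap\ker w^*,1)$) have their chosen vector inside $\ker w^*$, so $w^*$ eventually annihilates it. Finally, squeezing $\lVert x+ty_\beta\rVert-1$ between $R(Y)-\eps$ and $R(Y)$, and $\lVert x+tx_\alpha\rVert-1$ between $D(Y)$ and $D(Y)+\eps$, and letting $R(Y)\to\bar{\rho}_X(x,t)$, $D(Y)\to\bar{\delta}_X(x,t)$ and $\eps\to0$, yields the two asserted limits. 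I expect the only genuinely delicate point to be the norm-$t$ reduction for $\bar{\delta}_X$, which the norming-functional restriction resolves.
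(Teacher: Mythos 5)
Your argument follows essentially the same route as the paper's: index by pairs (finite\mbox{-}codimensional subspace, tolerance), pick near-optimizers in the unit sphere of each subspace, get weak nullity because the net eventually lies in any prescribed kernel, and squeeze. The paper dispatches the reduction from $\lVert y\rVert\ge t$ to $\lVert y\rVert=t$ as ``obvious by the convexity of the norm'' and obtains the matching lower bound from Remark~\ref{easyinequality}; your restriction to the cofinal family $Y\subseteq\ker x^*$ is a clean, fully correct way to justify that reduction, and your monotone-net squeeze replaces the appeal to the remark. One display is garbled, however: in the $\bar{\rho}_X$ reduction you assert $\lVert x+\tfrac{t}{\lVert y\rVert}y\rVert\le(1-s)+s\lVert x+y\rVert$ with $s=\lVert y\rVert/t$, which is false in general (take $X=\ell_1$, $x=e_1$, $y=\tfrac{t}{2}e_2$: the left side is $1+t$ while the right side is $1+\tfrac{t}{4}$); you have swapped the roles of $y$ and its rescaling, and the hypothesis that matters is on $\lVert x+\tfrac{t}{\lVert y\rVert}y\rVert$, not on $\lVert x+y\rVert$. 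The correct convexity inequality runs the other way: writing $y'=\tfrac{t}{\lVert y\rVert}y$ one has $\lVert x+y\rVert=\lVert(1-s)x+s(x+y')\rVert\le(1-s)+s\lVert x+y'\rVert\le\max\bigl\{1,\lVert x+y'\rVert\bigr\}$, which together with your (correct) observation that $\sup_{z\in Y,\,\lVert z\rVert=t}\lVert x+z\rVert\ge1$ yields exactly the domination you state in words. With that display repaired the proof is complete and matches the paper's in all essentials.
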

\begin{proof}
We only prove that there exists a weakly null net $(x_\alpha) \subset S_X$ such that $\bar{\delta}_X(x,t)=\lim_\alpha\lVert x+tx_\alpha\rVert-1$ since the rest can be proved by the same way.

We first note that $\bar{\delta}_X(x,t)=\sup_{\dim(X/Y)<\infty}\inf_{y\in Y,y\in S_Y}\lVert x+ty\rVert-1$ which is obvious by the convexity of the norm.

 Define a set $\mathcal{F}=\{(F,\varepsilon) : F\text{~is a finite subset of~}X^*, \varepsilon>0\}$ and a partial order $\leq$ by $\alpha\leq \beta$ for $\alpha = (F_\alpha, \varepsilon_\alpha)\in \mathcal{F}$ and $\beta = (F_\beta, \varepsilon_\beta)\in \mathcal{F}$ if $F_\alpha\subset F_\beta$ and $\varepsilon_\alpha > \varepsilon_\beta$.  

For each $\alpha= (F_\alpha, \varepsilon_\alpha)\in \mathcal{F}$ and $Y_\alpha=\bigcap_{x^*\in F_\alpha} \ker(x^*)$, take an element $x_\alpha \in S_{Y_\alpha}$ such that $\inf_{y\in S_{Y_\alpha}}\lVert x+ty\rVert +\varepsilon_\alpha >\lVert x+tx_\alpha\rVert$. It is clear that $(x_\alpha)$ is weakly null and that $\liminf_\alpha\lVert x+tx_\alpha\rVert-\|x\| \leq \bar{\delta}_X(x,t)$. From remark \ref{easyinequality}, we finish the proof.
\end{proof}

We characterize an infinite dimensional Banach space $X$ satisfying the condition in theorem \ref{GPanalyze} that $\bar{\delta}_X(x,t)>t-1$ for every $t\geq 1$ and $x\in S_X$ with property $(sO)$.

\begin{prop}\label{aum1}
For a Banach space $X$, $X$ has property $(sO)$ if and only if it is finite dimensional or satisfies $\bar{\delta}_X(x,t)>t-1$ for every $t\geq 1$ and $x\in S_X$.
\end{prop}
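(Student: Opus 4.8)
The plan is to treat the two alternatives on the right-hand side separately and to prove both implications. The finite-dimensional case of the ``if'' direction is immediate: in a finite-dimensional space the weak and norm topologies coincide, so every bounded weakly null net $(x_\alpha)$ is norm null, whence $\limsup_\alpha\|x_\alpha\|=0<\|x\|=\limsup_\alpha\|x+x_\alpha\|$ for any non-zero $x$, giving property $(sO)$. Thus the real content is the equivalence, for an arbitrary $X$, between property $(sO)$ and the condition $\bar\delta_X(x,t)>t-1$ for all $t\geq 1$ and $x\in S_X$.

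For the sufficiency of the modulus condition I would adapt the proof of theorem \ref{GPanalyze}(2) to nets. First, since $\bar\delta_X\geq 0$, the hypothesis $\bar\delta_X(x,t)>t-1$ for $t\geq1$ automatically extends to all $t\geq0$. Given a non-zero $x$ and a bounded weakly null net $(x_\alpha)$, pass to a subnet $(x_\beta)$ along which $\|x_\beta\|$ converges to $\limsup_\alpha\|x_\alpha\|$, and set $s=\limsup_\alpha\|x_\alpha\|/\|x\|$. The net form of remark \ref{easyinequality}(1) then yields
$$\liminf_\beta\|x+x_\beta\|\geq \|x\|+\|x\|\,\bar\delta_X\!\left(\tfrac{x}{\|x\|},s\right)>\|x\|+\|x\|(s-1)=\limsup_\alpha\|x_\alpha\|,$$
and since $\liminf_\beta\|x+x_\beta\|\leq\limsup_\alpha\|x+x_\alpha\|$ for a subnet, property $(sO)$ follows.

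For the necessity I would argue by contraposition, assuming $X$ is infinite-dimensional and that $\bar\delta_X(x,t)\leq t-1$ for some $x\in S_X$ and some $t\geq1$. Here lemma \ref{AUM} is the crucial ingredient: it supplies a weakly null net $(x_\alpha)\subset S_X$ with $\lim_\alpha\|x+tx_\alpha\|-1=\bar\delta_X(x,t)\leq t-1$, so that $\lim_\alpha\|x+tx_\alpha\|\leq t$. Setting $y_\alpha=t x_\alpha$ produces a bounded weakly null net with $\|y_\alpha\|\equiv t$, hence $\limsup_\alpha\|y_\alpha\|=t\geq\limsup_\alpha\|x+y_\alpha\|$, which directly contradicts property $(sO)$ for the non-zero vector $x$. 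This shows $(sO)$ forces either finite-dimensionality or the modulus condition.

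The argument is short once lemma \ref{AUM} and remark \ref{easyinequality} are in hand, so I do not expect a serious obstacle; the only points demanding care are the net bookkeeping --- that passing to a subnet can only squeeze $\liminf$ and $\limsup$ inward, and that lemma \ref{AUM} genuinely produces a net attaining $\bar\delta_X(x,t)$ --- together with remembering that infinite-dimensionality is needed only in the necessity direction, precisely because that is where lemma \ref{AUM} is invoked.
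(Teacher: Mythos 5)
Your proof is correct and follows essentially the same route as the paper: the sufficiency is the net adaptation of theorem \ref{GPanalyze}(2) via remark \ref{easyinequality} (which the paper states but omits writing out), and the necessity is exactly the paper's application of lemma \ref{AUM} to the net attaining $\bar{\delta}_X(x,t)$, merely phrased contrapositively rather than directly. The only cosmetic difference is that you spell out the reduction of the modulus condition from $t\geq 1$ to all $t\geq 0$ and the subnet bookkeeping, both of which are handled correctly.
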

\begin{proof} Since every finite dimensional space has property $(sO)$, it is enough to assume that $X$ is infinite dimensional. 

In order to prove the necessity, assume  $X$ has property $(sO)$ and fix $x\in S_X$ and $t\geq 1$. By lemma \ref{AUM}, take a normalized weakly null net $(x_\alpha)$ such that $\bar{\delta}_X(x,t)=\lim_\alpha\lVert x+tx_\alpha\rVert-1$. Hence, we have 
$$\bar{\delta}_X(x,t)=\lim_\alpha\lVert x+tx_\alpha\rVert-1>\limsup_\alpha\lVert tx_\alpha\rVert-1=t-1.$$

Since the sufficiency can be proved by a slight modification of the proof of (2) of theorem \ref{GPanalyze}, we omit it.
\end{proof}

We observed in theorem \ref{GPanalyze} that if a Banach space $X$ satisfies $\bar{\delta}_X= \bar{\rho}_X$, then it has property $(M)$. We show that the converse holds for infinite dimensional weak-null-net stable Banach spaces. 
\begin{definition}\label{netstabledef}
A Banach space $X$ is said to be \textbf{net-stable} (resp. \textbf{weak-net-stable}) if, for any pair of bounded (resp. weakly null) nets $(x_\alpha)$ and $(y_\beta)$ in $X$, it satisfies that
$$\lim_\alpha\lim_\beta\lVert x_\alpha+y_\beta\rVert = \lim_\alpha\lim_\beta\lVert x_\alpha+y_\beta\rVert,$$
whenever both sides exist. 
\end{definition}
This is motivated by stable Banach spaces introduced by J.L. Krivine and B. Maurey \cite{KM}. Note that a separable Banach space $X$ is said to be \textbf{stable} if it satisfies the condition in \ref{netstabledef} for bounded sequences instead of bounded nets.

\begin{lemma}\label{AUM1}
For an infinite dimensional Banach space $X$ with property $(sM)$ and a weakly null net $(x_\alpha)\subset S_X$,
\begin{enumerate}
\item if $\bar{\delta}_X(x,t)=\lim_\alpha \|x+tx_\alpha\|-1$  for some $x\in S_X$, then
$$\bar{\delta}_X(t)=\bar{\delta}_X(z,t)=\lim_\alpha \|z+tx_\alpha\|-1 \text{~for every~} z\in S_X.$$ 
\item if $\bar{\rho}_X(x,t)=\lim_\beta \|x+tx_\alpha\|-1$ for some $x\in S_X$, then
$$\bar{\rho}_X(t)=\bar{\rho}_X(z,t)=\lim_\beta \|z+tx_\alpha\|-1 \text{~for every~} z\in S_X.$$ 
\end{enumerate}
\end{lemma}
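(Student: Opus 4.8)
The plan is to deduce everything from three ingredients already in the excerpt: the reformulation of property $(sM)$ in Lemma \ref{Meq}, the modulus inequalities of Remark \ref{easyinequality}, and the existence of norm-optimal weakly null nets from Lemma \ref{AUM}. I would prove (1) in full and then obtain (2) by the mirror-image argument (each inequality reversed, $\bar{\rho}_X$ and Remark \ref{easyinequality}(2) replacing $\bar{\delta}_X$ and Remark \ref{easyinequality}(1)).

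First I would use $(sM)$ to detach the limit from the particular vector $x$. The net $(tx_\alpha)$ is a bounded weakly null net, and by hypothesis $\lim_\alpha\|x+tx_\alpha\|$ exists and equals $1+\bar{\delta}_X(x,t)$. Since $\|z\|=\|x\|=1$ for every $z\in S_X$, Lemma \ref{Meq} guarantees that $\lim_\alpha\|z+tx_\alpha\|$ exists and equals $\lim_\alpha\|x+tx_\alpha\|=1+\bar{\delta}_X(x,t)$; in particular this value is independent of $z$. Write $c=\bar{\delta}_X(x,t)$ for this common number.

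Next comes the two-sided comparison with the modulus at an arbitrary $z$. The easy inequality is immediate: applying Remark \ref{easyinequality}(1) to $z$ and the net $(tx_\alpha)$, and using $\|tx_\alpha\|\equiv t$, gives $1+c=\lim_\alpha\|z+tx_\alpha\|\geq 1+\bar{\delta}_X(z,t)$, so $\bar{\delta}_X(z,t)\leq c$ for every $z$. The main work is the reverse inequality. Fixing $z$, I would invoke Lemma \ref{AUM} to choose a normalized weakly null net $(y_\gamma)$ realizing the modulus at $z$, i.e. $\lim_\gamma\|z+ty_\gamma\|=1+\bar{\delta}_X(z,t)$. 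Because this limit exists, Lemma \ref{Meq} transfers it to $x$, giving $\lim_\gamma\|x+ty_\gamma\|=1+\bar{\delta}_X(z,t)$; but Remark \ref{easyinequality}(1), now applied to $x$ and the net $(ty_\gamma)$, forces $\lim_\gamma\|x+ty_\gamma\|\geq 1+\bar{\delta}_X(x,t)=1+c$, whence $\bar{\delta}_X(z,t)\geq c$.

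Combining the two bounds yields $\bar{\delta}_X(z,t)=c=\bar{\delta}_X(x,t)$ for every $z\in S_X$, so $z\mapsto\bar{\delta}_X(z,t)$ is constant and therefore coincides with its infimum over $S_X$, which is $\bar{\delta}_X(t)$ by definition; this constant equals $c=\lim_\alpha\|z+tx_\alpha\|-1$ from the first step, which is the asserted triple equality. The only delicate point I anticipate is recognizing that $(sM)$ must be invoked twice in opposite directions — once on the given net, to push $x$-data out to every $z$, and once on a $z$-optimal net furnished by Lemma \ref{AUM}, to pull $z$-data back to $x$ — since the hypothesis privileges $x$ and a single use of Lemma \ref{Meq} only delivers the one-sided bound $\bar{\delta}_X(z,t)\leq c$; the symmetric use restores the balance needed for equality. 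Part (2) then follows verbatim after swapping $\bar{\delta}_X\leftrightarrow\bar{\rho}_X$, reversing each inequality, and using the smoothness half of Remark \ref{easyinequality} together with the $\bar{\rho}_X$-optimal net of Lemma \ref{AUM}, the resulting constant being the supremum $\bar{\rho}_X(t)$.
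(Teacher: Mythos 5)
Your proof is correct and follows essentially the same route as the paper's: use Lemma \ref{Meq} to transfer the limit along the given net from $x$ to an arbitrary $z$, apply Remark \ref{easyinequality} to get $\bar{\delta}_X(z,t)\leq\bar{\delta}_X(x,t)$, and then reverse the roles using a $z$-optimal net supplied by Lemma \ref{AUM} to get the opposite inequality. The paper's argument is just a more compressed version of exactly this two-sided comparison.
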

\begin{proof}
We only prove (1) since the rest can be shown by the same proof. 

Take arbitrary $z\in S_X$. From remark \ref{easyinequality}, lemma \ref{Meq} and property $(sM)$, we have 
$$\bar{\delta}_X(x,t)=\lim_\alpha \|x+tx_\alpha\|-1= \lim_\alpha \|z+tx_\alpha\|-1\geq \bar{\delta}_X(z,t).$$
From lemma \ref{AUM}, there exists a weakly null net $(z_\alpha)\subset S_X$ such that $\bar{\delta}_X(z,t)=\lim_\alpha \|z+tz_\alpha\|-1$. Hence, we see that  $\bar{\delta}_X(z,t)\geq \bar{\delta}_X(x,t)$ by the same argument above, and this shows $(1)$.
\end{proof}

\begin{lemma}\label{asymmetric}
For an infinite dimensional weak-net-stable Banach space $X$ with property $(sM)$, an element $x\in S_X$, a  weakly null net $(x_\alpha)\subset S_X$ and real numbers $a,b\in\mathbb{R}$, if $\lim_\alpha\lVert ax+bx_\alpha\rVert$ exists, then $\lim_\alpha\lVert bx+ax_\alpha\rVert$ exists and 
$$\lim_\alpha\lVert ax+bx_\alpha\rVert=\lim_\alpha\lVert bx+ax_\alpha\rVert.$$
\end{lemma}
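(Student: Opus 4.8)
The plan is to reduce everything to a single statement about one weakly null net: that the given limit $L := \lim_\alpha \lVert ax + b x_\alpha\rVert$ and the candidate value $\lim_\alpha \lVert bx + a x_\alpha\rVert$ must coincide, and to extract this equality from weak-net-stability applied to the net against an independent copy of itself. The guiding principle is that property $(sM)$, in the form of Lemma \ref{Meq}, makes the one-variable limit $\lim_\alpha \lVert y + b x_\alpha\rVert$ (for $y$ fixed and $(b x_\alpha)$ weakly null) depend only on $\lVert y\rVert$; this is exactly what lets us evaluate the iterated limits of the two-variable norm $\lVert a x_{\beta} + b x_{\alpha}\rVert$.

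First I would reduce to a convergent subnet. Since $(\lVert bx + a x_\alpha\rVert)_\alpha$ is bounded by $|a|+|b|$, it lives in a compact interval and hence converges to $L$ as soon as $L$ is its unique cluster point. So it suffices to take an arbitrary subnet $(x_{\alpha'})$ of $(x_\alpha)$ along which $\lim_{\alpha'}\lVert bx + a x_{\alpha'}\rVert =: M$ exists and to prove $M=L$. This $(x_{\alpha'})$ is again a weakly null net in $S_X$, it still satisfies $\lim_{\alpha'}\lVert ax + b x_{\alpha'}\rVert = L$, and by construction $\lim_{\alpha'}\lVert bx + a x_{\alpha'}\rVert = M$.

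The key step is to feed two independent copies of this single subnet into weak-net-stability. I would consider the weakly null nets $(a x_{\alpha'})_{\alpha'}$ and $(b x_{\beta'})_{\beta'}$, both indexed over the directed set underlying the subnet, and compute the two iterated limits of $\lVert a x_{\alpha'} + b x_{\beta'}\rVert$. For the inner limit over $\beta'$ with $\alpha'$ fixed, the vector $a x_{\alpha'}$ has norm $|a| = \lVert ax\rVert$, so Lemma \ref{Meq} together with $\lim_{\beta'}\lVert ax + b x_{\beta'}\rVert = L$ yields $\lim_{\beta'}\lVert a x_{\alpha'} + b x_{\beta'}\rVert = L$ for every $\alpha'$, whence $\lim_{\alpha'}\lim_{\beta'}\lVert a x_{\alpha'} + b x_{\beta'}\rVert = L$. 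Symmetrically, for the inner limit over $\alpha'$ with $\beta'$ fixed, the vector $b x_{\beta'}$ has norm $|b| = \lVert bx\rVert$, so Lemma \ref{Meq} together with $\lim_{\alpha'}\lVert bx + a x_{\alpha'}\rVert = M$ yields $\lim_{\alpha'}\lVert b x_{\beta'} + a x_{\alpha'}\rVert = M$, whence $\lim_{\beta'}\lim_{\alpha'}\lVert a x_{\alpha'} + b x_{\beta'}\rVert = M$. Both iterated limits now exist, so weak-net-stability (Definition \ref{netstabledef}) forces them to agree, i.e. $L = M$.

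The point requiring care, and the main obstacle I anticipate, is precisely the existence clause in Definition \ref{netstabledef}: if one tried to apply stability directly to the two nets $(x_\gamma)$ and $(x_\alpha)$, the reversed iterated limit would demand that $\lim_\gamma \lVert bx + a x_\gamma\rVert$ exist, which is essentially the conclusion itself, so the argument would be circular. Using one and the same convergent subnet for both indices is what breaks the circularity, since then both of the needed one-variable limits, $L$ and $M$, are available by construction and property $(sM)$ supplies the matching inner limits in each order. The final passage from the subnet back to the full net is then routine: we have shown every convergent subnet of the bounded net $(\lVert bx + a x_\alpha\rVert)_\alpha$ has limit $L$, so $L$ is its unique cluster point in the compact interval $[0,|a|+|b|]$, and therefore $\lim_\alpha \lVert bx + a x_\alpha\rVert$ exists and equals $L = \lim_\alpha \lVert ax + b x_\alpha\rVert$.
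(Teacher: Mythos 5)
Your proposal is correct and follows essentially the same route as the paper's proof: the paper also runs two copies of a single subnet of $(x_\alpha)$ through weak-net-stability, using Lemma \ref{Meq} to identify each inner limit, the only cosmetic difference being that the paper picks the subnets realizing $\limsup$ and $\liminf$ of $\lVert bx+ax_\alpha\rVert$ rather than an arbitrary convergent subnet.
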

\begin{proof}
We first show that $\lim_\alpha\lVert ax+bx_\alpha\rVert=\limsup_\alpha\lVert bx+ax_\alpha\rVert$. Indeed, take a subnet $(y_\beta)$ of $(x_\alpha)$  such that $\lim_\beta\lVert bx+ay_\beta\rVert=\limsup_\alpha\lVert bx+ax_\alpha\rVert$, and let $(y_\eta)$ be the same net with $(y_\beta)$.
From lemma \ref{Meq}, we have
\begin{align*}
\lim_\alpha\lVert ax+bx_\alpha\rVert
&=\lim_\beta\lVert ax+by_\beta\rVert=\lim_\eta\lim_\beta\lVert ay_\eta+by_\beta\rVert\\
&=\lim_\beta\lim_\eta\lVert ay_\eta+by_\beta\rVert=\lim_\eta\lVert bx+ay_\eta\rVert=\limsup_\alpha\lVert bx+ax_\alpha\rVert
\end{align*}
Similarly, we see $\lim_\alpha\lVert ax+bx_\alpha\rVert=\liminf_\alpha\lVert bx+ax_\alpha\rVert$. Hence, we finish the proof.
\end{proof}

\begin{theorem}\label{Msym}
If an infinite dimensional weak-net-stable Banach space $X$ has property $(sM)$, then $\bar{\delta}_X=\bar{\rho}_X$. In particular, every infinite dimensional net-stable Banach space $X$ with property $(sM)$ satisfies $\bar{\delta}_X=\bar{\rho}_X$.
\end{theorem}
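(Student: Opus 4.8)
The plan is to prove $\bar{\delta}_X(t)=\bar{\rho}_X(t)$ for every $t>0$ by comparing a net that realizes the convexity modulus with one that realizes the smoothness modulus, and forcing the two associated two-variable ``limit functions'' to coincide. Since $\bar{\delta}_X\leq\bar{\rho}_X$ always holds, only the reverse inequality is at stake, but the argument below produces equality directly.

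First I would fix $t>0$ and, using lemma \ref{AUM} together with lemma \ref{AUM1}, produce two weakly null nets $(x_\alpha),(y_\beta)\subset S_X$ with the direction-free realizations $\lim_\alpha\|z+tx_\alpha\|=1+\bar{\delta}_X(t)$ and $\lim_\beta\|z+ty_\beta\|=1+\bar{\rho}_X(t)$ for every $z\in S_X$; here property $(sM)$ (through lemma \ref{Meq}) is what makes the limits independent of the direction $z$, while lemma \ref{AUM1} upgrades $\bar{\delta}_X(z,t)$ and $\bar{\rho}_X(z,t)$ to the global moduli. Passing to subnets, and using that $r\mapsto\|z+r(\cdot)\|$ is $1$-Lipschitz so that convergence on a countable dense set of scalars $r$ propagates to all $r\geq 0$, I may assume that $\lim_\alpha\|z+rx_\alpha\|$ and $\lim_\beta\|z+ry_\beta\|$ exist for all $z\in S_X$ and all $r\geq 0$; these subnets still realize the moduli at the parameter $t$.

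Next I would introduce, for $s,t>0$, the quantities $g_\delta(s,t)=\lim_\alpha\|sz+tx_\alpha\|$ and $g_\rho(s,t)=\lim_\beta\|sz+ty_\beta\|$, which by lemma \ref{Meq} do not depend on the choice of $z\in S_X$. The heart of the proof is a ``cross'' application of weak-net-stability to the bounded weakly null nets $(sx_\alpha)$ and $(ty_\beta)$: computing the inner limit over $\beta$ via $(sM)$ gives $g_\rho(s,t)$, while computing it over $\alpha$ gives $g_\delta(t,s)$, so weak-net-stability yields $g_\rho(s,t)=g_\delta(t,s)$. Then I would invoke lemma \ref{asymmetric} separately for each net to obtain the symmetries $g_\rho(s,t)=g_\rho(t,s)$ and $g_\delta(s,t)=g_\delta(t,s)$. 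Combining these, $g_\rho(s,t)=g_\delta(t,s)=g_\delta(s,t)$; specializing to $s=1$ gives $1+\bar{\rho}_X(t)=g_\rho(1,t)=g_\delta(1,t)=1+\bar{\delta}_X(t)$, which is the asserted equality. The final ``in particular'' follows because every net used lies in $S_X$ and is therefore bounded, so net-stability supplies exactly the stability identity invoked above.

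The step I expect to be most delicate is the bookkeeping around existence of the iterated limits while preserving the realizing property of the nets: one must pass to subnets so that all the relevant limits exist, verify that these subnets still attain $\bar{\delta}_X(t)$ and $\bar{\rho}_X(t)$, and check that the inner limits in the weak-net-stability identity are genuinely independent of the outer index. This last point is precisely where property $(sM)$ (lemma \ref{Meq}) enters, and where the norm identities $\|sx_\alpha\|=s$ and $\|ty_\beta\|=t$ must be matched against reference vectors $sz$ and $tz$. Once this is in place, the symmetry of each individual net, combined with the cross-stability identity, collapses $g_\rho$ and $g_\delta$ onto one another essentially for free.
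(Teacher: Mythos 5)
Your argument is correct and is essentially the paper's own proof: both realize the two moduli by weakly null nets in $S_X$ via lemma \ref{AUM} (and \ref{AUM1}), use lemma \ref{Meq} to make the inner limits independent of the outer index, apply weak-net-stability to interchange the iterated limits, and finish with lemma \ref{asymmetric} to swap the coefficients. Your extra generality in the parameter $s$ and the subnet bookkeeping over all $r\geq 0$ are harmless but unnecessary, since the paper only needs the case $s=1$ and the realizing nets already have the required limits.
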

\begin{proof}
 Fix arbitrary $t>0$ and $x\in S_X$, and take weakly null nets $(\eta_\alpha)\subset S_X$ and $(\xi_\beta)\subset S_X$ such that $\bar{\delta}_X(x,t)+1=\lim_\alpha\lVert x+t\eta_\alpha\rVert$ and $\bar{\rho}_X(x,t)+1=\lim_\beta\lVert x+t\xi_\beta\rVert$ by lemma \ref{AUM}. From lemmas \ref{Meq}, \ref{AUM1} and \ref{asymmetric}, we see that
\begin{align*}
\bar{\rho}_X(t)+1&=\bar{\rho}_X(x,t)+1\\
&=\lim_\beta\lVert x+t\xi_\beta\rVert=\lim_\alpha\lim_\beta\lVert \eta_\alpha+t\xi_\beta\rVert\\
&=\lim_\beta\lim_\alpha\lVert t\xi_\beta+\eta_\alpha\rVert=\lim_\alpha\lVert tx+\eta_\alpha\rVert \\
&=\lim_\alpha\lVert x+t\eta_\alpha\rVert=\bar{\delta}_X(x,t)+1=\bar{\delta}_X(t)+1.
\end{align*}
\end{proof}

The converse of theorem \ref{Msym} holds if a norm induced by $\bar{\rho}_X$ is symmetric. For an infinite dimensional Banach space $X$, $\overline{\rho}_X$ is a $1$-Lipschitz convex function such that $\overline{\rho}_X(t)/t$ converges to $1$ as $t$ goes to infinity. Hence, it  induces an norm $N^{A}_X(a,b)$ on $\mathbb{R}^2$ (see \cite{K2}).
 $$N^{A}_X(a,b)=\begin{cases}\lvert a\rvert+\lvert a\rvert\bar{\rho}_X\left(\left\lvert \frac{b}{a}\right\rvert\right) &\text{if }a\neq0\\ \lvert b\rvert &\text{if }a=0\end{cases}$$ 
 It is worth to note that if $\bar{\rho}_X(t)\geq t-1$ for every $t\geq 1$, then $N^A_X$ is a lattice norm that means $N_X^A(a,b)\leq N_X^A(c,d)$ whenever $a\leq c$ and $b\leq d$ (see \cite[Lemma 2.3]{GP}). If $X$ has property $(sM)$, then we have $\limsup_\alpha\lVert x_\alpha\rVert\leq\limsup_\alpha\lVert x+x_\alpha\rVert$  for any weakly null net $(x_\alpha)$ and any element $x\in X$. Therefore, lemma \ref{AUM} implies $\bar{\rho}_X(t)\geq t-1$ for every $t\geq 1$, and thus $N^A_X$ is a lattice norm.  We consider a specific condition for $N^A_X$, the symmetricity.

\begin{definition} For a norm $N$ on $\mathbb{R}^2$, we say that $N$ is \textbf{symmetric} if $N(a,b)=N(b,a)$ for every $(a,b)\in \mathbb{R}^2$.
\end{definition}
The typical example of Banach space $X$ such that $N^A_X$ is symmetric is $\ell_p$ ($1\leq p<\infty$) since $\bar{\rho}_{\ell_p}(t)=(1+t^p)^{1/p}-1$. We note that if $X$ is an infinite dimensional weak-net-stable Banach space with property $(sM)$, then $N^A_X$ is symmetric as a consequence of remark \ref{easyinequality}, lemmas \ref{asymmetric} and \ref{AUM} and theorem \ref{Msym}. 

\begin{theorem}\label{A}
For an infinite dimensional Banach space $X$ such that $N^A_X$ is symmetric, $X$ is weak-net-stable and has property $(sM)$ if and only if $\bar{\delta}_X=\bar{\rho}_X$.
\begin{proof}
From theorem \ref{Msym}, we only need to prove the sufficiency. Indeed, assume $\bar{\delta}_X=\bar{\rho}_X$. Since $X$ has $(sM)$ by theorem \ref{GPanalyze}, we prove $X$ is weak-net-stable . Take a pair of bounded weakly null nets $(\eta_\alpha)$ and $(\xi_\beta)$ such that both limits $\lim_\alpha\lim_\beta\lVert\eta_\alpha+\xi_\beta\rVert$ and $\lim_\beta\lim_\alpha\lVert\eta_\alpha+\xi_\beta\rVert$ exist. By passing to suitable subnets, it is enough to assume that $\lim_\alpha\left\lVert \eta_\alpha\right\rVert$ and 
$\lim_\beta\left\lVert\xi_\beta\right\rVert$ exist.
Since it holds for any $x\in S_X$ and non-zero $\eta_\alpha$ that
$$
\|\eta_\alpha\|+\|\eta_\alpha\| \bar{\delta}_X\left(\frac{\lim_\beta\|\xi_\beta\|}{\|\eta_\alpha\|}\right)
\leq \lim_\beta \left\|\|\eta_\alpha\| x+\xi_\beta\right\| \leq 
\|\eta_\alpha\|+\|\eta_\alpha\| \bar{\rho}_X\left(\frac{\lim_\beta \|\xi_\beta\|}{\|\eta_\alpha\|}\right),
$$
we get that
$$\lim_\beta \left\|\|\eta_\alpha\| x+\xi_\beta\right\| = N^A_X\left(\|\eta_\alpha\|, \lim_\beta \|\xi_\beta\|\right) \text{~for every~} x\in S_X.$$
 Similarly, we have that
$$\lim_\alpha \left\|\|\xi_\beta\|x+\eta_\alpha\right\| = N^A_X\left(\|\xi_\beta\|, \lim_\alpha \|\eta_\alpha\|\right) \text{~for every~} x\in S_X.$$ 
Hence, we deduce that 

\begin{align*}
\lim_\alpha\lim_\beta\left\lVert\eta_\alpha+\xi_\beta\right\rVert
&=\lim_\alpha\lim_\beta\left\lVert\left\lVert\eta_\alpha\right\rVert x+\xi_\beta\right\rVert
=N^{A}_X\left(\lim_\alpha\left\lVert \eta_\alpha\right\rVert,\lim_\beta\left\lVert\xi_\beta\right\rVert\right)\\
&=N^{A}_X\left(\lim_\beta\left\lVert\xi_\beta\right\rVert,\lim_\alpha\left\lVert \eta_\alpha\right\rVert\right)
=\lim_\beta\lim_\alpha \left\|\|\xi_\beta\|x+\eta_\alpha\right\|\\
&=\lim_\beta\lim_\alpha\left\lVert\eta_\alpha+\xi_\beta\right\rVert.
\end{align*}
\end{proof}
\end{theorem}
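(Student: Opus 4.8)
The plan is to dispose of the forward implication immediately and then concentrate on the converse, where the symmetry hypothesis on $N^A_X$ carries the real weight. The implication that weak-net-stability together with property $(sM)$ forces $\bar{\delta}_X=\bar{\rho}_X$ is exactly Theorem \ref{Msym}, so nothing new is required in that direction. For the converse I would assume $\bar{\delta}_X=\bar{\rho}_X$ and first record that $X$ has property $(sM)$: since $\bar{\delta}_X\geq\bar{\rho}_X$ holds trivially, the net analogue of Theorem \ref{GPanalyze}(1) (whose argument uses only Remark \ref{easyinequality}, which is already formulated for nets) delivers property $(sM)$ at once. The substance is therefore to establish weak-net-stability.

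So I would take two bounded weakly null nets $(\eta_\alpha)$ and $(\xi_\beta)$ for which both iterated limits $\lim_\alpha\lim_\beta\|\eta_\alpha+\xi_\beta\|$ and $\lim_\beta\lim_\alpha\|\eta_\alpha+\xi_\beta\|$ exist, and pass to subnets—still weakly null, being subnets of bounded weakly null nets—so that the scalar limits $a:=\lim_\alpha\|\eta_\alpha\|$ and $b:=\lim_\beta\|\xi_\beta\|$ both exist. The crux is a direction-independence statement: for each fixed $\alpha$ with $\eta_\alpha\neq 0$ and every $x\in S_X$,
$$\lim_\beta\bigl\|\,\|\eta_\alpha\|\,x+\xi_\beta\bigr\| = N^A_X\bigl(\|\eta_\alpha\|,\,b\bigr).$$
This is where $\bar{\delta}_X=\bar{\rho}_X$ enters decisively: in the ``in particular'' part of Remark \ref{easyinequality}, applied to the scaled unit vector $z=\|\eta_\alpha\|x$, the lower bound built from $\bar{\delta}_X$ and the upper bound built from $\bar{\rho}_X$ collapse to the common value $N^A_X(\|\eta_\alpha\|,b)$, so the limit exists and is independent of $x$. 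Property $(sM)$, in the form of Lemma \ref{Meq}, then lets me replace the representative $\|\eta_\alpha\|x$ by $\eta_\alpha$ itself, since $\|\eta_\alpha\|=\|\,\|\eta_\alpha\|\,x\|$, yielding $\lim_\beta\|\eta_\alpha+\xi_\beta\|=N^A_X(\|\eta_\alpha\|,b)$ (the degenerate case $\|\eta_\alpha\|=0$ being trivial from the second branch of the formula for $N^A_X$).

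With this in hand the computation closes by continuity and symmetry. Letting $\alpha$ run and using that $N^A_X$ is a jointly continuous norm on $\mathbb{R}^2$,
$$\lim_\alpha\lim_\beta\|\eta_\alpha+\xi_\beta\| = \lim_\alpha N^A_X\bigl(\|\eta_\alpha\|,b\bigr) = N^A_X(a,b),$$
and exchanging the roles of the two nets gives $\lim_\beta\lim_\alpha\|\eta_\alpha+\xi_\beta\|=N^A_X(b,a)$. The hypothesis that $N^A_X$ is symmetric then forces $N^A_X(a,b)=N^A_X(b,a)$, so the two iterated limits agree and $X$ is weak-net-stable.

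The main obstacle, I expect, is precisely the direction-independence step: one must use $\bar{\delta}_X=\bar{\rho}_X$ to turn the two-sided modulus estimates of Remark \ref{easyinequality} into an exact evaluation, and then invoke Lemma \ref{Meq} to transfer from an arbitrary norming representative $\|\eta_\alpha\|x$ to the actual term $\eta_\alpha$. The remaining delicacies are bookkeeping—ensuring that the successive subnet extractions simultaneously preserve weak nullity and the existence of all the relevant scalar limits—and are routine once the key evaluation is secured.
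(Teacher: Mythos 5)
Your proposal is correct and follows essentially the same route as the paper: property $(sM)$ from the net version of Theorem \ref{GPanalyze}(1), the exact evaluation $\lim_\beta\bigl\|\,\lVert\eta_\alpha\rVert x+\xi_\beta\bigr\|=N^A_X(\lVert\eta_\alpha\rVert,\lim_\beta\lVert\xi_\beta\rVert)$ via the collapsing bounds of Remark \ref{easyinequality} under $\bar{\delta}_X=\bar{\rho}_X$, the transfer to $\eta_\alpha$ itself via Lemma \ref{Meq}, and the final symmetry of $N^A_X$ to identify the two iterated limits. If anything, you are slightly more explicit than the paper about the two points it leaves tacit, namely the appeal to Lemma \ref{Meq} for replacing $\lVert\eta_\alpha\rVert x$ by $\eta_\alpha$ and the continuity of $N^A_X$ in the outer limit.
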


According to the aforementioned results of N. J. Kalton in \cite{K1} and E. Oja in \cite{Oja1}, whenever a Banach space $X$ satisfies that $\mathcal{K}(X)$ is an M-ideal in $\mathcal{L}(X)$, $X$ has property $(M_p)$ for some $1<p<\infty$ if and only if there exists $1 < p <\infty$ such that $X$ has property $(m_p)$. Especially, whenever $X$ is separable, these conditions are equivalent to the stability of $X$. We observe that the same characterization holds with the net-stability for non-separable spaces as follows.

\begin{cor}\label{Mpeq}  For a Banach space $X$ such that $\mathcal{K}(X)$ is an M-ideal in $\mathcal{L}(X)$, the following are equivalent. 
\begin{enumerate}[(i)]
\item $X$ has property $(M_p)$ for some $1 < p < \infty$.
\item $X$ is net-stable.
\item $X$ is reflexive and weak-net-stable.
\item $X$ is finite dimensional, or $X$ is reflexive and satisfies $\bar{\delta}_X=\bar{\rho}_X$.
\item There exists $1 < p <\infty$ such that $X$ has property $(m_p)$.
\end{enumerate}
\end{cor}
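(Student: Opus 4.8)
The plan is to treat the finite-dimensional case separately---there every condition holds trivially, since weakly null nets are norm null and bounded nets subconverge---and to assume throughout that $X$ is infinite dimensional. The standing hypothesis that $\mathcal{K}(X)$ is an M-ideal will be used in two ways: it forces $X$ to have property $(sM)$ (the net property that, together with the MCAP, characterizes the M-ideal condition; see \cite{HWW}), so that Theorems \ref{Msym} and \ref{A} apply, and it forces $X$ to contain no isomorphic copy of $\ell_1$, so that Corollary \ref{asyp} applies. I would then close the loop through the scheme $(iv)\Rightarrow(v)\Rightarrow(ii)\Rightarrow(iv)$, together with the separate equivalences $(iii)\Leftrightarrow(iv)$ and $(i)\Leftrightarrow(v)$.

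First I would dispose of the implications that follow directly from the machinery of Section \ref{section5}. The equivalence $(i)\Leftrightarrow(v)$ is exactly the theorem of Kalton and Oja (\cite{K1,Oja1}) quoted just before the statement. For $(iii)\Rightarrow(iv)$: with property $(sM)$ in hand, weak-net-stability gives $\bar{\delta}_X=\bar{\rho}_X$ by Theorem \ref{Msym}, and reflexivity is carried from $(iii)$. For $(iv)\Rightarrow(iii)$ and $(iv)\Rightarrow(v)$: reflexivity excludes $\ell_1$, so Corollary \ref{asyp} turns $\bar{\delta}_X=\bar{\rho}_X$ into property $(m_p)$ for some $1<p<\infty$; in particular $\bar{\rho}_X(t)=\|(1,t)\|_p-1$, whence $N^A_X=\|\cdot\|_p$ is symmetric and Theorem \ref{A} returns weak-net-stability, which together with reflexivity is $(iii)$, while $(m_p)$ itself is $(v)$.

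The two implications carrying genuine content are $(v)\Rightarrow(ii)$ and $(ii)\Rightarrow(iv)$, and both reduce to producing \emph{reflexivity}. Granting reflexivity in $(v)$, net-stability would follow by reducing arbitrary bounded nets to the weakly null case: by reflexivity any bounded net has a weakly convergent subnet, so writing $x_\alpha=u+w_\alpha$ and $y_\beta=v+z_\beta$ with $(w_\alpha),(z_\beta)$ weakly null, the iterated limit $\lim_\alpha\lim_\beta\|x_\alpha+y_\beta\|$ collapses, via Remark \ref{easyinequality} and property $(m_p)$, to $N^A_X(N^A_X(\|u+v\|,\sigma),\rho)=\|(\|u+v\|,\sigma,\rho)\|_p$ with $\sigma=\lim_\alpha\|w_\alpha\|$ and $\rho=\lim_\beta\|z_\beta\|$; the nesting and symmetry of the $\ell_p$-norm make this independent of the order, giving $(ii)$. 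For $(ii)\Rightarrow(iv)$: Theorem \ref{Msym} yields $\bar{\delta}_X=\bar{\rho}_X$, and since net-stability passes to subspaces and implies sequential stability in the sense of \cite{KM}, no subspace of $X$ is isomorphic to $c_0$; hence in Corollary \ref{asyp} the exponent satisfies $1<p<\infty$, and once reflexivity is established we land in $(iv)$.

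The main obstacle is therefore a single reflexivity lemma: an infinite-dimensional space with $\bar{\delta}_X=\bar{\rho}_X=\|(1,\cdot)\|_p-1$ for some $1<p<\infty$ (equivalently, with property $(m_p)$, $1<p<\infty$) is reflexive. Here the restriction $1<p<\infty$ is essential, as $\ell_1$ and $c_0$ show the two endpoints fail, and the natural route is to read off from the moduli that $X$ is simultaneously asymptotically uniformly convex ($\bar{\delta}_X(t)>0$ for $t>0$) and asymptotically uniformly smooth ($\bar{\rho}_X(t)/t\to0$), and then to invoke the fact that a Banach space with both properties is reflexive (via the Szlenk indices of $X$ and $X^*$, or by a James-type extraction of a weakly null sequence that would violate the lower $\ell_p$-estimate). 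I expect essentially all the difficulty of the corollary to be concentrated in this step; once it is available the web above closes and all five conditions are equivalent.
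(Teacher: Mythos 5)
Your web of implications is mostly sound at the periphery, but the load-bearing step --- what you yourself call the ``reflexivity lemma'' --- contains a genuine gap, and it is exactly the step the paper handles by a completely different mechanism. You assert that having property $(m_p)$ for some $1<p<\infty$ is \emph{equivalent} to $\bar{\delta}_X=\bar{\rho}_X=\|(1,\cdot)\|_p-1$, and you use the moduli form to extract reflexivity (via ``AUC and AUS imply reflexive''). Only one direction of that equivalence is available: corollary \ref{asyp} goes from the moduli identity to $(m_p)$, not back. Property $(m_p)$ is a statement about weakly null \emph{sequences}, while $\bar{\delta}_X$ and $\bar{\rho}_X$ are defined through finite-codimensional subspaces, equivalently through weakly null \emph{nets} (lemma \ref{AUM}); a realizing net indexed by finite subsets of $X^*$ has no weakly null cofinal subsequence in a non-separable space, so $(m_p)$ gives you no upper bound on $\bar{\rho}_X$ and no lower bound on $\bar{\delta}_X$ --- remark \ref{easyinequality} applied to a single weakly null sequence yields only $\bar{\delta}_X\le\|(1,\cdot)\|_p-1\le\bar{\rho}_X$, the wrong inequalities. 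The equivalence is in fact false in general: $\ell_1$ has $(m_p)$ for every $p$ vacuously (weakly null sequences are norm null) yet $\bar{\delta}_{\ell_1}(t)=\bar{\rho}_{\ell_1}(t)=t$. Since your cycle needs $(v)\Rightarrow(ii)$, and that implication is made to rest on reflexivity obtained from $(v)$ through this non-implication, the chain does not close. (Your route out of $(ii)$ is legitimate up to a point --- theorem \ref{Msym} does give $\bar{\delta}_X=\bar{\rho}_X$ there --- but it still terminates in the unproved Milman-type theorem that an AUC and AUS space is reflexive, which is a true classical result but is nowhere proved or cited in the paper, and your one-line gesture at Szlenk indices does not discharge it.)

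The paper's proof avoids all of this. Its cycle is $(i)\Rightarrow(ii)\Rightarrow(iii)\Rightarrow(iv)\Rightarrow(v)\Rightarrow(i)$, with $(i)\Rightarrow(ii)$ a net-version of \cite[Proposition 2.5]{OW}, $(iii)\Rightarrow(iv)$ from theorem \ref{Msym}, $(iv)\Rightarrow(v)$ from corollary \ref{asyp}, and $(v)\Rightarrow(i)$ from \cite{Oja1}. Reflexivity is produced once, in $(ii)\Rightarrow(iii)$, by separable determination rather than by moduli: every separable subspace of $X$ sits inside a separable subspace $Z$ with the MCAP (using that $X$ has the MCAP), $\mathcal{K}(Z)$ is again an M-ideal by \cite{Oja1}, $Z$ is stable because $X$ is net-stable, hence $Z$ has property $(M_p)$ for some $1<p<\infty$ by Kalton \cite{K1}, hence $Z$ is reflexive by \cite[Proposition 5.2, Ch.~VI]{HWW}; therefore $X$ is reflexive. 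If you want to salvage your scheme, the cleanest repair is to replace your reflexivity lemma by this argument (or, at node $(v)$, to pass first to $(i)$ via \cite{Oja1} and quote the known fact that property $(M_p)$ with $1<p<\infty$ forces reflexivity), rather than trying to read reflexivity off the asymptotic moduli from a sequential hypothesis.
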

\begin{proof} Since every finite dimensional space satisfies all the conditions clearly, it is enough to assume that $X$ is infinite dimensional.

 E. Oja and D. Werner proved that if a separable Banach space $X$ have property $(M_p)$, then $X$ is stable (see \cite[Proposition 2.5]{OW}). Since the proof of the implication $(i)~\Rightarrow (ii)$ is just a simple modification of the one of this, we omit it. The only difference is that we need to use nets instead of sequences. The implications $(v)~\Rightarrow (i)$, $(iv)~\Rightarrow (v)$ and $(iii)~\Rightarrow (iv)$ are shown in \cite[Corollary 6]{Oja1}, corollary \ref{asyp} and  theorem \ref{Msym} respectively.

 In order to see $(ii)~\Rightarrow (iii)$, we only need to prove that $X$ is reflexive. Assume that $X$ is net-stable, and note that every separable subspace of $X$ is stable. Since $X$ has the MCAP (see \cite[Theorem 2]{Oja1}), for any separable subspace $Y$ of $X$ there exists a separable subspace $Z$ of $X$ containing $Y$ having the MCAP (see \cite[P606]{S} or \cite[Theorem 4.1]{OZ}). We also see that $\mathcal{K}(Z)$ is an M-ideal in $\mathcal{L}(Z)$ from \cite[Theorem 2]{Oja1} again. Since $Z$ is stable, it has property $M_p$ for some $1<p<\infty$ (see \cite{K1}). Hence, $Z$ is reflexive by \cite[Proposition 5.2, Ch. VI]{HWW}. This shows that  every separable subspace of $X$ is reflexive, and so is $X$.
 \end{proof}

\subsection*{Funding}
The first and second author were supported by the National Research Foundation of Korea(NRF) grant funded by the Korea government (MSIT) [NRF-2020R1C1C1A01012267].

\end{document}